\documentclass{amsart}
\usepackage[T1]{fontenc}
\usepackage{amsmath,a4wide}
\usepackage{amssymb}
\usepackage{amscd}
\usepackage{epsfig}
\usepackage[T1]{fontenc}
\usepackage{epsfig}
\usepackage{amsmath}
\usepackage{graphicx}
\usepackage{subfigure}
\usepackage{mathrsfs}
\usepackage{epstopdf}
\usepackage{color}
\usepackage{booktabs}
\usepackage{ifpdf}
\usepackage{cite}
\usepackage{amsthm}
\usepackage{amsmath}
\definecolor{myblue}{rgb}{0.2,0,0.9}
\definecolor{blue-violet}{rgb}{0.54, 0.17, 0.89}
\usepackage{enumitem}
\usepackage{algorithm,algorithmic}

\usepackage{pgfplots}
\usepgfplotslibrary{groupplots}
\pgfplotsset{compat=1.12}

\usepackage{tikz}
\usetikzlibrary{arrows,intersections}
\usepackage{boldline}
\usepackage{multirow}
\usepackage[margin=1.32in]{geometry}

\usepackage{varioref}
\usepackage{xr-hyper}
\usepackage{hyperref}
\hypersetup{
	colorlinks,linkcolor=blue,citecolor=blue,filecolor=red,urlcolor=blue}

\def\hatotimes{\mathbin{\hat \otimes}}

\definecolor{myblue}{rgb}{0.2,0,0.9}
\definecolor{blue_violet}{rgb}{0.54, 0.17, 0.89}
\definecolor{darkgreen}{rgb}{0,0.35,0}

\allowdisplaybreaks

\makeatletter
\DeclareRobustCommand*\cal{\@fontswitch\relax\mathcal}
\makeatother

\newtheorem{theorem}{Theorem}[section]
\newtheorem{proposition}[theorem]{Proposition}

\newtheorem{cor}[theorem]{Corollary}
\newtheorem{lemma}[theorem]{Lemma}
\numberwithin{equation}{section}

\theoremstyle{definition}
\newtheorem{remark}[theorem]{Remark}
\newtheorem{definition}[theorem]{Definition}
\newtheorem{assumption}[theorem]{Assumption}
\newtheorem{framework}[theorem]{Framework}

\usepackage{xparse}
\makeatletter
\RenewDocumentCommand{\title}{om}{%
	\IfNoValueTF{#1}
	{\gdef\shorttitle{}}
	{\gdef\shorttitle{#1}}%
	\gdef\@title{#2}%
}
\makeatother

\makeatletter
\def\@tocline#1#2#3#4#5#6#7{\relax
	\ifnum #1>\c@tocdepth %
	\else
	\par \addpenalty\@secpenalty\addvspace{#2}%
	\begingroup \hyphenpenalty\@M
	\@ifempty{#4}{%
		\@tempdima\csname r@tocindent\number#1\endcsname\relax
	}{%
		\@tempdima#4\relax
	}%
	\parindent\z@ \leftskip#3\relax \advance\leftskip\@tempdima\relax
	\rightskip\@pnumwidth plus4em \parfillskip-\@pnumwidth
	#5\leavevmode\hskip-\@tempdima
	\ifcase #1
	\or\or \hskip 2em \or \hskip 2em \else \hskip 3em \fi%
	#6\nobreak\relax
	\hfill\hbox to\@pnumwidth{\@tocpagenum{#7}}\par%
	\nobreak
	\endgroup
	\fi}
\makeatother

\title{Robust $Q$-learning for mean-field control under Wasserstein uncertainty in common noise}
\author{Mathieu Lauri{\`e}re}
\address{Shanghai Center for Data Science; NYU-ECNU Institute of Mathematical Sciences, NYU Shanghai}
\email{mathieu.lauriere@nyu.edu}

\author{Ariel Neufeld}
\address{Division of Mathematical Sciences, Nanyang Technological University}
\email{ariel.neufeld@ntu.edu.sg}

\author{Kyunghyun Park}
\address{Division of Mathematical Sciences, Nanyang Technological University}
\email{kyunghyun.park@ntu.edu.sg}

\thanks{\textit{Key words:} $Q$-learning, mean\;field control, common noise, Wasserstein uncertainty, robust optimization, dynamic programming, stochastic approximation.
}
\thanks{\textit{Funding:} M.\;Lauri{\`e}re acknowledges the support of NYU Shanghai HPC for the numerical experiments. A.\;Neufeld gratefully acknowledges support by the MOE AcRF Tier 1 Grant   RG109/25. K.\;Park
acknowledges the support of the National Research Foundation of Korea (grant DOI: RS-2025-02633175).}

\date{\today.}

\begin{document}

\begin{abstract}
	In this article, we present a robust $Q$-learning algorithm for discrete-time mean-field control problems under Wasserstein uncertainty in the common noise law.
	The~algorithm combines a quantization-and-projection scheme with a Wasserstein dual reformulation on the common-noise space.
	We establish its convergence together with finite-time iteration bounds for both synchronous and asynchronous learning schemes.
	Numerical experiments on systemic risk and epidemic models compare the asynchronous implementation with an idealized Bellman iteration, illustrate the robustness--performance tradeoff under common-noise misspecification, and report the observed convergence behavior of the asynchronous $Q$-learning algorithm.
\end{abstract}

\maketitle

\section{Introduction}\label{sec:intro}
Mean-field control (MFC) \cite{bensoussan2013mean,carmona2018probabilistic,pham2017dynamic}, also referred to as McKean-Vlasov control, has become an exciting source of progress in the study of dynamic stochastic systems with large-population of cooperative agents. These agents take the same reward and transition functions, while being influenced by the states and actions of other agents through their empirical distributions. Under the notion of {mean-field interaction} \cite{lasry2007mean,Huang2006nashCertainty}, MFC problems admit a tractable reformulation: rather than analyzing a high-dimensional system of explicitly coupled agents, one solves a representative agent's optimization problem in which the coupling is reduced to the evolving population distribution. This paradigm has grown into an active interdisciplinary field attracting theoretically inclined mathematicians as well as engineers and social scientists.

Realistic engineering and economic applications often require a source of randomness that is common to all agents. Indeed, demand fluctuations in traffic systems, environmental disturbances in robotics, and liquidity shocks in financial markets provide  convincing evidence for the importance of incorporating common randomness on top of the idiosyncratic randomness affecting only individual agents (e.g., \cite{fouque2015mean,elamvazhuthi2019mean,gu2021mean,bauso2016opinion}).
The introduction of {\it common noise} substantially increases both the modeling and analytical complexity of MFC problems, since the resulting mean-field interaction itself becomes stochastic, yet its incorporation remains essential for capturing aggregate randomness and systemic effects arising in realistic large-population systems. This has led to a recent surge of works on MFC problems with common noise \cite{carmona2023model,motte2022mean,djete2022extended,djete2022mckean,pham2017dynamic,motte2023quantitative}.

Notably, two major challenges arise when implementing MFC problems with common noise in practice.
The first is that analytical solutions of MFC problems are rarely available, while the corresponding numerical methods typically suffer from high dimensionality. In particular, MFC problems with common noise can be formulated as stochastic control problems driven by stochastic flows of probability measures representing mean-field interaction dynamics,
and the corresponding optimality conditions take the form of Bellman equations or forward-backward equations defined on spaces of probability measures
(see, e.g., \cite{motte2022mean,carmona2023model,djete2022mckean,djete2022mckeanAOP}). Such equations are therefore intrinsically infinite-dimensional, making them analytically intractable and numerically challenging to solve.

The second is that the true law of the common noise is unknown. Model misspecification and estimation errors are thus unavoidable, particularly when the common-noise law must be inferred from limited or non-stationary observations of population measure flows. Crucially, such uncertainty is apparent in MFC problems with common noise, since the dynamics of the probability measure flow are themselves adapted to the common noise. Consequently, misspecification of the common-noise law may lead to inaccurate stochastic control formulations and unreliable solutions.

This article aims to address the aforementioned challenges by developing a robust $Q$-learning algorithm for MFC problems under common-noise uncertainty. $Q$-learning and its variants \cite{watkins1992q}, which apply stochastic approximation \cite{robbins1951stochastic} based on the Bellman optimality condition, are among the most widely used reinforcement learning paradigms. Robust $Q$-learning \cite{blanchet2023double,panaganti2022robust,zhang2020robust,roy2017reinforcement,liu2022distributionally,
	lu2025distributionally,
	neufeld2024robust,sester2024q}, a recent advancement of this paradigm, incorporates distributional robustness into the learning objective to address uncertainty and distributional shifts arising from limited real data. Building on our companion article \cite{lauriere2025robust} in which a theoretical framework covering the dynamic programming principle for discrete-time robust MFC problems under common-noise uncertainty is established,
we aim to develop in this paper the corresponding robust $Q$-learning algorithm and its convergence analysis.

In order to introduce the learning target for our robust $Q$-learning algorithm, we begin by briefly describing the robust MFC problem under Wasserstein uncertainty in common noise, considered in this work; see Section~\ref{sec:MFC_framework} for the precise formulation. Let ${\cal Q}$ denote a set of probability measures over both idiosyncratic and common noise, inducing Wasserstein uncertainty in the common noise.
Given an initial state $\xi^i\in S$, an action processes $(a^i_t)_{t\geq 0}\subseteq A$, and a probability measure $\mathbb{P}\in {\cal Q}$, the state of agent $i\in \mathbb{N}$ evolves according to
\begin{align}\label{eq:Intro_condMckVla}
	s_{t+1}^i:=\operatorname{F}(s_{t}^i,a^i_{t},\Lambda_t^i,\varepsilon_{t+1}^i,\varepsilon_{t+1}^0)\quad\mbox{for $t\geq 0$},\quad \mbox{with $s_{0}^i:=\xi^i$},
\end{align}
where $\operatorname{F}$ denotes the transition~function,  $\varepsilon_{t+1}^i\in E$ and $\varepsilon_{t+1}^0\in E^0$ denote the idiosyncratic and common noise, respectively, and $\Lambda^i_t$ denotes the conditional law of $(s^i_{t},a^i_{t})$ given $(\varepsilon_{1}^0,\dots,\varepsilon_{t}^0)$.

Here, the idiosyncratic noise process $(\varepsilon_t^i)_{t\geq 1}$ has the fixed law $p_\varepsilon\in {\cal P}(E)$, while the common noise process $(\varepsilon^0_{t})_{t\geq 1}$ is uncertain in the following sense: Fix $m\geq 0$, $q\in \mathbb{N}$. For any $t\geq 1$, the conditional law of $\varepsilon_{t}^0$ given $(\varepsilon_{1}^0,\dots,\varepsilon_{t-1}^0)$ under $\mathbb{P}$ belongs to
\begin{align}\label{eq:uncertainty}
	{\cal B}^0_{m,q}(\widehat{p}_{\varepsilon^0}):=\{p\in {\cal P}({E^0}): {W}_q(p,\widehat{p}_{\varepsilon^0})\leq m  \},
\end{align}
where
$\widehat{p}_{\varepsilon^0}$ is a fixed reference measure and ${W}_q$ denotes the Wasserstein $q$-distance (see \eqref{eq:wasserstein}).

In this setting, the robust MFC problem is then defined~by
\begin{align}\label{eq:Intro_robustMFC}
	\sup_{(a^i_t)_{t\geq 0}} \inf_{\mathbb{P}\in {\cal Q}} \mathbb{E}^{\mathbb{P}}\bigg[\sum_{t=0}^\infty \beta^tr(s^i_{t},a^i_{t},\Lambda^i_t)\bigg],
\end{align}
where $r$ denotes the one-step reward function and $\beta \in [0,1)$ is the discount factor.

Following MFC terminology, the conditional law of $s_t^i$ and that of $(s^i_t, a^i_t)$ given the common noise history are regarded as elements of a \emph{lifted space}---the space of probability measures of the state space and the state--action space, respectively. The value function of the problem \eqref{eq:Intro_robustMFC} thus lives on the lifted state space, and, via the dynamic programming principle established in \cite{lauriere2025robust}, satisfies a Hamilton--Jacobi--Bellman--Isaacs equation on this lifted space.

The learning target of our robust $Q$-learning algorithm is the optimal $Q$-function $Q^*$ defined on ${\cal P}(S)\times \Pi$, where ${\cal P}(S)$ denotes the lifted state space and $\Pi $ denotes the set of Markov kernels inducing randomized actions. More precisely, the optimal $Q$-function $Q^*$ is the unique fixed point of the following lifted state-action operator, defined by solving for every $(\mu,\pi)\in {\cal P}(S)\times \Pi$
\begin{align}\label{intro:Hoperator}
	Q^*(\mu,\pi)= \overline r(\mu \hatotimes \pi)+\beta\inf_{p\in {\cal B}^0_{m,q}(\widehat{p}_{\varepsilon^0})} \int_{E^0} \sup_{\pi' \in \Pi} Q^*\big(\overline{\operatorname{F}}(\mu\hatotimes\pi,e^0),\pi'\big) p (de^0),
\end{align}
where $\overline r$ and $\overline {\operatorname{F}}$  denote the lifted reward and lifted transition functions, induced by the one-step reward function $r$ in \eqref{eq:Intro_condMckVla} and transition function $\operatorname{F}$ in \eqref{eq:Intro_robustMFC}, respectively; see Definition \ref{dfn:lifted_compo} in Section \ref{sec:Qlearning} for the precise definitions of $\Pi$,  $\overline r$ and $\overline {\operatorname{F}}$.

We note that this robust formulation in \eqref{eq:Intro_condMckVla} and \eqref{eq:Intro_robustMFC} reduces to the non-robust MFC framework with common noise studied in \cite{carmona2023model,motte2022mean} when the radius $m$ in \eqref{eq:uncertainty} is set to be zero. In this non-robust regime, the optimal $Q$-function is the fixed point of a standard Bellman operator,  in which the nonlinear expectation over the set ${\cal B}^0_{m,q}(\widehat{p}_{\varepsilon^0})$ in \eqref{intro:Hoperator} is replaced by a linear expectation under the reference measure $\widehat{p}_{\varepsilon^0}$. We remark that dynamic programming principles on spaces of probability measures  have already appeared in existing non-robust discrete-time MFC frameworks  \cite{gu2023dynamic,gu2021mean,lauriere2016dynamic,pham2017dynamic,pham2016discrete}.  Notably, the corresponding reinforcement learning algorithms have been developed in \cite{gu2023dynamic,angiuli2026analysis,gu2021mean,carmona2023model}. The present article contributes to this growing literature by establishing, to the best of our knowledge, the first tabular robust $Q$-learning framework for MFC under Wasserstein uncertainty in common noise.

We now turn to the main contributions of this article. First, we develop a \textit{tabular} robust $Q$-learning algorithm for learning the optimal $Q$-function defined in \eqref{intro:Hoperator} (see also \eqref{eq:optimalQ}). Two nontrivial challenges arise in this endeavor. The first is a dimensionality issue: even though the original state and action spaces $(S,A)$ are assumed to be finite, the lifted state space ${\cal P}(S)$ and the associated kernel sets have infinite cardinality, precluding a direct tabular treatment. The second is a robust-evaluation issue: the law of the common noise is not assumed to be known exactly, whereas the Bellman target requires a worst-case expectation over the Wasserstein ball ${\cal B}^0_{m,q}(\widehat{p}_{\varepsilon^0})$. Together, these features make the incorporation of robustness into a sample-based learning procedure nontrivial.

To address these challenges, we introduce two key ingredients. The first is a quantization-and-projection scheme for the domain $({\cal P}(S),\Pi)$ of $Q^*$, yielding a finite-dimensional representation amenable to tabular learning. This approach is inspired by the non-robust MFC setting of \cite{carmona2023model}. The second ingredient is a Wasserstein dual reformulation on the common-noise space itself: applying the duality theory of the Wasserstein metric \cite{bartl2020computational,blanchet2019quantifying,gao2023distributionally,mohajerin2018data} reduces the worst-case expectation over ${\cal B}^0_{m,q}(\widehat{p}_{\varepsilon^0})$ to a one-dimensional optimization problem involving a single expectation under the reference law $\widehat{p}_{\varepsilon^0}$. Furthermore, the dual reformulation enables the use of independent common-noise samples within the asynchronous algorithm.
Together, these two ingredients give rise to a robust $Q$-learning algorithm that achieves distributional robustness without requiring direct access to the worst-case common-noise law.
We refer the reader to Section~\ref{sec:Qlearningalg} for the detailed description of our $Q$-learning algorithm design.

Second, we establish a non-asymptotic convergence analysis of our
robust $Q$-learning algorithm. Following the batch/offline $Q$-learning paradigm
of \cite{even2003learning}, the algorithm admits synchronous and asynchronous
variants according to the specification of the learning rate. The proof of the
convergence result is composed of two principal layers: the first concerns the
discretized optimal $Q$-function induced by the quantization-and-projection
scheme and the resulting discretization error; the second addresses the
stochastic approximation error. Theorem~\ref{thm:asynchro} establishes that
both variants converge to within the discretization error accuracy, where the
asynchronous case further requires standard conditions on sampled data. While
\cite[Theorem~1]{6796861} serves as a backbone, the estimates for the dual
optimizer and the iterative $Q$-function are newly established and, together with
a tailored application of the Wasserstein duality theorem, yield the desired
result.

Next, Theorem~\ref{thm:asynchro_rate} provides the finite-time iteration bound in terms of the algorithmic parameters. Namely, it establishes the number of iterations sufficient to achieve a target accuracy at a prescribed confidence level. As in the convergence proof for Theorem \ref{thm:asynchro}, the discretization error analysis is required; crucially, a concentration inequality for martingale sequences \cite[Exercise~9.2.4]{azuma1967weighted} is employed to obtain the explicit stochastic approximation error bound for the discretized $Q$-function. The verification of the requisite martingale structure relies on our Wasserstein duality result and the algorithmic design of the robust $Q$-learning. As a consequence of Theorem~\ref{thm:asynchro_rate}, we further derive the convergence rate with respect to the iteration number in Corollary~\ref{cor:Qlearningrate}.

Finally, Section~\ref{sec:numerics} complements the theoretical developments with numerical experiments on systemic risk and epidemic control. These examples compare the asynchronous implementation of Algorithm~\ref{alg:Qlearning} with an idealized finite-grid Bellman iteration, illustrate how moderate robustness can improve performance under common-noise misspecification, and report the observed convergence of the asynchronous $Q$-function iterates.

\subsection{Related literature}
Recent works have adapted reinforcement learning methods to discrete-time mean-field frameworks. For discrete-time mean-field control (MFC) and related mean-field Markov decision/control formulations, we refer to \cite{gast2011mean,gast2012mean,gu2021mean,motte2022mean,carmona2023model,gu2023dynamic,angiuli2026analysis}. We stress that MFC problems are different from mean-field games (Nash equilibria among infinitely many infinitesimal players) and from mean field type games (Nash equilibria between players solving MFC problems), which have both been the subject of reinforcement learning methods, see, e.g.,  \cite{subramanian2019reinforcement,guo2019learning,elie2020convergence,cui2021approximately,lauriere2022scalable,anahtarci2023q} and~\cite{carmona2020policy,sanjari2020optimal,carmona2021linear,shao2024reinforcement,jeloka2025learning}. In contrast, MFC do not require computing Nash equilibria. Moreover, in the continuous-time setting, reinforcement learning and deep learning approaches for mean-field control and games have also been investigated e.g. in \cite{firoozi2022exploratory,wei2024unified,frikha2025actor,wei2025continuous,ren2026continuousI,ren2026continuousII}. The recent works \cite{ren2026continuousI,ren2026continuousII} are close in their treatment of common noise, but they study continuous-time non-robust mean-field control; by contrast, the present paper develops a discrete-time tabular algorithm under Wasserstein uncertainty in the common-noise law. We further refer to the survey articles \cite{lauriere2021numerical,hu2023recent,Lauriere2024meanfield} for comprehensive overviews of existing numerical methods and machine learning approaches for mean-field control and game problems.

Robustness in mean-field game and control problems has been studied through risk-sensitive and min--max formulations, including \cite{tembine2013risk,bauso2016robust,moon2016robust,huang2017robust,langner2024markov,lauriere2025robust,liang2022robust,zaman2024robust,liang2026mean}. In continuous-time settings, robust mean-field control problems have also been studied, particularly in linear--quadratic frameworks with drift, disturbance, or volatility uncertainty \cite{huang2021social,wang2017social,wang2020social}. In parallel, we refer to \cite{bauerle2022distributionally,
	wiesemann2013robust,
	xu2012distributionally,li2026policy} for robust Markov decision processes (MDPs) and to \cite{blanchet2023double,panaganti2022robust,zhang2020robust,roy2017reinforcement,liu2022distributionally,
	lu2025distributionally,
	neufeld2024robust,sester2024q} for robust $Q$-learning algorithms. These robust MDPs and $Q$-learning works do not address the lifted state-action structure arising from mean-field interactions, while the robust mean-field works above do not provide the tabular Wasserstein robust $Q$-learning algorithm and finite-time iteration bound analysis developed here. Consequently, reinforcement-learning-based numerical methods for robust mean-field game and control problems remain comparatively underdeveloped.

Moving away from the above robust mean-field framework toward the perspective of offline $Q$-learning convergence and finite-time analysis, our convergence result in Theorem~\ref{thm:asynchro} is closely aligned with the classical convergence theory of $Q$-learning algorithms \cite{6796861,watkins1992q}, which originates from stochastic approximation theory \cite{dvoretsky1955stochastic,robbins1951stochastic}. Furthermore, the finite-time iteration bound analysis established in Theorem~\ref{thm:asynchro_rate} is in the spirit of \cite{even2003learning}, and is also related to recent non-asymptotic sample-complexity analyses for $Q$-learning algorithms developed in \cite{qu2020finite,li2020sample,li2024settling,kearns1998finite,li2024breaking}. Nevertheless, our setting involves several additional layers of difficulty beyond the classical framework. In particular, the lifted mean-field state-action structure requires the construction of suitable finite discretizations together with a corresponding approximation error analysis. Moreover, the incorporation of robustness through Wasserstein uncertainty leads to a significantly more delicate convergence analysis and finite-time error estimate, due to the additional min--max structure and distributional dependence of the problem.

\subsection{Outline of the article}
The paper is organized as follows. Section~\ref{sec:Qlearning} introduces the optimal $Q$-function together with its underlying spaces and key components, presents the robust $Q$-learning algorithm built on the discretization scheme and the common-noise Wasserstein duality theorem, and establishes the convergence and finite-time iteration bound analysis as well as the convergence rate with respect to the iteration number (Theorems~\ref{thm:asynchro} and~\ref{thm:asynchro_rate} and Corollary~\ref{cor:Qlearningrate}). Section~\ref{sec:numerics} presents the numerical setup, robustness profiles, and asynchronous $Q$-function convergence diagnostics for systemic-risk and epidemic-control examples. Section~\ref{sec:MFC} derives the optimal $Q$-function from the robust MFC problem under common noise uncertainty and introduces the discretized $Q$-function. Section~\ref{sec:proof} collects the key technical lemmas and the proofs of Theorems~\ref{thm:asynchro} and~\ref{thm:asynchro_rate} and Corollary~\ref{cor:Qlearningrate}.

\section{$Q$-learning algorithm for robust mean-field control}\label{sec:Qlearning}
\subsection{Optimal $Q$-function}
In this section, we introduce the optimal $Q$-function that serves as the learning target for solving the robust mean-field control problem under Wasserstein uncertainty in the common noise.

We start by introducing the underlying spaces and the corresponding probability spaces. Let $S$ and $A$ denote the state and action spaces, and let $E$ and $E^0$ denote the idiosyncratic and common noise spaces, respectively. Throughout this article, we assume that these spaces are nonempty finite subsets of (possibly different) Euclidean spaces, endowed with the Euclidean norm $|\cdot|$.  For any such space, or any finite product of such spaces $X$, we write $\overline X:={\cal P}(X)$ for the set of all Borel probability measures on $X$.

We endow $\overline X$ with the topology induced by the 1-Wasserstein distance, which we recall to be the following: For any $\mu,\nu\in \overline X$, we write $\operatorname{Cpl}(\mu,\nu)\subset \overline {X\times X}$ for the subset of probability measures on $X\times X$ with marginals $\mu$ and $\nu$, called couplings. For~$q\in\mathbb{N}$, the $q$-Wasserstein distance between $\mu$ and $\nu$ is defined by
\begin{align}\label{eq:wasserstein}
	W_q(\mu,\nu):=\bigg(\inf_{\gamma \in \operatorname{Cpl}(\mu,\nu)}\int_{X\times X}|x-y|^q\gamma(dx,dy)\bigg)^{1/q}.
\end{align}
Since $X$ is finite, for any $q\in \mathbb{N}$  the topology induced by $W_q$ coincides with that of weak convergence; see, e.g., \cite[Chapter 6]{villani2008optimal}.

Let us introduce key components used to define the optimal $Q$-function. To this end, we recall from \eqref{eq:Intro_condMckVla} and \eqref{eq:Intro_robustMFC} in Section \ref{sec:intro} the transition function $\operatorname{F}:S\times A\times \overline{S\times A}\times E\times E^0 \to S$, the one-step reward function $r:S\times A\times \overline{S\times A}\to \mathbb{R}$, the discount factor $\beta \in [0,1)$, and the fixed law of the idiosyncratic noise $p_\varepsilon \in \overline E$.
\begin{definition}\label{dfn:lifted_compo}
	\begin{itemize}
		\item [(i)] Define $\overline {\operatorname{F}}: \overline{S\times A}\times E^0\ni(\Lambda,e^0)\mapsto \overline {\operatorname{F}}(\Lambda,e^0)\in \overline S$ %
		      by
		      \[
			      \overline {\operatorname{F}}(\Lambda,e^0)(ds'):=\big((\Lambda\otimes p_\varepsilon)\circ \operatorname{F}(\cdot,\cdot,\Lambda,\cdot,e^0)^{-1}\big)(ds'),
		      \]
		      i.e., the push-forward of the product measure $\Lambda\otimes p_\varepsilon:=\Lambda(ds,da) p_\varepsilon (de) \in \overline{S\times A\times E}$ by the mapping $\operatorname{F}(\cdot,\cdot,\Lambda,\cdot,e^0):S\times A\times E\to S$.
		\item [(ii)] Define $\overline r: \overline{S\times A}\ni \Lambda\mapsto \overline r(\Lambda)\in \mathbb{R}$ by
		      \[
			      \overline{r}(\Lambda):=\int_{S\times A}r(s,a,\Lambda)\Lambda(ds,da).
		      \]
		\item [(iii)] Let \(
		      \Pi:=\{\pi:S\ni s\mapsto \pi(\cdot|s)\in \overline A\}\)
		      denote the policy set  (i.e., the set of Markov kernels), endowed with the metric
		      \begin{equation*}
			      d_{\Pi}(\pi,\pi'):= \max_{s\in S} W_1(\pi(da|s),\pi'(da|s))\quad\mbox{for $\pi,\pi'\in \Pi$}.
		      \end{equation*}
	\end{itemize}
\end{definition}
In mean-field control frameworks \cite{gu2023dynamic,motte2022mean,carmona2023model,lauriere2025robust}, the space $\overline S$ and $\overline{S \times A}$ are referred to as the lifted state and state-action spaces, respectively. The mappings $\overline{\operatorname{F}}$ and $\overline{r}$ define the corresponding lifted transition and reward functions on the lifted space. Moreover, the set ${\Pi}$ denotes the admissible policy class and serves as the domain of maximization in the Bellman--Isaacs operator characterized by $\overline{\operatorname{F}}$ and $\overline{r}$, which we introduce in \eqref{eq:bellmanisaacs}.

The optimal $Q$-function is derived from a fixed-point theorem of the Bellman--Isaacs operator, which is established under the following conditions on the transition function $\operatorname{F}$, the reward function $r$, and the discount factor $\beta$.
\begin{assumption}\label{as:MFC} There are some constants $C_{\operatorname{F}}>0$ and $C_{{r}}>0$ such that
	\begin{itemize}
		\item [(i)] for every $(s,a,\Lambda,e^0)$ and $(\tilde s,\tilde a,\tilde\Lambda,\tilde e^0)\in S \times A \times \overline{S\times A}  \times E^0$
		      \begin{align*}
			      \quad \quad & \int_{E}|\operatorname{F}(s,a,\Lambda,e,e^0)-\operatorname{F}(\tilde s,\tilde a,\tilde\Lambda,e,\tilde e^0)|p_\varepsilon(de) \\
			                  & \quad \leq C_{{\operatorname{F}}} \big(|s-\tilde s|+|a-\tilde a|+ W_{1}(\Lambda,\tilde \Lambda)+|e^0-\tilde e^0|\big),
		      \end{align*}
		\item [(ii)] for every $(s,a,\Lambda)$ and $(\tilde s,\tilde a,\tilde \Lambda)\in S\times A \times \overline{S\times A}$
		      \begin{align*}
			      |r (s,a,\Lambda)-r (\tilde s,\tilde a,\tilde \Lambda)|\leq C_{{r}} \big(|s-\tilde s|+|a-\tilde a|+W_1(\Lambda,\tilde \Lambda)\big).
		      \end{align*}
		\item [(iii)] $\beta $ is in $[0,1\wedge (2 C_{{\operatorname{F}}})^{-1})$.
	\end{itemize}
\end{assumption}
\begin{remark}\label{rem:regular_lift}
	Since $S$ and $A$ are finite and $\overline {S\times A}$ is compact, Assumption \ref{as:MFC}\;(ii) implies that%
	\[
		C_{r,\infty}:=\sup_{(s,a,\Lambda)\in S\times A\times \overline{S\times A}}|r(s,a,\Lambda)|<\infty.
	\]
	Moreover, we have by \cite[Lemma 5.2]{lauriere2025robust} that under Assumption \ref{as:MFC}\;(i),\;(ii), the following properties on the lifted components $\overline{\operatorname{F}}$ and $\overline{r}$ in Definition \ref{dfn:lifted_compo} hold:
	\begin{itemize}
		\item [(i)]  for every $(\Lambda ,e^0),(\tilde \Lambda,\tilde e^0)\in \overline{S\times A}\times E^0$, \[W_1(\overline{\operatorname{F}}(\Lambda, e^0),\overline{\operatorname{F}}(\tilde \Lambda,\tilde e^0))\leq C_{\operatorname{F}}(2W_1(\Lambda,\tilde \Lambda)+|e^0-\tilde e^0|),\]
		\item [(ii)] for every $\Lambda, \tilde \Lambda \in \overline{S\times A}$,
		      \[
			      |\overline r(\Lambda)|\leq  C_{r,\infty}\quad \mbox{and}\quad |\overline r (\Lambda)- \overline r(\tilde \Lambda) |\leq  2 C_r.
		      \]
	\end{itemize}
\end{remark}

To introduce the Bellman--Isaacs operator  ${\cal T}$, let $C_b(\overline{S})$ denote the set of bounded, continuous functions $v:\overline{S}\to \mathbb{R}$, endowed with the supremum norm
\(
\|v\|_{\overline{S}} := \sup_{\mu \in \overline{S}} |v(\mu)|.
\)
Moreover, for any $L \geq 0$, let $\operatorname{Lip}_{b,L}(\overline{S}) \subset C_b(\overline{S})$ denote the set of bounded, $L$-Lipschitz continuous functions.

Using the components introduced in Definition~\ref{dfn:lifted_compo} together with the uncertainty set ${\cal B}^0_{m,q}(\widehat{p}_{\varepsilon^0})$ defined in~\eqref{eq:uncertainty}, we define the Bellman--Isaacs operator ${\cal T}$ by setting, for any $v \in C_b(\overline{S})$,
\begin{equation}\label{eq:bellmanisaacs}
	{\cal T}v(\mu):= \sup_{\pi\in \Pi}\bigg\{\overline{r}(\mu\hatotimes\pi)+\beta\inf_{p\in {\cal B}^0_{m,q}(\widehat{p}_{\varepsilon^0})} \int_{E^0} v(\overline{\operatorname{F}}(\mu\hatotimes\pi,e^0)) p(de^0)
	\bigg\},\quad \mbox{$\mu\in \overline{S}$},
\end{equation}
where $\mu \hatotimes\pi(ds,da):= \pi(da|s)\mu(ds)\in \overline{S\times A}$.%

We now state the fixed-point theorem associated with the operator ${\cal T}$ in~\eqref{eq:bellmanisaacs}, which follows from \cite[Propositions~2.15, 2.16]{lauriere2025robust}.
\begin{proposition}
	\label{pro:LNP2025} Suppose that Assumption~\ref{as:MFC} is satisfied, and let $L^*:= 2C_r/{(1-2\beta C_{\operatorname{F}})}$. Then there exists a unique fixed point $v^*\in \operatorname{Lip}_{b,L^*}(\overline{S})$ of~${\cal T}$ such that $v^*={\cal T}v^*$. Moreover, for any $v\in \operatorname{Lip}_{b,L^*}(\overline{S})$, $\lim_{n\to \infty}{\cal T}^nv=v^*$.
\end{proposition}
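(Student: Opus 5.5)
The plan is to apply the Banach fixed-point theorem to ${\cal T}$ on the complete metric space $(\operatorname{Lip}_{b,L^*}(\overline S),\|\cdot\|_{\overline S})$. Since $\overline S$ is compact, $\operatorname{Lip}_{b,L^*}(\overline S)$ is a closed subset of $C_b(\overline S)$: uniform limits of $L^*$-Lipschitz functions are again $L^*$-Lipschitz. It is therefore complete. The argument splits into three steps: (a) ${\cal T}$ maps $C_b(\overline S)$ into bounded functions and is a $\beta$-contraction in sup norm; (b) ${\cal T}$ maps $\operatorname{Lip}_{b,L^*}(\overline S)$ into itself; (c) conclude existence, uniqueness, and convergence of ${\cal T}^n v$ for every $v\in\operatorname{Lip}_{b,L^*}(\overline S)$.

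For (a), boundedness follows from $|\overline r|\le C_{r,\infty}$ in Remark~\ref{rem:regular_lift}, which gives $\|{\cal T}v\|_{\overline S}\le C_{r,\infty}+\beta\|v\|_{\overline S}$. For the contraction, I use the elementary facts that $|\sup f-\sup g|\le\sup|f-g|$ and $|\inf f-\inf g|\le\sup|f-g|$. Applied pointwise in $\mu$ to the inner infimum over $p$ and the outer supremum over $\pi$, these give $|{\cal T}v(\mu)-{\cal T}w(\mu)|\le\beta\|v-w\|_{\overline S}$.

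Step (b) is the main obstacle. Fix $\mu,\tilde\mu\in\overline S$ and $\pi\in\Pi$; the goal is to compare the bracket in \eqref{eq:bellmanisaacs} at $(\mu,\pi)$ and $(\tilde\mu,\pi)$. I would first show $W_1(\mu\hatotimes\pi,\tilde\mu\hatotimes\pi)\le C\,W_1(\mu,\tilde\mu)$, and try to obtain constant $1$. In general $\mu\mapsto\mu\hatotimes\pi$ is not $W_1$-Lipschitz with constant $1$ when $\pi$ depends discontinuously on $s$, so I would follow \cite[Lemma~5.2]{lauriere2025robust}. The reward term then contributes at most $2C_r W_1(\mu,\tilde\mu)$ and the transition term at most $2C_{\operatorname{F}}W_1(\mu,\tilde\mu)$; this is presumably how the factors $2$ in Remark~\ref{rem:regular_lift} combine with the lemma.

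For the robust term, I use that for $v$ $L^*$-Lipschitz and each fixed $e^0$, Remark~\ref{rem:regular_lift}(i) gives $|v(\overline{\operatorname{F}}(\mu\hatotimes\pi,e^0))-v(\overline{\operatorname{F}}(\tilde\mu\hatotimes\pi,e^0))|\le L^*C_{\operatorname{F}}\cdot 2W_1(\mu,\tilde\mu)$. Integrating against any $p$ and taking the infimum over the ball preserves this bound. Taking the supremum over $\pi$ then yields
\[
|{\cal T}v(\mu)-{\cal T}v(\tilde\mu)|\le (2C_r+2\beta C_{\operatorname{F}}L^*)\,W_1(\mu,\tilde\mu)=L^*\,W_1(\mu,\tilde\mu).
\]
The equality holds by the choice $L^*=2C_r/(1-2\beta C_{\operatorname{F}})$, which is positive by Assumption~\ref{as:MFC}(iii). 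Continuity of ${\cal T}v$ then follows as well.

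For (c), Banach's theorem gives a unique fixed point $v^*$ in $\operatorname{Lip}_{b,L^*}(\overline S)$ together with ${\cal T}^n v\to v^*$ uniformly, with $\|{\cal T}^nv-v^*\|_{\overline S}\le\beta^n\|v-v^*\|_{\overline S}$. By (a) the fixed point is in fact unique in all of $C_b(\overline S)$. Everything except the pushforward-coupling estimate in (b) is routine, and that estimate can be taken from the companion paper's Lemma~5.2 and Propositions~2.15--2.16.
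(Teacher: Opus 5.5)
Your architecture is the right one: Banach's theorem on the closed subset $\operatorname{Lip}_{b,L^*}(\overline S)$ of $C_b(\overline S)$, the $\beta$-contraction in sup norm, and invariance of the Lipschitz class. Steps (a) and (c) are fine. The paper itself gives no proof here and only cites \cite[Propositions~2.15, 2.16]{lauriere2025robust}.

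\textbf{The gap is in step (b).} You compare the bracket in \eqref{eq:bellmanisaacs} at $(\mu,\pi)$ and at $(\tilde\mu,\pi)$ with the \emph{same} kernel $\pi$. Both your reward bound $2C_rW_1(\mu,\tilde\mu)$ and your transition bound $2L^*C_{\operatorname{F}}W_1(\mu,\tilde\mu)$ then tacitly use $W_1(\mu\hatotimes\pi,\tilde\mu\hatotimes\pi)\le W_1(\mu,\tilde\mu)$. That is exactly the estimate you yourself note is false. For a common $\pi$ the best available bound is Lemma~\ref{lem:estimateQ_pre}\;(ii), with factor $K:=1+\Delta_A\underline{\Delta}_S^{-1}$, and this loss is genuine. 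For instance, with $\mu=\delta_s$, $\tilde\mu=(1-\epsilon)\delta_s+\epsilon\delta_{s'}$, $\pi(\cdot|s)=\delta_a$ and $\pi(\cdot|s')=\delta_{a'}$, the ratio $W_1(\mu\hatotimes\pi,\tilde\mu\hatotimes\pi)/W_1(\mu,\tilde\mu)$ equals $|(s,a)-(s',a')|/|s-s'|$. With that factor, your argument only shows that ${\cal T}$ preserves $\operatorname{Lip}_{b,L}(\overline S)$ for $L=2C_rK/(1-2\beta C_{\operatorname{F}}K)$. It also needs $2\beta C_{\operatorname{F}}K<1$, which Assumption~\ref{as:MFC}\;(iii) does not give. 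So the statement with $L^*$ does not follow. Citing \cite[Lemma~5.2]{lauriere2025robust} does not close this: that lemma (see Remark~\ref{rem:regular_lift}) controls $\overline{\operatorname{F}}$ and $\overline r$ in terms of $W_1(\Lambda,\tilde\Lambda)$, not the map $\mu\mapsto\mu\hatotimes\pi$.

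\textbf{The missing idea is to change the kernel along with the state measure.} The supremum defining ${\cal T}v(\tilde\mu)$ runs over all $\tilde\mu\hatotimes\tilde\pi$, i.e.\ over every element of $\overline{S\times A}$ with first marginal $\tilde\mu$. So for each $\pi$ you may pick a $\tilde\pi$ tailored to $(\mu,\tilde\mu,\pi)$. Take an optimal $\gamma\in\operatorname{Cpl}(\mu,\tilde\mu)$ and set $\tilde\pi(\cdot|s'):=\tilde\mu(\{s'\})^{-1}\sum_{s\in S}\gamma(\{(s,s')\})\pi(\cdot|s)$ when $\tilde\mu(\{s'\})>0$, and arbitrarily otherwise. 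The measure $\gamma(ds,ds')\pi(da|s)\delta_a(da')$ couples $\mu\hatotimes\pi$ with $\tilde\mu\hatotimes\tilde\pi$ and moves only the state coordinate. Hence $W_1(\mu\hatotimes\pi,\tilde\mu\hatotimes\tilde\pi)\le W_1(\mu,\tilde\mu)$.

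By Remark~\ref{rem:regular_lift} and the $L^*$-Lipschitz property of $v$, the bracket at $(\mu,\pi)$ is then at most the bracket at $(\tilde\mu,\tilde\pi)$ plus $(2C_r+2\beta C_{\operatorname{F}}L^*)W_1(\mu,\tilde\mu)=L^*W_1(\mu,\tilde\mu)$. It is therefore at most ${\cal T}v(\tilde\mu)+L^*W_1(\mu,\tilde\mu)$. Taking the supremum over $\pi$ and exchanging $\mu$ and $\tilde\mu$ gives $|{\cal T}v(\mu)-{\cal T}v(\tilde\mu)|\le L^*W_1(\mu,\tilde\mu)$. This one-sided comparison with a re-chosen kernel replaces your common-index estimate $|\sup_\pi f-\sup_\pi g|\le\sup_\pi|f-g|$. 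With it, the rest of your proof goes through as written.
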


Using $v^*$ from Proposition \ref{pro:LNP2025}, we define the optimal $Q$-function $Q^*:\overline S\times \Pi \to \mathbb{R}$ by%
\begin{align}\label{eq:optimalQ}
	{Q}^*(\mu, \pi):= \overline r(\mu \hatotimes \pi)+\beta\inf_{p\in {\cal B}^0_{m,q}(\widehat{p}_{\varepsilon^0})} \int_{E^0} v^*(\overline{\operatorname{F}}(\mu\hatotimes\pi,e^0)) p (de^0),\quad (\mu,\pi)\in\overline{S}\times \Pi.
\end{align}
It is {optimal} in the sense that
\(
\sup_{\pi \in \Pi} Q^*(\mu,\pi)= v^*(\mu)\) for any $\mu\in \overline{S}$.

The optimal $Q$-function in \eqref{eq:optimalQ} can be viewed as the state--action value function of a {\it lifted} robust Markov decision process (MDP) under model uncertainty, induced by the robust mean-field control problem under Wasserstein uncertainty in the common noise. This connection will be presented in detail in Section~\ref{sec:MFC}.

\subsection{$Q$-learning algorithm}\label{sec:Qlearningalg}
Our goal is to design a {\it tabular} $Q$-learning algorithm to learn the optimal $Q$-function $Q^*$ defined in~\eqref{eq:optimalQ}. We emphasize that although the state and action spaces $(S,A)$ are finite, the corresponding lifted spaces of measures $\overline S$ and kernels $\Pi$ are not.
Therefore, from a numerical view point,  two key ingredients are required: (i) quantization and projection of the domain $(\overline S,\Pi)$ of $Q^*$ and (ii) computation of worst-case expectations over the Wasserstein uncertainty set ${\cal B}_{m,q}^0(\widehat p_{\varepsilon^0})$ in $Q^*$. These two enable a finite-dimensional approximation of $Q^*$: the discretization of $(\overline S,\Pi)$ enables a tractable tabular representation of $Q^*$, while the evaluation of worst-case expectations ensures that the robustness with respect to the set ${\cal B}^0_{m,q}(\widehat p_{\varepsilon^0})$ is properly incorporated into the learning procedure.

We begin by introducing the quantization and projection for $(\overline S,\Pi)$, following the approach in Section~5.3 of \cite{carmona2023model}.
\begin{definition}\label{dfn:projectedmaps} Let $\check S\subset \overline{S}$ %
	be a {\it finite} subset such that for any $\mu\in \overline {S}$,
	\[
		\min_{\check\mu\in \check S}{W}_1(\mu,\check\mu)\leq \varepsilon_{\check S}\quad\mbox{for some $\varepsilon_{\check S}>0$},
	\]
	representing an $\varepsilon_{\check S}$-net of $\overline{S}$ under $W_1$. Analogously, let $\check A\subset \overline{A}$ be a finite subset such that for any $\nu\in \overline {A}$,
	\(
	\min_{\check\nu\in \check A}{W}_1(\nu,\check\nu)\leq \varepsilon_{\check A}\) for some~$\varepsilon_{\check A}>0$.
	\begin{itemize}
		\item [(i)] Let ${\operatorname{pj}}_{\check S}:\overline{S}\ni\mu \mapsto \operatorname{pj}_{\check S}(\mu)\in \check S$ be a mapping satisfying
		      \[
			      {W}_1(\mu,\operatorname{pj}_{\check S}(\mu))=\min_{\check\mu \in \check S}{W}_1(\mu,\check\mu).
		      \]
		\item [(ii)]
		      Define
		      \(
		      \check \Pi:=\{\check\pi:S\ni s\mapsto \check\pi(da|s)\in \check A\}
		      \)
		      so that it is a finite subset of $\Pi$; see Definition~\ref{dfn:lifted_compo}\;(iii). %
		      Then let $\operatorname{pj_{\check \Pi}}:\Pi\ni \pi \mapsto \operatorname{pj_{\check \Pi}}(\pi)\in \check \Pi$ be a mapping~satisfying
		      \[
			      d_\Pi (\pi, \operatorname{pj}_{\check{{\Pi}}}(\pi))= \min_{\check \pi \in \check \Pi} {d}_\Pi(\pi,\check\pi).
		      \]
	\end{itemize}
\end{definition}
\begin{remark}
	\begin{itemize}
		\item [(i)]
		      We present a construction of $\check S$ below; an analogous construction applies to $\check A$.
		      Denote $S:=\{s_1,\dots,s_{|S|}\}$ and let $\Delta_S:=\max_{s\neq s' \in S}|s-s'|<\infty$. For $k\in \mathbb{N}$, define
		      \[
			      \check S:= \bigg\{\mu \in \overline{S}:\mu(\{s_i\})=\frac{b_i}{k},\;b_i\in \mathbb{N}\cup \{0\}\;\mbox{$\forall i=1,\dots,|S|$},\;\sum_{i=1}^{|S|} b_i =k\bigg\},
		      \]
		      which is finite with $|\check S|=\binom{k+|S|-1}{|S|-1}$.

		      For any $\mu \in \overline{S}$, set $\ell:=k- \sum_{i=1}^{|S|} \lfloor k \mu(\{s_i\}) \rfloor $ and define $\check \mu' \in \check S$  by %
		      \[
			      \check\mu'(\{s_i\}):=\frac{\lfloor k \mu(\{s_i\}) \rfloor  +1}{k}\quad \mbox{if}\;i=1,\dots,\ell;\quad \check\mu'(\{s_i\}):=\frac{\lfloor k \mu(\{s_i\}) \rfloor}{k}\quad \mbox{else}.
		      \]
		      Then $|\mu(\{s_i\})-\check\mu'(\{s_i\})|\leq \frac{1}{k}$ for all $i=1,\dots,|S|$. Hence by \cite[Theorem 6.15]{villani2008optimal},
		      \[
			      W_1(\mu,\check \mu')\leq \Delta_S \| \mu -\check \mu'\|_{\operatorname{TV}}= \frac{\Delta_S}{2}\sum_{i=1}^{|S|}|\mu(\{s_i\})-\check\mu'(\{s_i\})|\leq \frac{\Delta_S|S|}{2k},
		      \]
		      where $\| \cdot\|_{\operatorname{TV}}$ denotes the total variation and the equality holds because $S$ is finite. Thus, choosing $k\geq \lceil \frac{\Delta_S|S|}{2\varepsilon_{\check S}} \rceil$ ensures $\min_{\check\mu\in \check S}{W}_1(\mu,\check\mu)\leq \varepsilon_{\check S}$.

		      Finally, once the construction of $\check A$ is obtained as above, the set $\check \Pi$ is naturally induced as the set of mappings from $S$ to $\check A$, and hence satisfies $|\check \Pi| = |\check A|^{|S|}$.
		\item [(ii)] %
		      A construction of the measurable mapping $\operatorname{pj}_{\check S}$ in Definition \ref{dfn:projectedmaps}\;(i)  is given as follows.
		      Fix an arbitrary ordering $\check S=\{\check\mu_1,\dots,\check\mu_{N}\}$ for some $N\in \mathbb{N}$. For any $\mu \in \overline{S}$, define%
		      \begin{align*}
			      \quad \quad \mathrm{pj}_{\check S}(\mu)
			      := \check\mu_{i^{*,\mu}},\quad \mbox{where } i^{*,\mu}
			      := \min\Big\{i \in\{1,\dots,N\} :
			      W_1(\mu,\check\mu_i)
			      = \min_{1\le \ell\le N} W_1(\mu,\check\mu_\ell) \Big\},
		      \end{align*}
		      i.e., the nearest element of
		      $\check S$, with tie-breaking according to
		      the fixed ordering.

		      Since $\check\Pi$ in Definition \ref{dfn:projectedmaps}\;(ii) is finite, we can construct the mapping
		      $\operatorname{pj}_{\check\Pi}$ analogously by replacing
		      $(\overline S,\check S;W_1)$ with $(\Pi,\check \Pi;d_\Pi)$.
	\end{itemize}
\end{remark}

Next, to enable the tractable computation of worst-case expectations over the set ${\cal B}^0_{m,q}(\widehat p_{\varepsilon^0})$, we exploit a convex duality representation that expresses worst-case expectations in terms of linear expectations with respect to the reference measure $\widehat p_{\varepsilon^0}$,  as established, e.g., in \cite{bartl2020computational,blanchet2019quantifying,gao2023distributionally,mohajerin2018data}. To this end, we recall the notion of $\lambda c $-transform on the common noise space $E^0$; see Section~2 in \cite{bartl2020computational}, Section~5 in \cite{villani2008optimal}.

\begin{definition}\label{dfn:lctransf}
	For $f:E^0\to \mathbb{R}$ and %
	$\lambda \geq 0$, let $(f)^{\lambda }:E^0\ni e^0\mapsto (f)^{\lambda }(e^0)\in\mathbb{R}$ denote the  $\lambda c$-transform of~$f$ with the cost function $
		c:E^0\times E^0\ni(e^0,\tilde e^0)\mapsto c(e^0,\tilde e^0):=|e^0-\tilde e^0|^q$ defined by
	\begin{align}\label{eq:lctransform}
		(f)^{\lambda }(e^0):=\max_{\tilde e^0 \in E^0} \{f(\tilde e^0) - \lambda |e^0-\tilde e^0|^q\}.
	\end{align}
\end{definition}

We next recall the convex duality result established, e.g., in Theorem~2.4 of \cite{bartl2020computational}.
\begin{lemma}\label{lem:lctransform}
	For any mapping $f:E^0\to \mathbb{R}$, the following convex duality holds:
	\[
		\inf_{p\in {\cal B}^0_{m,q}(\widehat p_{\varepsilon^0})}\int_{E^0} f (e^0)p(de^0)= \sup_{\lambda \geq 0}\bigg\{\int_{E^0}\big(-(-f)^{\lambda }(e^0)\big)\widehat p_{\varepsilon^0}(de^0) -m^q \lambda\bigg\}.
	\]
\end{lemma}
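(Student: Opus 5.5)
Since $E^0$ is finite, the statement reduces to a finite-dimensional linear programming duality, and the plan is to prove it directly that way rather than invoking a general Wasserstein-DRO theorem. Set $g:=-f$. Then
\[
\inf_{p\in {\cal B}^0_{m,q}(\widehat p_{\varepsilon^0})}\int_{E^0} f\,dp=-\sup_{p\in {\cal B}^0_{m,q}(\widehat p_{\varepsilon^0})}\int_{E^0} g\,dp .
\]
The claim is therefore equivalent to
\[
\sup_{p\in {\cal B}^0_{m,q}(\widehat p_{\varepsilon^0})}\int_{E^0} g\,dp=\inf_{\lambda\geq 0}\Big\{\int_{E^0}(g)^{\lambda}\,d\widehat p_{\varepsilon^0}+\lambda m^q\Big\}.
\]

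\textbf{Step 1: lift to couplings.} Because $E^0$ is finite, an optimal coupling exists in \eqref{eq:wasserstein}. Hence $W_q(p,\widehat p_{\varepsilon^0})\le m$ if and only if there is some $\gamma\in\operatorname{Cpl}(\widehat p_{\varepsilon^0},p)$ with $\int c\,d\gamma\le m^q$. The left-hand side therefore equals the value of the LP
\[
\max_{\gamma}\ \sum_{x,y}\gamma(x,y)g(y),
\]
subject to $\gamma\ge 0$, $\sum_y\gamma(x,y)=\widehat p_{\varepsilon^0}(x)$ for every $x\in E^0$, and $\sum_{x,y}\gamma(x,y)|x-y|^q\le m^q$.

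\textbf{Step 2: apply LP duality to the single cost constraint.} The feasible set is nonempty, since the diagonal coupling of $\widehat p_{\varepsilon^0}$ with itself has cost $0\le m^q$. It is also a compact polytope, so the LP is feasible and bounded, and finite-dimensional LP strong duality holds with no Slater condition needed. This is what makes the case $m=0$ automatic.

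We dualize only the cost constraint with a multiplier $\lambda\ge0$ and keep the marginal constraints in the inner problem. This gives the value
\[
\min_{\lambda\ge0}\Big\{\lambda m^q+\max_{\gamma\in\operatorname{Cpl}(\widehat p_{\varepsilon^0},\cdot)}\sum_{x,y}\gamma(x,y)\big(g(y)-\lambda|x-y|^q\big)\Big\}.
\]
The justification is the standard Lagrangian duality for linear programs (equivalently, a minimax theorem for a bilinear function over a polytope times $[0,\infty)$).

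\textbf{Step 3: solve the inner maximization pointwise.} The inner problem decouples over $x$. For each $x$, the row $\gamma(x,\cdot)$ has total mass $\widehat p_{\varepsilon^0}(x)$, and the optimum puts all of it on a maximizer of $y\mapsto g(y)-\lambda|x-y|^q$. The inner value is therefore
\[
\sum_x\widehat p_{\varepsilon^0}(x)\,(g)^{\lambda}(x),
\]
with $(g)^\lambda$ the $\lambda c$-transform of Definition~\ref{dfn:lctransf}.

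\textbf{Step 4: translate back.} Substituting $g=-f$ and negating turns the $\min_{\lambda\ge0}$ into $\sup_{\lambda\ge0}$ of
\[
\int_{E^0}-(-f)^\lambda\,d\widehat p_{\varepsilon^0}-m^q\lambda,
\]
which is exactly the statement.

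\textbf{Main obstacle.} The only delicate point is justifying strong duality in Step 2, especially at $m=0$, where Slater's condition fails. I would rely on finite LP duality, which needs only primal feasibility and boundedness. As a sanity check for $m=0$: the inequality $-(-f)^\lambda\le f$ always holds (take $\tilde e^0=e^0$ in the maximum). Moreover, $-(-f)^\lambda(x)\to f(x)$ as $\lambda\to\infty$, because $|x-y|^q$ is bounded below by a positive constant for $y\neq x$ on the finite set $E^0$. Hence the supremum over $\lambda$ equals $\int f\,d\widehat p_{\varepsilon^0}$, consistent with ${\cal B}^0_{0,q}(\widehat p_{\varepsilon^0})=\{\widehat p_{\varepsilon^0}\}$. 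In the $m=0$ case the dual supremum need not be attained, which is why the statement uses $\sup$ rather than $\max$.
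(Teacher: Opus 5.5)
Your proof is correct, but it follows a different route from the paper. The paper gives no argument of its own; it simply invokes the general Wasserstein-ball duality of \cite[Theorem~2.4]{bartl2020computational}, which is formulated for general, not necessarily finite, spaces.

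You instead exploit the finiteness of $E^0$ in three moves. Lifting the ball to couplings with first marginal $\widehat p_{\varepsilon^0}$ and transport cost at most $m^q$ makes the primal a feasible, bounded LP over a compact polytope. Dualizing only the cost constraint then needs no Slater condition. Finally, the inner problem decouples row by row into $\sum_x\widehat p_{\varepsilon^0}(x)(g)^{\lambda}(x)$.

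Your route buys three things: a self-contained argument with no hypotheses to check, a uniform treatment of $m=0$, and dual attainment. Finite LP duality attains the dual optimum, so an optimal $\lambda$ exists for every $m\ge 0$. That is exactly what Lemma~\ref{lem:Q_est}(i) needs, whereas the paper's coercivity argument there (via $\check Q_{\operatorname{max}}-m^q\lambda$) only works for $m>0$. The cited theorem is what one would need for a non-finite common-noise space.

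One correction to your closing remark: at $m=0$ the dual supremum \emph{is} attained here. This follows from the LP duality you invoke yourself. It can also be seen directly: $-(-f)^{\lambda}=f$ holds exactly on $E^0$ as soon as $\lambda\ge(\max f-\min f)/\min_{x\neq y}|x-y|^q$, not merely in the limit $\lambda\to\infty$. Your speculation that this is why the statement uses $\sup$ rather than $\max$ is therefore unfounded. The remark plays no role in your proof, so the argument is unaffected.
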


Using the quantization and projection in Definition~\ref{dfn:projectedmaps}, together with the convex duality result in Lemma~\ref{lem:lctransform}, we present a tabular $Q$-learning algorithm under both the synchronous and asynchronous frameworks of \cite{even2003learning}.
\begin{framework}\label{frame:Qlearning} %
	Let $(\Omega^0,{\mathcal F}^0,\widehat{\mathbb{P}}^0)$ be a probability space supporting a family
	of  independent and identically distributed random variables \((\varepsilon^0_{t,(\check\mu,\check\pi)})_{
			t\ge 1,
			(\check\mu,\check\pi)\in\check S\times\check\Pi
		}\subseteq E^0,
	\)
	with law $\widehat{p}_{\varepsilon^0}$. Both synchronous and asynchronous frameworks share the $Q$-learning update rule in (iii) and differ only in the choice of learning rates specified in (i) and (ii). Fix $w \in (\frac{1}{2},1)$.
	\begin{itemize}
		\item [(i)] (Synchronous learning rate) Define the synchronous learning rates by $\alpha_{t ,(\check \mu, \check \pi)}:=(t +1)^{-w}$ for every $t \geq 0$ and $(\check \mu,\check \pi)\in \check S\times \check \Pi$.
		\item [(ii)] (Asynchronous learning rate) Let $(\check \mu_t,\check \pi_t)_{t\geq 0} \subseteq \check S \times \check \Pi$ be a pre-sampled, projected dataset. %
		      Then we define for every \((\check \mu,\check \pi)\in \check S\times \check \Pi\)
		      \begin{align}\label{eq:visiting_time}
			      T_{(\check \mu, \check \pi)}:= \{t\geq 0: (\check \mu,\check \pi)=(\check \mu_t, \check \pi_t)\},
		      \end{align}
		      representing the set of times $t\geq 0$ at which $(\check \mu,\check \pi)$ is visited along $(\check \mu_t, \check \pi_t)_{t\geq 0}$. Finally, we define the asynchronous learning rates as follows: for every $t\geq 0$ and $(\check \mu,\check \pi)\in \check S\times \check \Pi$,
		      \begin{align}\label{eq:learningrate}
			      \alpha_{t,(\check \mu, \check \pi)}:=({N}_{t,(\check \mu,\check \pi)}+1)^{-w}\quad \mbox{if $t\in T_{(\check \mu, \check \pi)}$};\quad \alpha_{t,(\check \mu, \check \pi)}(\check \mu, \check \pi):=0\quad \mbox{else},
		      \end{align}
		      where \(
		      {N}_{t,(\check \mu,\check \pi)}%
		      \)
		      denotes the number of times $0\leq \ell<t$ at which $(\check \mu,\check \pi)=(\check \mu_\ell, \check \pi_\ell)$, with the convention that ${N}_{0,(\check \mu,\check \pi)}=0$.
		\item [(iii)] %
		      Under either choice of learning rates in (i) or (ii),  the $Q$-learning algorithm is defined by the following stochastic iteration: %
		      For every $t\geq 0$ and every pair $(\check \mu,\check\pi)\in \check S\times \check \Pi$, %
		      \begin{align}\label{eq:asynchro_Q0}
			      \check{Q}_0(\check \mu, \check \pi):=     & \check Q_0,\quad \mbox{for some $\check Q_0\in \mathbb{R}$},                                                                                                                                                                                                                                       \\
			      \check{Q}_{t+1}(\check \mu, \check \pi):= & (1-\alpha_{t,(\check \mu, \check \pi)})\check{Q}_{t}(\check \mu, \check \pi)
			      \label{eq:asynchro_Q1}                                                                                                                                                                                                                                                                                                                         \\
			                                                & \nonumber +\alpha_{t,(\check \mu, \check \pi)}\Big(\overline r(\check \mu\hatotimes\check \pi) +\beta \Big[-\big(-J_{t,(\check \mu, \check \pi)} \big)^{\lambda_{t,(\check \mu, \check \pi)}^*}(\varepsilon^0_{t+1,(\check\mu,\check\pi)})-m^q \lambda_{t,(\check \mu, \check \pi)}^* \Big] \Big),
		      \end{align}
		      where $J_{t,(\check \mu, \check \pi)}:E^0\ni e^0\mapsto J_{t,(\check \mu, \check \pi)}(e^0)\in\mathbb{R}$ is defined by
		      \begin{align}\label{eq:defJ}
			      J_{t,(\check \mu, \check \pi)}(e^0):= \max_{\check\pi'\in \check \Pi}\check {Q}_t\big(\operatorname{pj}_{\check S}\big(\overline{\operatorname{F}}(\check \mu\hatotimes\check \pi,e^0)\big),\check\pi'\big),
		      \end{align}
		      where $\lambda_{t,(\check \mu, \check \pi)}^*$ appearing in \eqref{eq:asynchro_Q1} is an optimizer~of the dual problem $\Phi_{t,(\check\mu,\check \pi)}$ defined by %
		      \begin{align}\label{eq:defconjugateJ}
			      \begin{aligned}
				       &
				      \Phi_{t,(\check\mu,\check \pi)}:= \sup_{\lambda \geq 0}\phi_{t,(\check\mu,\check \pi)}(\lambda),                                                                                       \\
				       & \phi_{t,(\check\mu,\check \pi)}(\lambda):=\int_{E^0}\Big(-(-J_{t,(\check \mu, \check \pi)})^{\lambda }(e^0)\Big)\widehat p_{\varepsilon^0}(de^0) -m^q \lambda,\quad \lambda \geq 0. %
			      \end{aligned}
		      \end{align}
	\end{itemize}
\end{framework}

\begin{remark}\label{rem:dual_Phi} %
	For the synchronous $Q$-learning algorithm under Framework \ref{frame:Qlearning}\;(i)~and~(iii), the update $\check Q_{t+1}$ in~\eqref{eq:asynchro_Q1} occurs at all pairs in $\check S\times\check\Pi$. In contrast, under the asynchronous $Q$-learning algorithm under Framework~\ref{frame:Qlearning}\;(ii)~and~(iii), $\check Q_{t+1}$ is updated only for the visited pair $(\check\mu_t,\check\pi_t)$ at time $t$, since $\alpha_{t,(\cdot, \cdot)}$ is nonzero only in this case; see \eqref{eq:learningrate}. These update rules are explicitly illustrated in Algorithm \ref{alg:Qlearning}, which presents the overall iteration in chronological order.

	The following statements hold for either choice of learning rates. For $t\geq 0$ and $(\check \mu,\check\pi)\in \check S\times \check \Pi$,
	\begin{itemize}
		\item [(i)] $\lambda_{t,(\check \mu,\check \pi)}^*$ in \eqref{eq:asynchro_Q1} is well-defined under certain conditions on $\beta$ and $\check Q_0$ in \eqref{eq:asynchro_Q0}; see Assumption \ref{as:Qlearning} and Remark~\ref{rem:Qlearning}. %
		\item [(ii)] %
		      We have by Lemma~\ref{lem:lctransform} that $\Phi_{t,(\check\mu,\check \pi)}$ in \eqref{eq:defconjugateJ} satisfies %
		      \begin{align}\label{eq:dual_Phi}
			      \Phi_{t,(\check\mu,\check \pi)}=\inf_{p \in {\cal B}^0_{m,q}(\widehat p_{\varepsilon^0})} \int_{E^0} \max_{\check\pi'\in \check \Pi}\check {Q}_t\big(\operatorname{pj}_{\check S}(\overline{\operatorname{F}}( \mu\hatotimes\pi, e^0)),\check\pi'\big) p(de^0).
		      \end{align}
		      Moreover, when $m=0$ (i.e., the non-robust case), the update rule \eqref{eq:asynchro_Q1} reduces to
		      \begin{align*}
			      \qquad \quad \check{Q}_{t+1}(\check \mu, \check \pi)=
			      (1-\alpha_{t,(\check \mu, \check \pi)})\check{Q}_{t}(\check \mu, \check \pi)+\alpha_{t,(\check \mu, \check \pi)}\Big[\overline r(\check \mu\hatotimes\check \pi)+\beta \max_{\check\pi'\in \check \Pi}\check {Q}_t(\check \mu_{t+1,(\check\mu,\check\pi)}',\check\pi')\Big],
		      \end{align*}
		      where $\check \mu_{t+1,(\check\mu,\check\pi)}'
			      :=
			      \operatorname{pj}_{\check S}
			      (
			      \overline{\operatorname{F}}
			      (\check\mu\hatotimes\check\pi,
			      \varepsilon_{t+1,(\check\mu,\check\pi)}^0)
			      )\in \check S$ represents the lifted-state transition from the pair $(\check \mu, \check \pi)$ at time~$t$. Therefore, this case corresponds to the standard tabular $Q$-learning algorithm on the finite space $\check S\times \check \Pi$ (see, e.g., \cite{bertsekas2025neuro,watkins1992q,littman1996generalized,6796861}).
	\end{itemize}
\end{remark}

\begin{remark}\label{rem:presample_data}
	The asynchronous $Q$-learning algorithm under Framework~\ref{frame:Qlearning} (ii) and (iii) adopts an offline-learning, independent-sampling setting. The pre-sampled, projected data $(\check{\mu}_t, \check{\pi}_t)_{t \geq 0}$ in Framework~\ref{frame:Qlearning} (ii) is obtained by applying the projection mappings $\operatorname{pj}_{\check S}$ and $\operatorname{pj}_{\check \Pi}$ given in Definition~\ref{dfn:projectedmaps} to a historically collected lifted trajectory $(\mu_t, \pi_t)_{t \geq 0}\subseteq \overline S\times \Pi$, and is therefore  regarded as fixed or, more generally, as defined on an auxiliary probability space independent of $(\Omega^0, \mathcal{F}^0, \widehat{\mathbb{P}}^0)$. Consequently, the visitation pattern of state and action pairs is prescribed by this projected data. The stochasticity in update~\eqref{eq:asynchro_Q1} enters through the random variables  $(\varepsilon^0_{t,(\check{\mu},\check{\pi})})_{t \geq 1,\, (\check{\mu},\check{\pi}) \in \check{S} \times \check{\Pi}}$. These are directly sampled from a simulator with law $\widehat{p}_{\varepsilon^0}$, constructed from the original (unprojected) historical lifted trajectory $(\mu_t, \pi_t)_{t \geq 0}\subseteq \overline S\times \Pi$, for instance, via filtering or direct empirical estimation of the common noise processes. The probability space $(\Omega^0, \mathcal{F}^0, \widehat{\mathbb{P}}^0)$ is then understood as the canonical space supporting this simulator, with $\widehat{\mathbb{P}}^0$ the law it induces on $\Omega^0$. This is consistent with the batch/offline $Q$-learning paradigm, in which learning is carried out from static, previously collected data;  see, e.g., \cite{lange2012batch, fujimoto2019off, levine2020offline, rashidinejad2022bridging, li2024settling}.

	Numerically, one could alternatively generate data along the way using an $\epsilon$-greedy policy. This approach generally leads to better numerical results but it can be difficult to guarantee that the assumptions used in the proof of convergence (in particular Assumption~\ref{as:Qlearning} (ii) related to the covering time) are satisfied.
\end{remark}

\begin{algorithm}[t]
	\caption{$Q$-learning algorithm for robust MFC problem}
	\label{alg:Qlearning}
	\begin{algorithmic}[1]
		\REQUIRE  Number of iteration $T\in \mathbb{N}$. pre-sampled, projected dataset $(\check \mu_t, \check \pi_t)_{t=0}^{T-1}\subseteq \check S\times \check \Pi$ used in the asynchronous $Q$-learning algorithm under Framework~\ref{frame:Qlearning}\;(ii),(iii). %
		\STATE Initialize $\check Q_0(\check \mu,\check \pi):=\check Q_0$ for all $(\check \mu,\check \pi)\in\check S\times \check \Pi$ for some $\check Q_0\in \mathbb{R}$.
		\IF{Framework \ref{frame:Qlearning}\;(i),(iii) (i.e., the synchronous $Q$-learning)}
		\STATE Set $\alpha_{t,(\mu, \check \pi)}:=(t+1)^{-w}$ for all $t=0,\dots,T-1$ and $(\check \mu,\check \pi)\in \check S\times \check \Pi$.
		\FOR{$t = 0, \ldots, T-1$}
		\FOR{every $(\check \mu,\check \pi)\in  \check S\times \check \Pi$}
		\STATE Execute action $\check \pi$ in state $\check \mu$, and observe $\overline r(\check \mu\hatotimes\check \pi)$ and $\varepsilon_{t+1,(\check \mu,\check \pi)}^0$.
		\STATE Compute the optimizer $\lambda^*_{t,(\check \mu,\check \pi)}$ of $\Phi_{t,(\check \mu,\check \pi)}$; see \eqref{eq:defconjugateJ}.
		\STATE Compute the $\lambda c$-transform value $(-J_{t,(\check \mu, \check \pi)} )^{\lambda_{t,(\check \mu, \check \pi)}^*}(\varepsilon^0_{t+1,(\check\mu,\check\pi)})$; see \eqref{eq:lctransform} and \eqref{eq:defJ}.
		\STATE Update $\check{Q}_{t+1}(\check \mu, \check \pi)$ according to \eqref{eq:asynchro_Q1}.
		\ENDFOR
		\ENDFOR
		\ELSE
		\STATE Use $(\check \mu_t, \check \pi_t)_{t=0}^{T-1}$ to compute $(\alpha_{t,(\check \mu_t, \check \pi_t)})_{t=0}^{T-1}$ according to \eqref{eq:learningrate} (i.e., the asynchronous $Q$-learning algorithm under Framework~\ref{frame:Qlearning}\;(ii),(iii)).
		\FOR{$t = 0, \ldots, T-1$}
		\STATE Set $\check Q_{t+1}(\check \mu,\check \pi):=\check Q_{t}(\check \mu,\check \pi)$ for all $(\check \mu,\check\pi)\in \check S\times \check \Pi$.
		\STATE Apply Steps 6--9 with $(\check \mu_t,\check \pi_t)$ in place of $(\check \mu,\check \pi)$ to update only $\check Q_{t+1}(\check \mu_t,\check \pi_t)$.
		\ENDFOR
		\ENDIF
		\RETURN $\check Q_{T^*}$
	\end{algorithmic}
\end{algorithm}

\subsection{Main theorems}
We proceed with our main results on the convergence of the $Q$-learning algorithm prescribed in Framework \ref{frame:Qlearning}, together with its finite-time iteration bound analysis.

To this end, we impose the following conditions.
\begin{assumption}\label{as:Qlearning} Recall the constants $C_{\operatorname{F}}>0$ and $C_r>0$ in Assumption \ref{as:MFC}.
	\begin{itemize}
		\item [(i)] $\beta$ is in $[0,\frac{1}{3}\wedge (2C_{\operatorname{F}})^{-1})$. Moreover, $\check Q_0$ in \eqref{eq:asynchro_Q0} satisfies $|\check Q_0|\leq \check Q_{\operatorname{max}}:=\frac{C_{{r,\infty}}}{1-3\beta}$.
		\item [(ii)] There exists a finite covering time $T_{\operatorname{cov}}> 1$ such that for every $s\geq 0$ \[\check S\times \check \Pi \subseteq (\check \mu_t,\check \pi_t)_{t=s,\dots,s+T_{\operatorname{cov}}-1},\]
		      where $(\check \mu_t,\check \pi_t)_{t\geq 0}$ is the pre-sampled, projected dataset used for Framework~\ref{frame:Qlearning}\;(ii).
	\end{itemize}
\end{assumption}
\begin{remark}\label{rem:Qlearning}
	The condition on $\beta$ in Assumption \ref{as:Qlearning}\;(i) is stronger than that in Assumption~\ref{as:MFC}~(iii). We impose this stronger condition, together with the assumption on $\check Q_0$, in order to establish the existence of $\lambda_{t,(\cdot, \cdot)}^*$ for all $t\geq 0$ in \eqref{eq:defconjugateJ}, as well as to obtain uniform-in-time bounds for $(\lambda_{t,(\cdot, \cdot)}^*)_{t\geq 0}$ and  $(\check Q_t)_{t\geq 0}$; see Lemma~\ref{lem:Q_est}. We also note that other conditions on the discount factor appear in robust $Q$-learning frameworks such as \cite{tamar2014scaling,roy2017reinforcement}, where they are primarily used to establish the contraction property and convergence of the $Q$-learning iteration.
\end{remark}
\begin{remark}
	The covering-time condition in Assumption~\ref{as:Qlearning}\;(ii) ensures that under the asynchronous learning rates in Framework \ref{frame:Qlearning}\;(ii), every pair in $\check S\times\check \Pi$ is visited at least once within any time window of length greater than the covering time~$T_{\operatorname{cov}}$. Such a condition is standard in the asynchronous $Q$-learning literature to guarantee sufficient exploration and convergence of the respective $Q$-learning algorithm; see, e.g., \cite{beck2012error,li2020sample,qu2020finite,szepesvari1997asymptotic}.
\end{remark}

We now present the convergence result of our $Q$-learning algorithm.
To this end, we define
\begin{align}\label{eq:sep_dis_diam}
	\underline{\Delta}_S:=\min_{s\neq s'\in S}|s-s'|>0\quad \mbox{and}\quad \Delta_A:=\max_{a\neq a'\in A}|a-a'|<\infty,
\end{align}
which represent the minimal separation distance in~$S$ and the diameter of $A$, respectively.

Last, we recall the constant $L^*$ from Proposition \ref{pro:LNP2025}. %
\begin{theorem}\label{thm:asynchro}
	Under Assumptions \ref{as:MFC}\;(i),(ii) and \ref{as:Qlearning}\;(i), the following statements hold:
	\begin{itemize}
		\item [(i)] the synchronous version of the $Q$-learning algorithm in Framework \ref{frame:Qlearning}\;(i),(iii) satisfies, for every
		      $(\mu,\pi)\in \overline S\times \Pi$,
		      \begin{align}\label{eq:Qconverge}
			      \lim_{t\to \infty}|\check{Q}_{t}(\operatorname{pj}_{\check S}(\mu),\operatorname{pj}_{\check \Pi}(\pi))-Q^*(\mu, \pi)|\leq C_1\varepsilon_{\check S}+{C_2}\varepsilon_{\check A},\quad \mbox{$\widehat{\mathbb{P}}^0$-a.s.},
		      \end{align}
		      where $C_1:=2(1+\Delta_A\underline{{\Delta}}_S^{-1})(C_r+\beta L^* C_{\operatorname{F}})+\frac{\beta}{1-\beta}L^*>0$ and $C_2:=\frac{2(C_r+\beta L^* C_{\operatorname{F}})}{1-\beta}>0$.
		\item [(ii)] Moreover, if Assumption \ref{as:Qlearning}\;(ii) also holds, then the asynchronous version of the $Q$-learning algorithm in Framework \ref{frame:Qlearning}\;(ii),(iii) satisfies  \eqref{eq:Qconverge} for every $(\mu,\pi)\in \overline S\times \Pi$.
	\end{itemize}
\end{theorem}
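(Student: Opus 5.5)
The plan is to split the error into a discretization part and a stochastic-approximation part through an intermediate \emph{discretized optimal $Q$-function} $\check Q^*:\check S\times\check\Pi\to\mathbb{R}$. We define it as the unique fixed point of the finite-dimensional robust Bellman operator
\[
\check{\cal H}Q(\check\mu,\check\pi):=\overline r(\check\mu\hatotimes\check\pi)+\beta\inf_{p\in{\cal B}^0_{m,q}(\widehat p_{\varepsilon^0})}\int_{E^0}\max_{\check\pi'\in\check\Pi}Q\big(\operatorname{pj}_{\check S}(\overline{\operatorname{F}}(\check\mu\hatotimes\check\pi,e^0)),\check\pi'\big)\,p(de^0).
\]
Since both $\inf_p\int\cdot\,dp$ and $\max_{\check\pi'}$ are $1$-Lipschitz in sup-norm, $\check{\cal H}$ is a $\beta$-contraction on $\mathbb{R}^{\check S\times\check\Pi}$, so $\check Q^*$ exists and is unique. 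The triangle inequality then gives
\[
|\check Q_t(\operatorname{pj}_{\check S}(\mu),\operatorname{pj}_{\check\Pi}(\pi))-Q^*(\mu,\pi)|\le\|\check Q_t-\check Q^*\|_\infty+|\check Q^*(\check\mu,\check\pi)-Q^*(\mu,\pi)|,
\]
with $(\check\mu,\check\pi)$ the projections. It therefore suffices to show two things: first, $\check Q_t\to\check Q^*$ almost surely for both learning-rate schemes; second, the deterministic bound $|\check Q^*(\check\mu,\check\pi)-Q^*(\mu,\pi)|\le C_1\varepsilon_{\check S}+C_2\varepsilon_{\check A}$.

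\textbf{Discretization step.} Split $|\check Q^*(\check\mu,\check\pi)-Q^*(\mu,\pi)|\le|Q^*(\check\mu,\check\pi)-Q^*(\mu,\pi)|+\|Q^*|_{\check S\times\check\Pi}-\check Q^*\|_\infty$.

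For the first term, use the Lipschitz properties in Remark~\ref{rem:regular_lift}, the $L^*$-Lipschitz property of $v^*$ from Proposition~\ref{pro:LNP2025}, and the fact that $\inf_p\int\cdot\,dp$ is $1$-Lipschitz in sup-norm. This gives $|Q^*(\mu,\pi)-Q^*(\mu',\pi')|\le 2(C_r+\beta L^*C_{\operatorname{F}})W_1(\mu\hatotimes\pi,\mu'\hatotimes\pi')$. We then need the key estimate
\[
W_1(\mu\hatotimes\pi,\check\mu\hatotimes\check\pi)\le(1+\Delta_A\underline\Delta_S^{-1})W_1(\mu,\check\mu)+d_\Pi(\pi,\check\pi).
\]
It follows by coupling the $S$-marginals optimally. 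Mass that stays at the same $s$ is coupled using the optimal coupling of $\pi(\cdot|s)$ and $\check\pi(\cdot|s)$, costing at most $d_\Pi$. Mass that moves between distinct states pays at most $|s-s'|+\Delta_A\le(1+\Delta_A\underline\Delta_S^{-1})|s-s'|$, because $|s-s'|\ge\underline\Delta_S$.

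For the second term, compare the two fixed points. Using $v^*=\sup_\pi Q^*(\cdot,\pi)$, we get
\[
\big|\sup_{\pi}Q^*(\nu,\pi)-\max_{\check\pi'}Q^*(\operatorname{pj}_{\check S}\nu,\check\pi')\big|\le L^*\varepsilon_{\check S}+2(C_r+\beta L^*C_{\operatorname{F}})\varepsilon_{\check A}.
\]
Here the $L^*\varepsilon_{\check S}$ comes from $v^*$ being $L^*$-Lipschitz, and the $\varepsilon_{\check A}$ term from approximating any $\pi$ by some $\check\pi$ with $d_\Pi(\pi,\check\pi)\le\varepsilon_{\check A}$. The contraction of $\check{\cal H}$ then yields a bound of order $\frac{\beta}{1-\beta}(\cdot)$. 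Collecting constants reproduces $C_1$ and $C_2$; I expect some slack in matching the constants exactly.

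\textbf{Stochastic approximation step.} Write the update \eqref{eq:asynchro_Q1} as $\check Q_{t+1}=(1-\alpha_t)\check Q_t+\alpha_t(\check{\cal H}\check Q_t+w_t)$. By Lemma~\ref{lem:lctransform} and the identity \eqref{eq:dual_Phi}, with $\lambda^*_t$ optimal in \eqref{eq:defconjugateJ}, we have
\[
\mathbb{E}\big[-(-J_t)^{\lambda^*_t}(\varepsilon^0_{t+1})-m^q\lambda^*_t\,\big|\,{\cal F}_t\big]=\phi_t(\lambda^*_t)=\Phi_t .
\]
Hence $w_t$ is a martingale difference, since $\lambda^*_t$ and $J_t$ are ${\cal F}_t$-measurable and $\varepsilon^0_{t+1,\cdot}$ is independent of ${\cal F}_t$. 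To bound its conditional variance uniformly, use the uniform bounds from Lemma~\ref{lem:Q_est} under Assumption~\ref{as:Qlearning}(i): $|\check Q_t|\le\check Q_{\max}$ and $\lambda^*_t$ bounded. These also guarantee that $\lambda^*_t$ exists, which is the main obstacle of this step. The route there is to show that $\phi_t(\lambda)\to-\infty$ as $\lambda\to\infty$ when $m>0$, which gives a bound of the form $\lambda^*\le 2\|J_t\|_\infty/m^q$, and then to control $\|J_t\|$ inductively via $\beta<1/3$.

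Given these bounds, apply \cite[Theorem~1]{6796861}. Its step-size conditions $\sum\alpha=\infty$ and $\sum\alpha^2<\infty$ hold for $w\in(\tfrac12,1)$. In the asynchronous case they hold at each pair, because Assumption~\ref{as:Qlearning}(ii) forces infinitely many visits, with $N_t\ge\lfloor t/T_{\operatorname{cov}}\rfloor$. Together with the $\beta$-contraction of $\check{\cal H}$, that theorem gives $\check Q_t\to\check Q^*$ $\widehat{\mathbb{P}}^0$-a.s., and combining with the discretization bound yields \eqref{eq:Qconverge}.
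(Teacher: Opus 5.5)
Your proposal is correct and follows essentially the same route as the paper. It shares the split through the discretized fixed point $\check Q^*$ (Lemmas~\ref{lem:estimateQ_pre}, \ref{lem:liftedoptimalQ}, \ref{lem:proj_errrors}), the dual-based martingale-difference and $\beta<\frac13$ uniform-bound arguments (Lemmas~\ref{lem:Q_est}, \ref{lem:mtg_z}), and the application of \cite[Theorem~1]{6796861}; your collected constants also reproduce $C_1$ and $C_2$ exactly, with no slack.
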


\begin{remark}\label{rem:proj_error}
	The error term ``$C_1\varepsilon_{\check S}+{C_2}\varepsilon_{\check A}$'' appearing in~\eqref{eq:Qconverge} does not arise from the stochastic iteration of the $Q$-learning algorithm itself, but rather from the quantization and projection procedure in Definition~\ref{dfn:projectedmaps} used to construct the discretized approximation of the optimal $Q$-function. The corresponding projection and discretization error analysis is provided in Section~\ref{sec:DiscretizedQ}, particularly in Lemma~\ref{lem:proj_errrors}.
\end{remark}

We next present the finite-time iteration bound analysis for our $Q$-learning algorithm. %
\begin{theorem}\label{thm:asynchro_rate}
	Suppose that Assumptions \ref{as:MFC}\;(i),(ii) and \ref{as:Qlearning}\;(i) are satisfied. %
	Then the following statements hold for any $\widehat \varepsilon>0$ and $\widehat\delta \in(0,1)$.
	\begin{itemize}
		\item [(i)] There exists an iteration number $T^*\in \mathbb{N}$ of order\footnote{We denote by $O(\cdot)$ the Landau symbol.}
		      \begin{align*}
			      \quad \qquad O \bigg(\bigg[\Big(\frac{\check Q_{\operatorname{max}}}{\widehat \varepsilon}\Big)^2\max\bigg\{\log\Big(\frac{\check Q_{\operatorname{max}}}{\widehat \varepsilon}\Big),\log\bigg(\frac{|\check S||\check A|^{|\check S|}}{\widehat \delta}\log\Big(\frac{\check Q_{\operatorname{max}}}{\widehat \varepsilon}\Big)\bigg)\bigg\}\bigg]^{\frac{1}{w}}+ \bigg[\log\Big(\frac{\check Q_{\operatorname{max}}}{\widehat \varepsilon}\Big)\bigg]^{\frac{1}{1-w}} \bigg),
		      \end{align*}
		      such that %
		      the synchronous version of the $Q$-learning algorithm in Framework~\ref{frame:Qlearning}\;(i),(iii) satisfies, for every $T\geq T^*$,
		      \begin{align}\label{eq:Qconverge2}
			      \widehat{\mathbb{P}}^0\bigg(\bigg\{\sup_{(\mu,\pi)\in \overline S\times \Pi}\Big|\check Q_{T}(\operatorname{pj}_{\check S}(\mu),\operatorname{pj}_{\check \Pi}(\pi))- Q^*(\mu,\pi)\Big|\leq C_1\varepsilon_{\check S}+{C_2}\varepsilon_{\check A}+\widehat\varepsilon\bigg\}\bigg)\geq 1-\widehat \delta,
		      \end{align}
		      with the constants $C_1$ and $C_2$ given in Theorem \ref{thm:asynchro}.
		\item [(ii)] Moreover, if Assumption \ref{as:Qlearning}\;(ii) is satisfied and we let
		      \begin{align}\label{eq:order_parameter}
			      \check N_1:={T_{\operatorname{cov}}^{1+5w}\check Q_{\operatorname{max}}^2},\quad \check N_2:= {T_{\operatorname{cov}}\check Q_{\operatorname{max}}},\quad  \check N_3:=T_{\operatorname{cov}}|\check S||\check A|^{|\check S|}, %
		      \end{align}
		      then there exists an iteration number $T^*\in \mathbb{N}$ of order
		      \begin{align*}
			      \qquad \quad  O\bigg(\bigg[\frac{\check N_1}{\widehat \varepsilon^2}\max\bigg\{\log\Big(\frac{\check N_2}{\widehat \varepsilon}\Big),\log\bigg(\frac{\check N_3}{\widehat \delta}\log\Big(\frac{\check Q_{\operatorname{max}}}{\widehat \varepsilon }\Big)\bigg)\bigg\}\bigg]^{\frac{1}{w}}+ \bigg[T_{\operatorname{cov}}\log\Big(\frac{ \check Q_{\operatorname{max}}}{\widehat \varepsilon}\Big) \bigg]^{\frac{1}{1-w}} \bigg),
		      \end{align*}
		      such that %
		      the asynchronous version in Framework~\ref{frame:Qlearning}\;(ii),(iii) also satisfies \eqref{eq:Qconverge2} for every $T\geq T^*$.
	\end{itemize}
\end{theorem}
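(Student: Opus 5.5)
We split the error into a discretization part and a stochastic-approximation part. We introduce the discretized optimal $Q$-function $\check Q^*$ on $\check S\times\check\Pi$ as the unique fixed point of the finite robust Bellman operator
\[
\check{\cal H}Q(\check\mu,\check\pi):=\overline r(\check\mu\hatotimes\check\pi)+\beta\inf_{p\in{\cal B}^0_{m,q}(\widehat p_{\varepsilon^0})}\int_{E^0}\max_{\check\pi'\in\check\Pi}Q\big(\operatorname{pj}_{\check S}(\overline{\operatorname{F}}(\check\mu\hatotimes\check\pi,e^0)),\check\pi'\big)p(de^0).
\]
This operator is a $\beta$-contraction in sup norm, because $\inf_p\int$ and $\max$ are both $1$-Lipschitz. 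The supremum over $(\mu,\pi)$ in \eqref{eq:Qconverge2} is then bounded by
\[
\max_{\check S\times\check\Pi}|\check Q_T-\check Q^*|+\sup_{(\mu,\pi)}|\check Q^*(\operatorname{pj}_{\check S}\mu,\operatorname{pj}_{\check\Pi}\pi)-Q^*(\mu,\pi)|.
\]
The second term is the deterministic projection error $C_1\varepsilon_{\check S}+C_2\varepsilon_{\check A}$, which is what the proof of Theorem~\ref{thm:asynchro} already supplies (Lemma~\ref{lem:proj_errrors}). It uses Proposition~\ref{pro:LNP2025} (Lipschitz $v^*$ with constant $L^*$), Remark~\ref{rem:regular_lift}, and the estimate $W_1(\mu\hatotimes\pi,\check\mu\hatotimes\check\pi)\lesssim(1+\Delta_A\underline\Delta_S^{-1})\varepsilon_{\check S}+\varepsilon_{\check A}$. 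It therefore remains to show that $\max|\check Q_T-\check Q^*|\le\widehat\varepsilon$ with probability at least $1-\widehat\delta$ for $T\ge T^*$.

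For this we follow \cite{even2003learning}. Write the update as
\[
\check Q_{t+1}=(1-\alpha_t)\check Q_t+\alpha_t\big(\check{\cal H}\check Q_t+w_t\big),
\qquad
w_t:=\overline r+\beta\big[-(-J_t)^{\lambda^*_t}(\varepsilon^0_{t+1})-m^q\lambda^*_t\big]-\check{\cal H}\check Q_t .
\]
The key structural fact is that $\lambda^*_t$ and $J_t$ are ${\cal F}_t$-measurable and $\varepsilon^0_{t+1,(\cdot)}$ is independent of ${\cal F}_t$ with law $\widehat p_{\varepsilon^0}$. Hence $\mathbb E[w_t\mid{\cal F}_t]=\beta(\phi_t(\lambda^*_t)-\Phi_t)=0$ by optimality of $\lambda^*_t$ and Remark~\ref{rem:dual_Phi}(ii), so $(w_t)$ is a martingale difference sequence. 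Lemma~\ref{lem:Q_est} gives the uniform bounds $|\check Q_t|\le\check Q_{\max}$ and a uniform bound on $\lambda^*_t$, which together give $|w_t|\lesssim\check Q_{\max}$. This is where $\beta<1/3$ and $|\check Q_0|\le\check Q_{\max}$ enter.

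Next we run the epoch argument. Let $D_0=\check Q_{\max}$ (up to a constant) and $D_{k+1}=\beta' D_k$ with some $\beta'\in(\beta,1)$, say $(1+\beta)/2$. Suppose that after time $\tau_k$ we have $\|\check Q_t-\check Q^*\|\le D_k$. We decompose $\check Q_t-\check Q^*$ into a deterministic contraction part, bounded by $\beta D_k$ plus a decaying term, and a noise part $W_{\tau_k,t}=\sum_{\ell}\alpha_\ell\prod(1-\alpha)\,w_\ell$. The noise part is a weighted martingale whose squared weights sum to $O(\tau_k^{-w})$. Azuma--Hoeffding \cite{azuma1967weighted}, with a union bound over $|\check S||\check A|^{|S|}$ pairs, times, and $O(\log(\check Q_{\max}/\widehat\varepsilon))$ epochs, gives $|W|\le\epsilon D_k$ with high probability once
\[
\tau_k^w\gtrsim\frac{\check Q_{\max}^2}{D_k^2}\,\log\!\Big(\frac{\text{\#pairs}\cdot\text{\#epochs}}{\widehat\delta}\Big).
\]
The deterministic decay of $\prod(1-\alpha_\ell)$ requires epoch lengths with $\tau_{k+1}\approx\tau_k+\tau_k^w$. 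Iterating this up to $D_k\le\widehat\varepsilon$ yields the stated bound: the $(\cdot)^{1/w}$ term comes from the final-epoch noise requirement, and the $(\log)^{1/(1-w)}$ term comes from summing the epochs.

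For the asynchronous case we repeat the argument with "time" replaced by visit counts. By Assumption~\ref{as:Qlearning}(ii), every pair is updated at least once per $T_{\operatorname{cov}}$ steps, so $N_{t,(\check\mu,\check\pi)}\ge t/T_{\operatorname{cov}}-1$. Epochs are then lengthened by factors of $T_{\operatorname{cov}}$, which produces $\check N_1,\check N_2,\check N_3$, including the extra $T_{\operatorname{cov}}^{5w}$ from comparing asynchronous learning rates across a window.

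The main obstacle is the martingale step. We must verify the uniform bounds on $\lambda^*_t$ and $\check Q_t$ (via Lemma~\ref{lem:Q_est}) so that $w_t$ is bounded almost surely. We must also check that plugging the ${\cal F}_t$-measurable optimizer $\lambda^*_t$ into the sampled $\lambda c$-transform gives exactly zero conditional mean. This is the one point where the Wasserstein duality (Lemma~\ref{lem:lctransform}) and the independent-sampling design are essential.
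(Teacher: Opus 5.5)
Your proposal follows essentially the same route as the paper. The common steps are the split via Lemma~\ref{lem:proj_errrors}, the zero conditional mean of $\beta Z_{t+1,(\check\mu,\check\pi)}$ obtained from the Wasserstein duality and independent sampling, and the uniform bounds of Lemma~\ref{lem:Q_est}. The epoch argument also matches: Even-Dar--Mansour epochs with $D_{k+1}=\tfrac{1+\beta}{2}D_k$, a deterministic/martingale ($Y$/$W$) decomposition, Azuma plus a union bound, and the covering-time adaptation for the asynchronous case.

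The one step you leave implicit is that \eqref{eq:Qconverge2} must hold for \emph{every} $T\ge T^*$. Iterating epochs until $D_k\le\widehat\varepsilon$ only controls $T$ in the last window $[\tau_{n^*},\tau_{n^*+2})$, and simply continuing with $D_k\to 0$ breaks the union bound, because the Azuma exponent scales like $D_k^2\tau_k^w$ and this tends to $0$ when $D_k$ decays geometrically while $\tau_k^w$ grows only polynomially in $k$. The paper fixes this by shifting the initial epoch length $\check t\ge\check t^*$ so that these windows overlap and cover $[T^*,\infty)$ (Lemma~\ref{lem:perturb_init} and Step~3); you need this or an equivalent device, e.g.\ freezing $D_k$ at the target level once it is reached.
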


\begin{remark}
	Theorem~\ref{thm:asynchro_rate} presents a finite-time high-probability convergence estimate for the $Q$-learning algorithm in Framework~\ref{frame:Qlearning}. More precisely, for any prescribed error and confidence levels, one can construct an iteration number \(T^*\) of the stated order such that, for every \(T\geq T^*\), the corresponding \(Q\)-learning iterate \(\check Q_T\) has error bounded by the prescribed error plus the discretization error \(C_1\varepsilon_{\check S}+C_2\varepsilon_{\check A}\) (see also Remark~\ref{rem:proj_error}) with at least the prescribed confidence~level.

	For the asynchronous $Q$-learning case, Theorem~\ref{thm:asynchro_rate} can also be read as a finite-sample complexity bound, since each iteration updates one state and action pair along the pre-sampled, projected trajectory, so that the iteration bound is equivalently a bound on the number of sampled updates needed to reach the stated accuracy. Related non-asymptotic sample-complexity analyses for asynchronous $Q$-learning algorithms are presented in~\cite{qu2020finite,li2020sample,li2024settling,kearns1998finite,li2024breaking}. %
\end{remark}

The following corollary reformulates Theorem~\ref{thm:asynchro_rate} in terms of the convergence rate with respect to the iteration number \(T\). %
\begin{cor}\label{cor:Qlearningrate} Suppose that Assumptions \ref{as:MFC}\;(i),(ii) and \ref{as:Qlearning}\;(i) are satisfied. Then the synchronous version of the
	$Q$-learning algorithm in Framework~\ref{frame:Qlearning}\;(i),(iii) satisfies,\footnote{We denote by $O_{\widehat{\mathbb{P}}^0}(\cdot)$ the Landau symbol in $\widehat{\mathbb{P}}^0$-probability; that is, for a sequence of random variables $(X_n)_{n \geq 1}$ and a deterministic sequence $(a_n)_{n \geq 1}$, we write $X_n = O_{\widehat{\mathbb{P}}^0}(a_n)$ as $n\to\infty$, if for every $\widehat \delta > 0$, there exists a finite $M > 0$ and $N\in \mathbb{N}$ such that \(
		\widehat{\mathbb{P}}^0(|\frac{X_n}{a_n}| > M) \leq \widehat \delta\)
		for every $n\geq N$.} as $T\to \infty$,
	\begin{align}\label{eq:Qconverge3}
		\sup_{(\mu,\pi)\in\overline S\times\Pi}
		\left|
		\check Q_T(\operatorname{pj}_{\check S}(\mu),
		\operatorname{pj}_{\check\Pi}(\pi))
		-
		Q^*(\mu,\pi)
		\right|
		\leq
		C_1\varepsilon_{\check S}
		+
		C_2\varepsilon_{\check A}+O_{\widehat{\mathbb P}^0}\bigg(
		\sqrt{\frac{\log T}{T^w}}
		\bigg),
	\end{align}
	with the constants $C_1$ and $C_2$ given in Theorem \ref{thm:asynchro}.
	Moreover, if Assumption \ref{as:Qlearning}\;(ii) is satisfied, then the asynchronous version in Framework~\ref{frame:Qlearning}\;(ii),(iii) also satisfies \eqref{eq:Qconverge3}.
\end{cor}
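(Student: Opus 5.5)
We derive Corollary~\ref{cor:Qlearningrate} from Theorem~\ref{thm:asynchro_rate} by inverting the iteration bound: for each confidence level $\widehat\delta$ we find an accuracy $\widehat\varepsilon_T$, decaying like $\sqrt{\log T/T^w}$, that is attainable after $T$ iterations. Write
\[
X_T:=\Big(\sup_{(\mu,\pi)}|\check Q_T(\operatorname{pj}_{\check S}(\mu),\operatorname{pj}_{\check\Pi}(\pi))-Q^*(\mu,\pi)|-C_1\varepsilon_{\check S}-C_2\varepsilon_{\check A}\Big)^+ .
\]
The claim \eqref{eq:Qconverge3} means $X_T=O_{\widehat{\mathbb P}^0}(a_T)$ with $a_T:=\sqrt{\log T/T^{w}}$. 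So, for fixed $\widehat\delta\in(0,1)$, we must find $M>0$ and $N\in\mathbb N$ with $\widehat{\mathbb P}^0(X_T>Ma_T)\le\widehat\delta$ for all $T\ge N$.

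Fix $\widehat\delta$ and set $\widehat\varepsilon:=Ma_T$ with $M$ to be chosen. We apply Theorem~\ref{thm:asynchro_rate}\,(i) with this $\widehat\varepsilon$. Since $\widehat\varepsilon$ now depends on $T$, we need the explicit form behind the $O(\cdot)$: there is a constant $K$, depending only on the model data ($\check Q_{\max}$, $|\check S|$, $|\check A|$, $w$) and not on $\widehat\varepsilon$ or $\widehat\delta$, such that $T^*(\widehat\varepsilon,\widehat\delta)$ is at most
\[
K\Big[\widehat\varepsilon^{-2}\big(\log(1/\widehat\varepsilon)+\log(1/\widehat\delta)+\log\log(1/\widehat\varepsilon)+1\big)\Big]^{1/w}+K\big[\log(1/\widehat\varepsilon)+1\big]^{1/(1-w)},
\]
for $\widehat\varepsilon$ small. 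The key observation is that Theorem~\ref{thm:asynchro_rate} holds for every $T\ge T^*$, so it suffices to check that $T\ge T^*(Ma_T,\widehat\delta)$ for all large $T$.

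We verify this term by term. Substituting $\widehat\varepsilon=Ma_T$ gives $\widehat\varepsilon^{-2}=T^w/(M^2\log T)$ and $\log(1/\widehat\varepsilon)\le \tfrac w2\log T$ for large $T$. The bracket in the first term is therefore at most
\[
\frac{T^w}{M^2\log T}\Big(\tfrac w2\log T+\log(1/\widehat\delta)+O(\log\log T)\Big)\le \frac{T^w}{M^2}\big(1+o(1)\big).
\]
Raising to the power $1/w$ bounds the first term by $K M^{-2/w}T(1+o(1))$, which is at most $T/2$ once $M^{2/w}>4K$ and $T$ is large. The second term is $O\big((\log T)^{1/(1-w)}\big)=o(T)$, so it is also at most $T/2$ for large $T$. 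Hence $T\ge T^*$ for all $T\ge N$, and \eqref{eq:Qconverge2} gives $\widehat{\mathbb P}^0(X_T>Ma_T)\le\widehat\delta$. Note that $M$ can be chosen independently of $\widehat\delta$, because the $\widehat\delta$-dependence only enters through the lower-order term $\log(1/\widehat\delta)/\log T\to0$. The asynchronous case follows identically from Theorem~\ref{thm:asynchro_rate}\,(ii): $T_{\rm cov}$ enters only through the fixed constants $\check N_1,\check N_2,\check N_3$ and the second term $[T_{\rm cov}\log(\check Q_{\max}/\widehat\varepsilon)]^{1/(1-w)}=o(T)$.

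The main obstacle is that the $O(\cdot)$ in Theorem~\ref{thm:asynchro_rate} must be read with an implicit constant that is uniform in $(\widehat\varepsilon,\widehat\delta)$, with $\widehat\varepsilon,\widehat\delta$ entering only through the displayed expressions. I would justify this by going back to the proof of Theorem~\ref{thm:asynchro_rate}, where $T^*$ is constructed explicitly via Azuma-type bounds. From that construction I would extract the concrete inequality above with explicit constants, in particular handling the nested $\log\log(1/\widehat\varepsilon)$ term, which the argument shows is negligible.
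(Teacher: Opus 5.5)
Your argument is correct and matches the paper's proof. Both read the $O(\cdot)$ in Theorem~\ref{thm:asynchro_rate} with a constant uniform in $\widehat\varepsilon$, extracted from the construction of $n^*$ and $\check t^*$; both substitute $\widehat\varepsilon=\mathrm{const}\cdot\sqrt{\log T/T^w}$, check that $T\ge T^*(\widehat\varepsilon,\widehat\delta)$ for all large $T$, and then use the ``for every $T\ge T^*$'' form of the theorem. The only difference is bookkeeping: the paper absorbs $\widehat\delta$ into a constant $K_{\widehat\delta}$ and sets $C_{\widehat\delta}=K_{\widehat\delta}^{w/2}$, whereas you keep $\log(1/\widehat\delta)$ explicit, which lets you take $M$ independent of $\widehat\delta$.
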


\section{Numerics}\label{sec:numerics}
In this section we illustrate the robust $Q$-learning algorithm on three
finite-grid mean-field control examples. 
\footnote{The code is available at \url{https://github.com/mlauriere/RobustMeanFieldControlQLearning}.}

\subsection{Numerical Setup}\label{subsec:numerics-setup}

The computations use the projected finite state and policy spaces from
Definition~\ref{dfn:projectedmaps}.  The runs of Algorithm~\ref{alg:Qlearning}
use the asynchronous branch, namely the \textsc{else} part of the algorithm,
corresponding to Framework~\ref{frame:Qlearning}\;(ii),(iii), with learning
rates as in \eqref{eq:learningrate}.  The robust targets are computed using the
common-noise dual in Lemma~\ref{lem:lctransform} and
\eqref{eq:defconjugateJ}; in particular the transportation cost is on
$E^0$.  In all experiments
reported below, we set $q=1$ and $|E^0|=3$, and the reference common-noise law is
\[
	\widehat p_{\varepsilon^0}=(0.1,0.8,0.1).
\]
The pre-sampled lifted state--policy pairs
$(\check\mu_t,\check\pi_t)_{t\ge 0}$ used by the asynchronous implementation
are generated by repeated random permutations of $\check S\times\check\Pi$.
Hence the visitation pattern is independent of the current $\check Q$ table,
and one independent common-noise realization is drawn at each update.  The
dashed reference curves in Figures \ref{fig:numerics-sysrisk-profile} and \ref{fig:numerics-sis-profile} are not produced by a model-free algorithm.  They
are obtained by deterministic Bellman iteration for the finite projected
mean-field control problem on $\check S\times\check\Pi$. %
Below, for radius $m$, the resulting finite-grid fixed point is denoted by $\check Q^{*,m}$, while the iterated $Q$-function at iteration $T$ is denoted by $\check Q_T^{m}$. The Bellman-iteration procedure to obtain $\check{Q}^{*,m}$ requires the knowledge of the transition and reward functions and hence does not form a reinforcement algorithm. However, we use this methodology as benchmark to compare it with the outcome of our robust $Q$-learning algorithm.

In the examples below, $S_{\rm sys}$,
$S_{\rm SIS}$, and $S_{\rm SEIR}$ denote concrete individual state spaces
$S$ in the notation of Section~\ref{sec:Qlearning}; these should be
distinguished from the finite projected lifted-state grid $\check S$.

Robustness profiles are evaluated under perturbed common-noise laws
\[
	p_{\rm eval}^{\zeta}=(1-\zeta)\widehat p_{\varepsilon^0}
	+\zeta\,p_{\rm adv},
	\qquad \zeta\in\{0,0.125,\ldots,1\},
\]
where $\zeta=0$ is the reference law and $\zeta=1$ is an example-specific
adverse endpoint used only for evaluation.  The law $p_{\rm adv}$ is not a
new training distribution; it is a plotting device that concentrates mass on
an unfavorable common-noise state so that increasing $\zeta$ produces a
controlled misspecification of the reference law.  We interpret $ p_{\rm eval}^{\zeta}$ as the true law for the common noise, which is, however, unknown to the agent, forcing him to optimize robustly. If $\zeta = 0$, then the reference law $\widehat p_{\varepsilon^0}$ coincides with the true law $ p_{\rm eval}^{\zeta}$, whereas if $\zeta > 0$, then there is some model misspecification.

The plotted quantities in Figures \ref{fig:numerics-sysrisk-profile} and \ref{fig:numerics-sis-profile} are
discounted rewards of the greedy
policy induced by the computed $\check Q$ table.  For each fixed value of $\zeta$, this is an ordinary evaluation under the fixed common-noise law $p_{\rm eval}^{\zeta}$, not an additional robust infimum over laws.  Since the primitive dynamics are given by the map $\overline{\operatorname{F}}$, the law $p_{\rm eval}^{\zeta}$ enters only through the finite average over common-noise values in the display below.  For a fixed evaluation law
$p_{\rm eval}^{\zeta}$ and greedy finite-grid policy
$\check\pi_{\check Q}(\check\mu)\in\operatorname*{arg\,max}_{\check\pi\in\check\Pi}
	\check Q(\check\mu,\check\pi)$, this reward is
computed by solving the finite linear system
\[
	V_{\check Q}(\check\mu)= \overline r(\check\mu\hatotimes\check\pi_{\check Q}(\check\mu)) +
	\beta\sum_{e^0\in E^0} p_{\rm eval}^{\zeta}(e^0)
	V_{\check Q}\big(\operatorname{pj}_{\check S}(
	\overline{\operatorname{F}}(\check\mu\hatotimes
	\check\pi_{\check Q}(\check\mu),e^0))\big),
	\quad \check\mu\in\check S,
\]
and averaging $V_{\check Q}$ with respect to the uniform measure on the finite
projected state grid $\check S$.
Larger plotted values in Figures \ref{fig:numerics-sysrisk-profile} and \ref{fig:numerics-sis-profile} therefore correspond to better performance. In Figures \ref{fig:numerics-sysrisk-profile} and~\ref{fig:numerics-sis-profile}, solid curves show means over $20$ independent
seeds of the asynchronous implementation of Algorithm~\ref{alg:Qlearning},
and dashed curves show the deterministic finite-grid Bellman references.

Table~\ref{tab:numerics-config} summarizes the finite-grid configurations.
We use
\[
	\begin{aligned}
		{\cal R}_{\rm sys}
		 & =\{0,0.05,0.1,0.2,0.3,0.4,0.5,0.6,0.75,1\}, \\
		{\cal R}_{\rm epi}
		 & =\{0,0.005,0.01,0.02,0.03,0.04,0.05,0.06,
		0.08,0.1,0.15,0.2,0.3,0.5,0.75,1\}.
	\end{aligned}
\]
Here ${\cal R}_{\rm sys}$ and ${\cal R}_{\rm epi}$ denote the choices for Wasserstein robustness radii $m$ used in the robustness-profile experiments.
The construction of $\check S$ and $\check \Pi$ is explained in each example.
\begin{table}[htbp]
	\centering
	\begin{tabular}{lcccccc}
		\toprule
		\addlinespace[0.35em]
		Example       & $\beta$         & $|\check S|$ & $|\check\Pi|$
		              & Updates per $m$ & $w$          & Robustness radii $m$ \\
		\midrule
		Systemic Risk & $0.5$           & $21$         & $216$
		              & $5{,}000{,}000$ & $0.75$       & ${\cal R}_{\rm sys}$ \\
		SIS           & $0.5$           & $13$         & $11$
		              & $1{,}000{,}000$ & $0.7$        & ${\cal R}_{\rm epi}$ \\
		SEIR          & $0.9$           & $165$        & $11$
		              & $3{,}000{,}000$ & $0.7$        & ${\cal R}_{\rm epi}$ \\
		\bottomrule
	\end{tabular}
	\caption{Finite-grid configurations for the asynchronous robustness
		profiles.}
	\label{tab:numerics-config}
\end{table}
\begin{remark}
	The finite examples satisfy the finiteness and boundedness requirements by
	construction.  The lifted rewards and lifted transition maps used below are
	Lipschitz on the relevant finite-dimensional simplices, with clipping preserving
	Lipschitz continuity.  The asynchronous data schedule is generated by repeated
	random permutations of $\check S\times\check\Pi$, hence the covering-time
	condition in Assumption~\ref{as:Qlearning}\;(ii) holds with
	$T_{\operatorname{cov}}=|\check S||\check\Pi|$.  The discount-factor condition
	in Assumption~\ref{as:Qlearning}\;(i), however, is a sufficient condition for
	the theorem and is not imposed in all numerical runs, in particular in the main
	SEIR experiment where $\beta=0.9$.  Thus the main numerical results should be
	read as empirical finite-grid evidence for Algorithm~\ref{alg:Qlearning},
	while Figure~\ref{fig:numerics-q-convergence} illustrates the rate form in
	Corollary~\ref{cor:Qlearningrate}.
\end{remark}

\noindent {\bf $Q$-function convergence.}
Figure~\ref{fig:numerics-q-convergence} reports the convergence speed of
the asynchronous $\check Q$ iterates on the same finite grids used in the robustness
profiles.  For each displayed robustness radius $m$, let $\check Q^*_m$ denote the
corresponding idealized finite-grid fixed point.  At iteration time $T$, the
plotted error is
\[
	E_T(m)=\|\check Q_T^{m}-\check Q^{*,m}\|_{\check S\times \check \Pi}
	=
	\max_{(\check\mu,\check\pi)\in\check S\times\check\Pi}
	|\check Q_T^m(\check\mu,\check\pi)-\check Q^{*,m}(\check\mu,\check\pi)|.
\]
The line is the median over $10$ seeds and the band is the $10$--$90$
percentile range.  The black dashed guide is proportional to
$\sqrt{\log(T)/T^{w}}$, with the learning-rate exponent $w$ from
Table~\ref{tab:numerics-config}.  This matches the rate form in
Corollary~\ref{cor:Qlearningrate} up to constants and discretization error.

\begin{figure}[htbp]
	\centering
	\includegraphics[width=\textwidth]{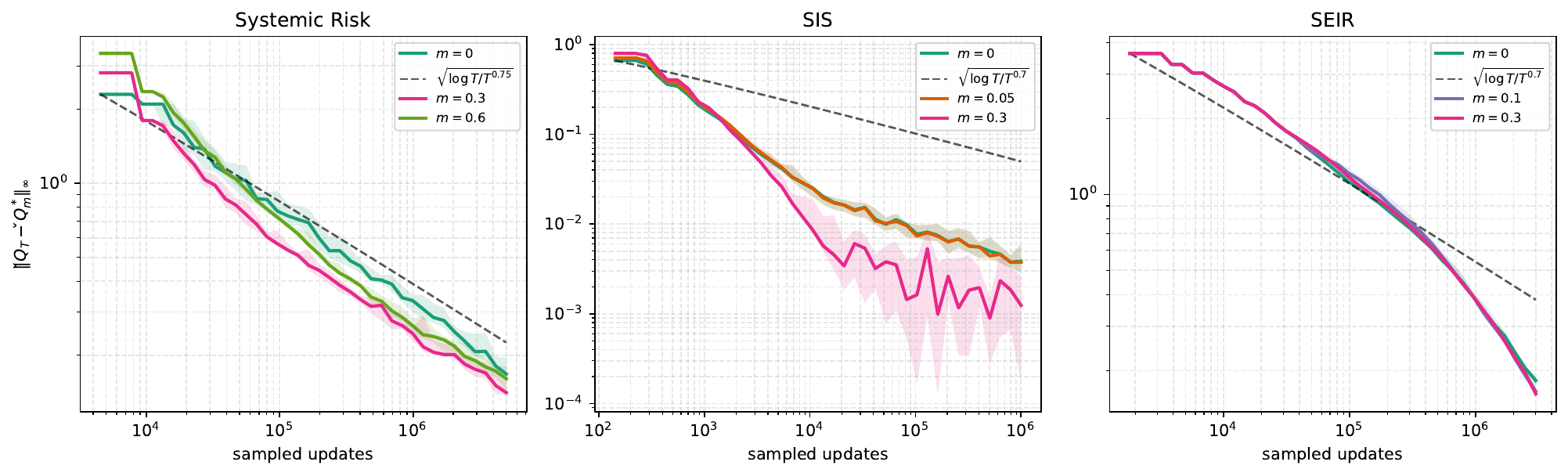}
	\caption{\textbf{Asynchronous $Q$-function convergence.} Error $E_T(m)$ against the idealized finite-grid fixed point for
		selected robustness radii.}
	\label{fig:numerics-q-convergence}
\end{figure}

\subsection{Systemic Risk}\label{subsec:numerics-systemic-risk}

This example is a stylized finite-state model of a population of financial
institutions whose capital levels are affected by individual controls and
aggregate shocks.
The individual state, action, and common-noise spaces are
\[
	S_{\rm sys}=\{0,1,2\},\qquad
	A=\{-1,0,1\},\qquad
	E^0=\{-1,0,1\}.
\]
The states $0,1,2$ represent distressed, normal, and well-capitalized
agents.  Given individual state $x$, action $a$, and common shock $e^0$,
the next state is
\[
	x'=\min\{2,\max\{0,x+a+e^0\}\}.
\]
There is no idiosyncratic noise in this example.  If the current population
distribution is $\mu$ and the finite-grid population policy is $\pi$, the
mean-field transition is the law of $x'$ when $x\sim\mu$ and
$a\sim\pi(\cdot|x)$.  The lifted one-step reward is
\[
	\overline r(\mu\hatotimes\pi)
	=-\|\mu-(0,1,0)\|_2^2-2\mu_0.
\]
where $\mu_0$ is the population mass in the distressed state.  Thus the
planner is rewarded for avoiding both deviation from the normal population
state and mass in the distressed state.  The adverse law is
$p_{\rm adv}=(1,0,0)$, which puts all mass on the negative common shock.

The projected state grid is the uniform probability-simplex grid
\[
	\check S_{\rm sys}
	=
	\left\{
	\left(\frac{k_0}{5},\frac{k_1}{5},\frac{k_2}{5}\right)
	:
	k_0,k_1,k_2\in\mathbb N\cup \{0\},\
	k_0+k_1+k_2=5
	\right\}.
\]
Thus $|\check S_{\rm sys}|=\binom{7}{2}=21$.  The individual action set has
three actions, $A=\{-1,0,1\}$.  We discretize local randomized actions by
\[
	\check A_{\rm sys}
	=
	\left\{
	\left(\frac{\ell_{-1}}{2},\frac{\ell_0}{2},\frac{\ell_1}{2}\right)
	:
	\ell_{-1},\ell_0,\ell_1\in\mathbb N\cup \{0\},\
	\ell_{-1}+\ell_0+\ell_1=2
	\right\},
\]
so $|\check A_{\rm sys}|=\binom{4}{2}=6$.  A finite-grid population policy
assigns one element of $\check A_{\rm sys}$ to each individual state
$x\in S_{\rm sys}$, hence
\[
	\check\Pi_{\rm sys}
	=
	\{\check\pi:S_{\rm sys}\to \check A_{\rm sys}\},
	\qquad
	|\check\Pi_{\rm sys}|=6^3=216.
\]

With the configuration in Table~\ref{tab:numerics-config},
Figure~\ref{fig:numerics-sysrisk-profile} shows a clear hedging effect:
under the adverse law $\zeta=1$, the mean reward from Algorithm~\ref{alg:Qlearning} increases from $-5.119$ at
$m=0$ to $-3.230$ at $m=0.6$, while excessive robustness remains slightly
below the best moderate radius.  Under the reference law, by contrast,
robustness is only mildly useful at very small radius and then lowers reward
as $m$ increases.  The asynchronous and idealized curves are closest for moderate
and large robustness radii; the larger non-robust gap visible at $m=0$
reflects slower finite-time learning in this example.

\begin{figure}[htbp]
	\centering
	\includegraphics[width=0.84\textwidth]{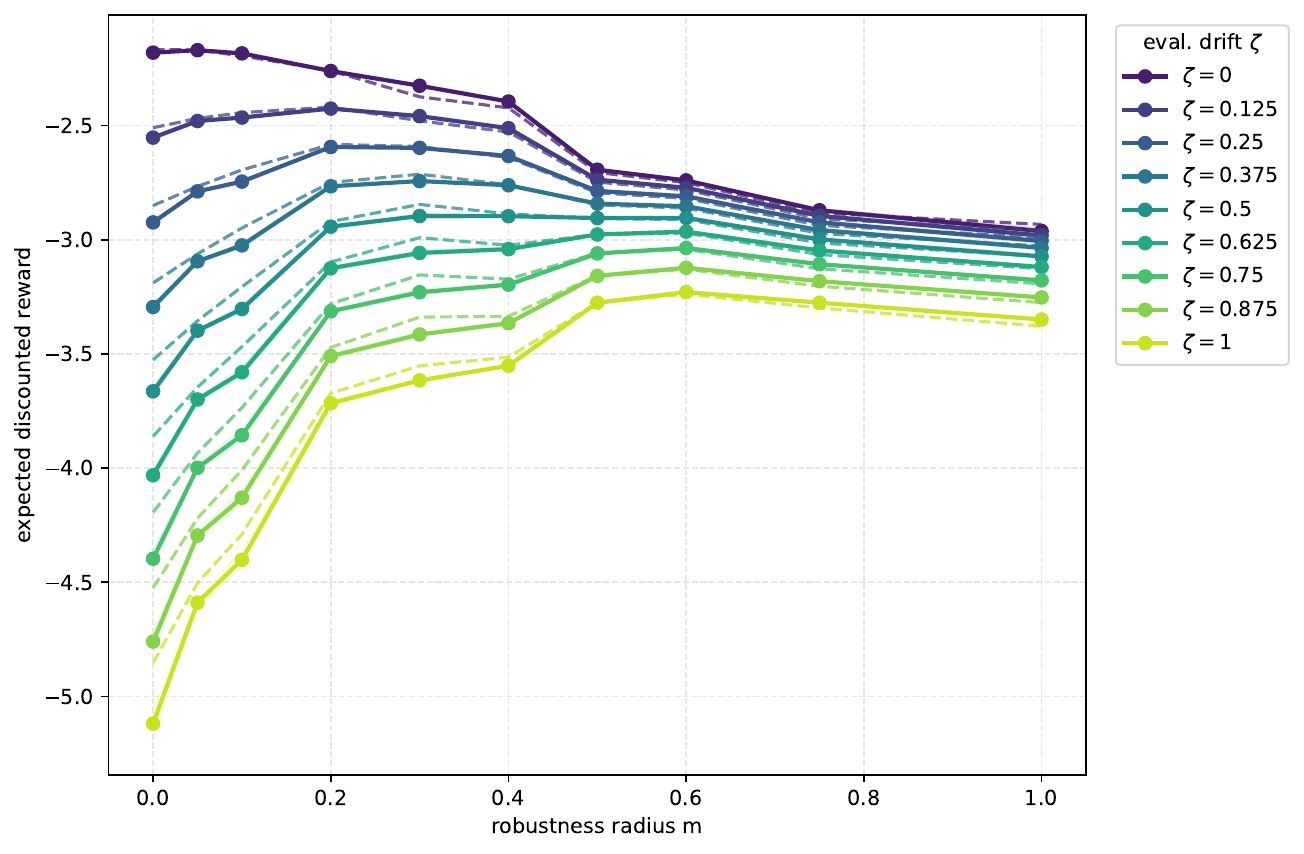}
	\caption{\textbf{Systemic Risk robustness profile.}  Solid curves show
		reward means from the asynchronous implementation of Algorithm~\ref{alg:Qlearning};
		dashed curves show same-grid idealized references.  Moderate
		robustness substantially improves performance under adverse common
		shocks.}
	\label{fig:numerics-sysrisk-profile}
\end{figure}

\subsection{Epidemics}\label{subsec:numerics-epidemics}

The epidemic examples model a planner who uses distancing to reduce disease
transmission in a population subject to aggregate transmission-rate shocks.
Both use common noise
\[
	E^0=\{0.5,0.81,1.8\},
\]
interpreted as the transmission-rate state.  The last point, $1.8$, is the
high-transmission state.  The adverse law is $p_{\rm adv}=(0,0,1)$, so the
interpolation defining $p_{\rm eval}^{\zeta}$ shifts evaluation mass toward
this high-transmission state as $\zeta$ increases.  In both epidemic models
we use the reduced two-action set $A=\{U,D\}$, where $U$ denotes unrestricted
behavior and $D$ denotes distancing.  Thus $\pi(D|s)$ denotes the probability
that policy $\pi$ assigns to the distancing action in individual state $s\in S$.
Since distancing non-susceptible agents does not affect the transition in
these finite models and only lowers reward, the reduced policy grid sets
$\pi(D|s)=0$ for non-susceptible states and varies $\pi(D|s)$ only at susceptible states.

\subsubsection{SIS}\label{subsubsec:numerics-sis}

The individual state space is
$S_{\rm SIS}=\{\mathrm S,\mathrm I\}$ corresponding to susceptible and infected states.  With
$\mu=(\mu_{\mathrm S},\mu_{\mathrm I})$, common noise $e^0=b$, and
$u=\pi(U|\mathrm S)$, the finite-grid mean-field transition used in the
computation is
\[
	\mu_{\mathrm I}'=\big[0.7\mu_{\mathrm I}
		+b\,\mu_{\mathrm I}\mu_{\mathrm S} u\big]_0^1,
	\qquad \mu_{\mathrm S}'=1-\mu_{\mathrm I}',
\]
where $[z]_0^1=\min\{1,\max\{0,z\}\}$.  This corresponds to recovery
probability $0.3$ for infected agents and infection pressure proportional
to the current infected mass, the transmission state $b$, and the
unrestricted susceptible fraction.  The lifted one-step reward penalizes the current infected mass and the
population-average probability of choosing the distancing action, and is defined as
\[
	\overline r(\mu\hatotimes\pi)=-\mu_{\mathrm I}
	-0.5\sum_{s\in S_{\rm SIS}}\mu_s\pi(D|s).
\]

In this example, the projected state grid is
\[
	\check S_{\rm SIS}
	=
	\left\{
	\left(\frac{k_{\mathrm S}}{12},\frac{k_{\mathrm I}}{12}\right)
	:
	k_{\mathrm S},k_{\mathrm I}\in\mathbb N_0,\
	k_{\mathrm S}+k_{\mathrm I}=12
	\right\},
\]
so $|\check S_{\rm SIS}|=13$.  The individual action set is
$A=\{U,D\}$, where $U$ denotes unrestricted behavior and $D$ denotes
distancing.  The susceptible-state action distribution is discretized as
\[
	\check A_{\rm epi}
	=
	\left\{
	\left(\frac{\ell_U}{10},\frac{\ell_D}{10}\right)
	:
	\ell_U,\ell_D\in\mathbb N_0,\
	\ell_U+\ell_D=10
	\right\},
\]
with coordinates corresponding to $(U,D)$.  Since distancing infected agents
does not affect the SIS transition and only lowers reward, the reduced policy
class fixes infected agents to choose unrestricted behavior:
\[
	\check\Pi_{\rm SIS}
	=
	\left\{
	\check\pi:
	\check\pi(\cdot|\mathrm S)\in\check A_{\rm epi},
	\quad
	\check\pi(\cdot|\mathrm I)=\delta_U
	\right\}.
\]
Therefore $|\check\Pi_{\rm SIS}|=|\check A_{\rm epi}|=11$.

With the configuration in Table~\ref{tab:numerics-config},
Figure~\ref{fig:numerics-sis-profile} shows close agreement between the
asynchronous and idealized profiles.  The moderate-robustness effect is visible
at intermediate drift: at $\zeta=0.5$, the mean reward from Algorithm~\ref{alg:Qlearning} increases from
$-1.089$ at $m=0$ to $-1.080$ at $m=0.03$, then decreases to $-1.098$ at
$m=1$.  At the severe law $\zeta=1$, high robustness remains beneficial,
with the mean reward from Algorithm~\ref{alg:Qlearning} increasing from $-1.175$ at $m=0$ to $-1.138$ at
$m=0.75$.

\begin{figure}[htbp]
	\centering
	\includegraphics[width=0.84\textwidth]{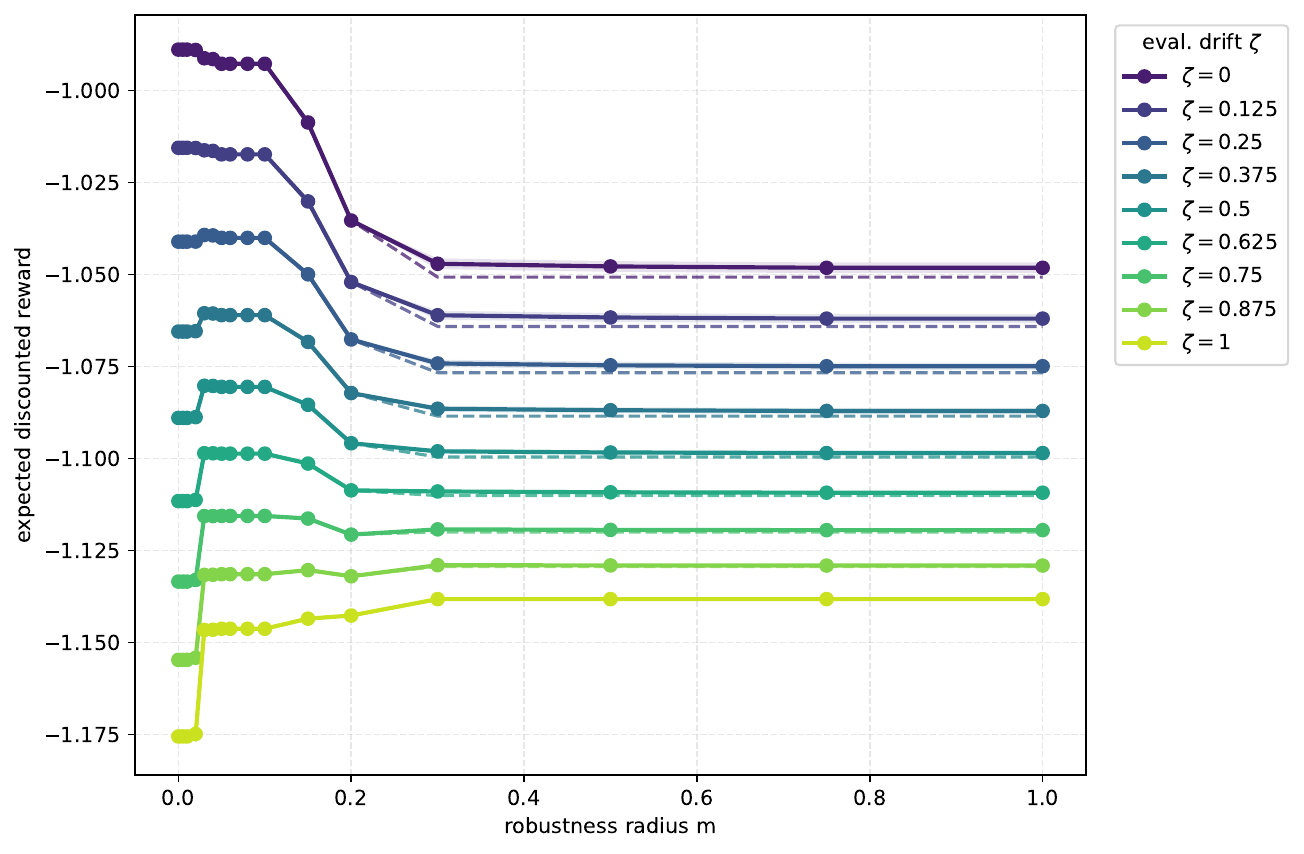}
	\caption{\textbf{SIS robustness profile.}  Solid curves show reward means
		from the asynchronous implementation of Algorithm~\ref{alg:Qlearning};
		dashed curves show idealized references.  The algorithm closely tracks
		the finite-grid idealized benchmark.}
	\label{fig:numerics-sis-profile}
\end{figure}

\subsubsection{SEIR}\label{subsubsec:numerics-seir}

The individual state space is
$S_{\rm SEIR}=\{\mathrm S,\mathrm E,\mathrm I,\mathrm R\}$, where the two new states are interpreted as Exposed and Recovered. In the Exposed state, the agent is not yet infectious. The common noise
$e^0=b$ is again the transmission rate.  Let
\[
	\theta_{\mathrm S}(\pi)=\pi(U|\mathrm S)+(1-\eta)\pi(D|\mathrm S).
\]
Here $\eta$ is the efficacy of distancing for susceptible agents, $\sigma$
is the incubation probability, and $\rho_{\rm rec}$ is the recovery
probability.  For $\mu=(\mu_{\mathrm S},\mu_{\mathrm E},\mu_{\mathrm I},\mu_{\mathrm R})$, the lifted transition map is described by the population mass transfers
\[
	\begin{aligned}
		d_{\mathrm S\mathrm E}
		                       & =\min\{\mu_{\mathrm S},
		b\mu_{\mathrm I}\mu_{\mathrm S}\theta_{\mathrm S}(\pi)\}, \\
		d_{\mathrm E\mathrm I} & =\sigma\mu_{\mathrm E},\qquad
		d_{\mathrm I\mathrm R}=\rho_{\rm rec}\mu_{\mathrm I}.
	\end{aligned}
\]

Equivalently,
\[
	\begin{aligned}
		\mu'_{\mathrm S} & =\mu_{\mathrm S}-d_{\mathrm S\mathrm E},                        \\
		\mu'_{\mathrm E} & =\mu_{\mathrm E}+d_{\mathrm S\mathrm E}-d_{\mathrm E\mathrm I}, \\
		\mu'_{\mathrm I} & =\mu_{\mathrm I}+d_{\mathrm E\mathrm I}-d_{\mathrm I\mathrm R}, \\
		\mu'_{\mathrm R} & =\mu_{\mathrm R}+d_{\mathrm I\mathrm R},
	\end{aligned}
\]
followed by clipping and renormalization on the finite population grid.  The
numerical experiments use $\eta=1$, $\sigma=0.5$, and
$\rho_{\rm rec}=0.2$.  The one-step reward is
\[
	\overline r(\mu\hatotimes\pi)=-\mu_{\mathrm I}-0.85 \sum_{s\in S_{\rm SEIR}}\mu_s\pi(D|s).
\]
Under the reduced policy class, this distancing term is
$\mu_{\mathrm S}\pi(D|\mathrm S)$.

In this example, the projected state grid is
\[
	\check S_{\rm SEIR}
	=
	\left\{
	\left(
	\frac{k_{\mathrm S}}{8},
	\frac{k_{\mathrm E}}{8},
	\frac{k_{\mathrm I}}{8},
	\frac{k_{\mathrm R}}{8}
	\right)
	:
	k_{\mathrm S},k_{\mathrm E},k_{\mathrm I},k_{\mathrm R}\in\mathbb N_0,\
	k_{\mathrm S}+k_{\mathrm E}+k_{\mathrm I}+k_{\mathrm R}=8
	\right\}.
\]
Thus $|\check S_{\rm SEIR}|=\binom{11}{3}=165$.  As in the SIS example,
$A=\{U,D\}$ and the susceptible-state action distribution is discretized on
\[
	\check A_{\rm epi}
	=
	\left\{
	\left(\frac{\ell_U}{10},\frac{\ell_D}{10}\right)
	:
	\ell_U,\ell_D\in\mathbb N_0,\
	\ell_U+\ell_D=10
	\right\}.
\]
Because distancing exposed, infected, or recovered agents does not affect the
SEIR transition in this finite model and only lowers reward, the reduced policy
class fixes these states to unrestricted behavior:
\[
	\check\Pi_{\rm SEIR}
	=
	\left\{
	\check\pi:
	\check\pi(\cdot|\mathrm S)\in\check A_{\rm epi},
	\quad
	\check\pi(\cdot|s)=\delta_U
	\ \text{for }s\in\{\mathrm E,\mathrm I,\mathrm R\}
	\right\}.
\]
Therefore $|\check\Pi_{\rm SEIR}|=|\check A_{\rm epi}|=11$.

With the configuration in Table~\ref{tab:numerics-config}, SEIR is the most
demanding of the three examples because the state space is larger and the
discount factor $\beta$ is higher.  Nevertheless,
Figure~\ref{fig:numerics-seir-profile} shows that the asynchronous curves remain
close to the idealized reference.  At $\zeta=0.5$, the mean reward from Algorithm~\ref{alg:Qlearning}
increases from $-2.680$ at $m=0$ to $-2.641$ at $m=0.15$, then decreases
slightly to $-2.646$ at $m=1$.  At $\zeta=1$, robustness increases the
mean reward from Algorithm~\ref{alg:Qlearning} from $-2.775$ at $m=0$ to $-2.650$ at $m=0.5$.

\begin{figure}[htbp]
	\centering
	\includegraphics[width=0.84\textwidth]{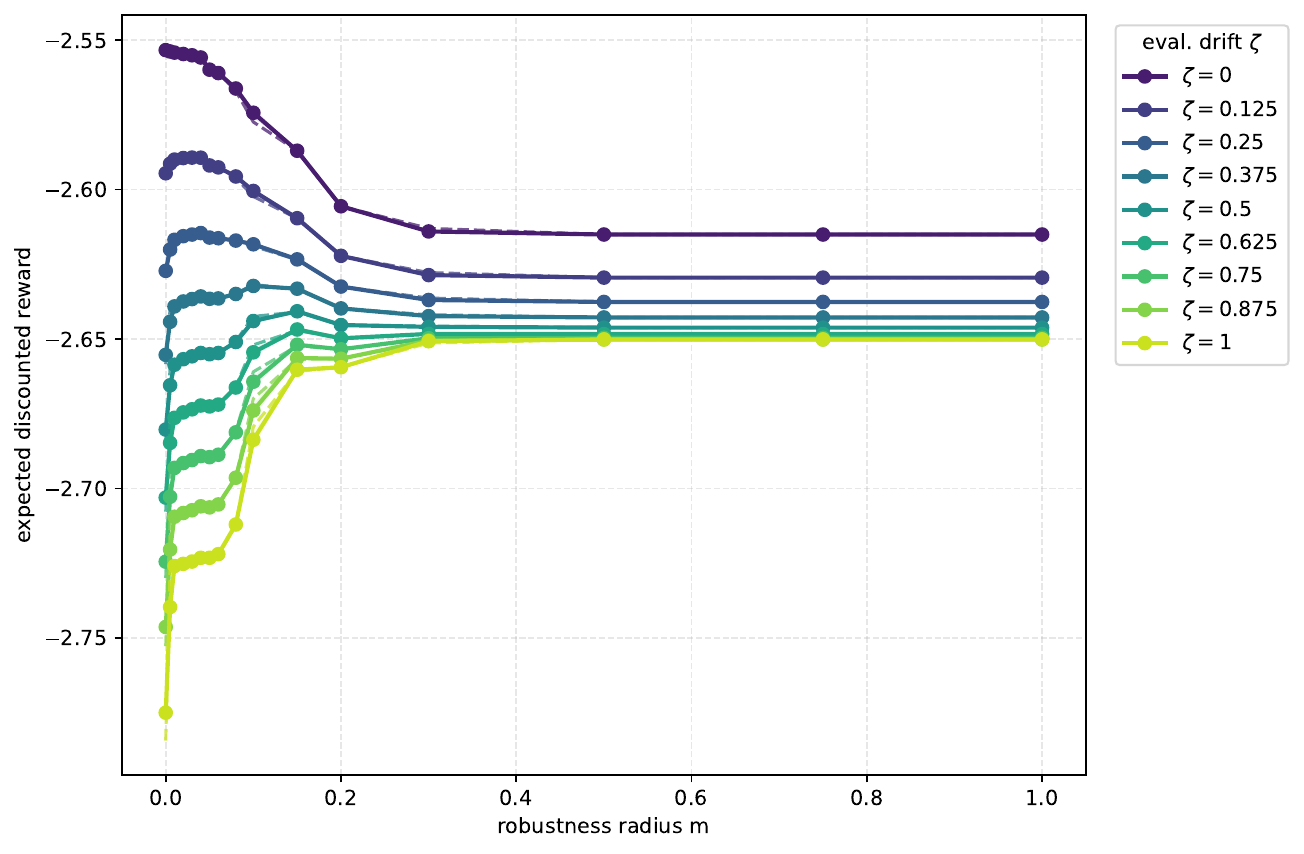}
	\caption{\textbf{SEIR robustness profile.}  Solid curves show reward means
		from the asynchronous implementation of Algorithm~\ref{alg:Qlearning};
		dashed curves show idealized references.  Even at the high discount
		$\beta=0.9$, the robust $Q$-learning algorithm matches the finite-grid idealized benchmark.}
	\label{fig:numerics-seir-profile}
\end{figure}

\newpage 
\section{From Mean-field control to $Q$-learning}\label{sec:MFC}
In this section, we introduce the robust mean-field control (MFC) problem under common noise uncertainty, which serves as the baseline for our $Q$-learning algorithm developed in Section~\ref{sec:Qlearning}. We first show how this MFC problem is linked to the fixed-point $v^*$ established in Proposition~\ref{pro:LNP2025}. In particular, the optimal $Q$-function $Q^*$ defined in~\eqref{eq:optimalQ}, constructed from $v^*$, serves as the learning target for our $Q$-learning algorithm.

Building on this connection, we then introduce a discretized approximation of $Q^*$, which enables the tabular representation of our $Q$-learning algorithm as in Framework \ref{frame:Qlearning}. In particular, the quantization and projection introduced in Definition \ref{dfn:projectedmaps} constitute the key ingredients for constructing the stochastic iterative scheme associated with the discretized $Q$-function. %
\subsection{Robust MFC problem under common noise uncertainty}\label{sec:MFC_framework}
We introduce %
a social planner's robust MFC problem under common noise uncertainty studied in \cite{lauriere2025robust}, which serves as the basis for our $Q$-learning algorithm and its convergence results in Section \ref{sec:Qlearning}.

We begin by constructing a measurable space $(\Omega,{\cal F})$.
Recalling from Section \ref{sec:Qlearning} the idiosyncratic noise space $E$ and common noise space $E^0$, define $G:=[0,1]$ and $\Theta:=[0,1]$, which represent the initial information space and randomization source space, respectively.

We let the space $\Omega$ be defined by
\begin{align*}
	\Omega:= \left\{\omega:=\big((g^i)_{i\in \mathbb{N}},(\theta_t^i)_{t\geq 0, i\in \mathbb{N}},(e_t^i)_{t\geq 1, i\in \mathbb{N}},(e^0_t)_{t\geq 1}\big) \; :
	\begin{aligned}
		 & \; (g^i,\theta^i_t)\in G \times \Theta,\;\;\mbox{for $t\geq 0$},\; i\in \mathbb{N};
		\\
		 & \; (e^i_t, e^0_t)\in E\times E^0,\;\;\mbox{for}\;t\geq 1,\; i\in \mathbb{N}
	\end{aligned}
	\right\}.
\end{align*}
Then define, for every $\omega \in \Omega$,
\begin{align}\label{eq:coordinate}
	\begin{aligned}
		\big(\gamma^i( \omega),\vartheta_0^i(\omega)\big)        & :=(g^i,\theta^i_0)\in G\times \Theta \quad   &  & i \in \mathbb{N},              \\
		\big(\vartheta_t^i(\omega),\varepsilon^i_t( \omega)\big) & :=(\theta^i_t,e_t^i)\in \Theta\times E \quad &  & t\geq 1,\;\; i \in \mathbb{N}, \\
		\varepsilon_t^0(\omega)                                  & :=e_t^0\in E^0\quad                          &  & t\geq 1,
	\end{aligned}
\end{align}
so that $\gamma^i$  and $(\vartheta_t^i)_{t\geq 0}$ represent the initial state information of agent $i$ and her %
randomization source process, respectively. Moreover, $(\varepsilon_t^i)_{t\geq1}$ represents her  idiosyncratic noise process, whereas $(\varepsilon_t^0)_{t\geq1}$ represents the common noise process for all agents.

Last, we let the $\sigma$-algebra $\mathcal F$ be defined by %
${\cal F}:=\sigma((\gamma^i)_{i\in \mathbb{N}},(\vartheta_t^i)_{t\geq 0, i\in \mathbb{N}},(\varepsilon_t^i)_{t\geq 1, i\in \mathbb{N}},(\varepsilon^0_t)_{t\geq 1})$.

Under this setting, %
we introduce the following filtrations: for each $i\in \mathbb{N}$
\begin{itemize}
	\item [$\cdot$] $\mathbb{F}^0:=({\cal F}_t^0)_{t\geq 0}$ is given by ${\cal F}_0^0:=\{\emptyset,\Omega\}$ and ${\cal F}^0_t:=\sigma(\varepsilon_{1:t}^0)$ for any $t\geq 1$.
	\item [$\cdot$] $\mathbb{F}^{i}:=({\cal F}^{i}_t)_{t\geq 0}$ is given by ${\cal F}_0^{i}:= \sigma(\gamma^i)$ and ${\cal F}^{i}_t:=\sigma(\gamma^i,\vartheta_{0:t-1}^i,\varepsilon_{1:t}^i,\varepsilon_{1:t}^0)$ for any $t\geq 1$.%
	\item [$\cdot$] $\mathbb{G}^{i}:=({\cal G}^{i}_t)_{t\geq 0}$ is given by
	      ${\cal G}_t^{i}:={\cal F}_t^{i}\vee \sigma(\vartheta_t^i)$ for all $t\geq 0$ so that $\mathbb{F}^{i}\subseteq\mathbb{G}^{i}$.
\end{itemize}
While ${\cal F}_t^0$ represents the common noise information shared by {all} agents at time $t$, ${\cal F}_t^{i}$ and ${\cal G}_t^{i}$ represent the information of agent $i$ at time $t$ for her state and action processes, respectively (see Definition~\ref{dfn:MFC}). In particular, ${\cal G}_t^{i}$ contains the current randomization source~$\vartheta_t^i$ whereas ${\cal F}_t^{i}$ does not. Consequently, these filtrations satisfy $\mathbb{F}^0\subset \mathbb{F}^i\subset \mathbb{G}^i$.

In what follows, we construct a set of probability measures that support all the random sources defined in \eqref{eq:coordinate} while inducing common noise uncertainty.

To this end, we recall from Section~\ref{sec:intro} the set ${\cal B}^0_{m,q}(\widehat{p}_{\varepsilon^0})$ in~\eqref{eq:uncertainty}, together with the law~$p_\varepsilon\in \overline E$.
\begin{definition}\label{dfn:measures}
	Let $p_\gamma:=p_\vartheta:={\cal L}_{[0,1]}$, where ${\cal L}_{[0,1]}$ denotes the Lebesgue measure on $[0,1]$.
	\begin{itemize}[leftmargin=3.em]
		\item [(i)] Let $\mathcal{K}^0$ be the set of $(p_t)_{t\geq 1}$ consisting of a measure and sequence of kernels such that
		      \begin{align*}
			      \hspace{3.em}  p_1 \in {\cal B}^0_{m,q}(\widehat{p}_{\varepsilon^0});\qquad p_t:(E^0)^{t-1}\ni e_{1:t-1}^0 \mapsto  p_t(de_t^0|e_{1:t-1}^0) \in {\cal B}^0_{m,q}(\widehat{p}_{\varepsilon^0})\quad \mbox{for all $t\geq 2$},
		      \end{align*}
		      inducing distributional uncertainty in the law of the common~noise process $(\varepsilon_t^0)_{t\geq 1}$.
		\item [(ii)] %
		      Let ${\cal Q}$ be defined as the subset of all Borel probability measures~$\mathbb{P}$ on $\Omega$ {induced by} %
		      some $(p_t)_{t\geq 1}\in \mathcal{K}^0$ in the sense that\footnote{For any set $X$ and $t\ge 1$, we use the notation $X^{t}:=X\times \cdots \times X$ for %
			      the $t$-times Cartesian product of $X$. Moreover, we write $X^{\mathbb{N}}$ for the countably infinite Cartesian product of $X$.
		      } for every $B_0\in \bigvee_{i \in \mathbb{N}} \mathcal{G}_0^{i}$ and $B_1 \in  \bigvee_{i \in \mathbb{N}}{\cal G}_1^{i}$
		      \begin{align*}%
			      \hspace{4.em}
			      \begin{aligned}
				       & \mathbb{P}\big\{(\gamma^i,\vartheta_0^i)_{i\in \mathbb{N}}\in B_0\big\}= \hat{Q}_0 %
				      (B_0),\quad \mathbb{P}\big\{((\gamma^i,\vartheta_{0:1}^i, \varepsilon_1^i)_{i\in \mathbb{N}},\varepsilon_1^0)\in B_1\big\}= (\hat{Q}_0 \otimes  \hat Q^{p_1})(B_1),
			      \end{aligned}
		      \end{align*}
		      where  %
		      \begin{align*}
			      \hspace{4.em}
			      \begin{aligned}
				      \hat Q_0\big((dg^i,d\theta_0^i)_{i\in \mathbb{N}}\big)                & :=\mathop{\otimes}\limits_{i \in \mathbb{N}}\big\{p_\gamma(dg^i)p_\vartheta(d\theta^i_0)\big\}\in {\cal P}\big((G\times\Theta)^{\mathbb{N}} \big) \\
				      \hat Q^{p_1}\big((d\theta^i_1, de^i_1)_{i\in \mathbb{N}}, de^0_1\big) & :=
				      \mathop{\otimes}\limits_{i \in \mathbb{N}}\big\{p_\vartheta(d\theta_1^i) p_\varepsilon(de_1^i)
				      \big\}p_1(de^0_1)\in {\cal P}\big((\Theta\times E)^{\mathbb{N}}\times E^0\big), %
			      \end{aligned}
		      \end{align*}
		      whereas for every $t\geq 2$ and $B_t\in  \bigvee_{i \in \mathbb{N}}{\cal G}_t^{i}$
		      \begin{align*}%
			      \hspace{3.em} \mathbb{P}\big\{\big((\gamma^i,\vartheta^i_{0:t}, \varepsilon_{1:t}^i)_{i\in \mathbb{N}},\varepsilon_{1:t}^0\big)\in B_t\big\}= (\hat Q_0 \otimes \hat Q^{p_1}\mathbin{\hat \otimes} \hat Q^{p_2}\mathbin{\hat \otimes} \cdots \mathbin{\hat \otimes} \hat Q^{p_t})(B_t),
		      \end{align*}
		      where the kernel $\hat Q^{p_t}:(E^0)^{t-1}\ni e_{1:t-1}^0\mapsto \hat Q^{p_t}((d\theta_t^{i},de_t^{i})_{i\in\mathbb{N}},de_t^0| e_{1:t-1}^0)\in {\cal P}((\Theta\times E)^{\mathbb{N}}\times E^0)$ is defined by
		      \begin{align*}%
			      \hspace{3.em}\hat Q^{p_t}\big((d\theta_t^{i},de_t^{i})_{i\in\mathbb{N}},de_t^0| e_{1:t-1}^0\big):=\mathop{\otimes}\limits_{i \in \mathbb{N}}\big\{p_\vartheta(d\theta^i_t) p_\varepsilon(de_t^i)
			      \big\}p_t(de_t^0|e_{1:t-1}^0).%
		      \end{align*}
	\end{itemize}
\end{definition}

\begin{remark}\label{rem:well_dfn_1}
	The introduction of $G$ and $\Theta$, together with $p_\gamma$ and $p_\vartheta$ in Definition~\ref{dfn:measures}, allows each agent $i\in \mathbb{N}$ to randomize the optimal policy determined by the social planner. This aligns with the existing literature on MFC problems with common noise; see, e.g.,~\cite{carmona2023model,lauriere2025robust,motte2022mean}.

	Moreover, the following statements hold:
	For any $\mathbb{P}\in {\cal Q}$ with corresponding $(p_t)_{t\geq 1}\in \mathcal{K}^0$ %
	\begin{itemize}[leftmargin=3.em]
		\item [(i)] $(\gamma^i)_{i\in \mathbb{N}}$ is independent and identically distributed (i.i.d.) with law $p_\gamma$. Moreover, $(\vartheta_t^i)_{t\geq 0,i\in \mathbb{N}}$ is i.i.d. with law $p_{\vartheta}$, and $(\varepsilon_t^i)_{t\geq 1,i\in \mathbb{N}}$ is i.i.d.\;with law $p_\varepsilon$.
		\item [(ii)] $\varepsilon_1^0$ is independent of $\bigvee_{i\in \mathbb{N}}{\cal G}_0^{i}$ with law $p_1$, whereas for every~$t\geq 2$ $\varepsilon_{t}^0$ is conditionally independent of $\bigvee_{i\in \mathbb{N}}{\cal G}_{t-1}^{i}$ given ${\cal F}_{t-1}^0$,
		      satisfying
		      \[
			      \mathscr{L}_{\mathbb{P}}(\varepsilon_{t}^0 | {\cal F}_{t-1}^0)= p_t(\,\cdot\,|\varepsilon_{1:t-1}^0) \quad \mbox{$\mathbb{P}$-a.s..}
		      \]
		      Therefore, $(\varepsilon_t^0)_{t\geq 1}$ is uncertain in the Wasserstein sense specified in \eqref{eq:uncertainty}.
	\end{itemize}
\end{remark}
\begin{remark} Recall the reference measure $p_\varepsilon\in \overline E$.
	Let $(\widehat p_t)_{t\geq 1}\in {\cal K}^0$ be defined by
	\[
		\widehat p_1:=\widehat{p}_{\varepsilon^0},\quad  \widehat{p}_t:(E^0)^{t-1}\ni e_{1:t-1}^0 \mapsto  \widehat{p}_t(de_t^0|e_{1:t-1}^0):=\widehat{p}_{\varepsilon^0}(de_t^0)\quad \mbox{for all $t\geq 2$}.
	\]
	Then we denote by $\widehat{\mathbb{P}}\in {\cal Q}$ the probability measure induced by $(\widehat p_t)_{t\geq 1}$, representing the reference probability measure on $(\Omega, {\cal F})$.
	When $m=0$ (so that ${\cal Q}=\{\widehat{\mathbb P}\}$, i.e., in the absence of uncertainty), the probability framework in Definition \ref{dfn:measures}
	coincides with the setting in \cite[Section~2.1.2]{carmona2023model} and is also similar to the one in \cite[Section~2]{motte2022mean}.
\end{remark}

We now define the social planner's robust MFC problem under open-loop controls setting, following  \cite[Definition~10]{carmona2023model} and \cite[Definition 2.5]{lauriere2025robust}. To this end, we recall from Section \ref{sec:intro} the state space $S$ and action space $A$, together with the basic components $\operatorname{F},r,\beta$ in \eqref{eq:Intro_condMckVla} and \eqref{eq:Intro_robustMFC}.
\begin{definition}\label{dfn:MFC}
	For each $i \in \mathbb{N}$, let $L^0_{\mathcal F_0^i}(S)$ denote the set of $\mathcal F_0^i$-measurable, $S$-valued random variables, representing the initial states of agent $i$.
	\begin{enumerate}[leftmargin=3.em]
		\item [(i)] Denote by $\Pi^{\operatorname{OL}}$ the set of all open-loop policies $\pi:=(\pi_t)_{t\geq 0}$ in the sense that $\pi_t:G\times \Theta^{t+1}\times E^t\times (E^0)^t\to A$ is a Borel measurable function for all $t\geq 0$. For any $\pi:=(\pi_t)_{t\geq 0}\in \Pi^{\operatorname{OL}}$, the corresponding action process of agent $i$ is given by the open-loop control
		      \[
			      \hspace{1.em} a_{t}^{i,\pi}:= \pi_t(\gamma^i,\vartheta_{0:t}^i,\varepsilon_{1:t}^{i},\varepsilon_{1:t}^0)\quad t\geq 1,\quad \mbox{with}\;\; a_0^{i,\pi}:=\pi_0(\gamma^i,\vartheta_0^i).
		      \]
		      In other words, $(a_t^{i,\pi})_{t\geq 0}$ is a $\mathbb{G}^i$-adapted process. %
		\item [(ii)] Let $\xi^i\in L^0_{\mathcal F_0^i}(S)$ be an initial state of agent $i$.  For any $\pi:=(\pi_t)_{t\geq0}\in \Pi^{\operatorname{OL}}$, the state process of agent $i$ under $\mathbb{P}\in {\cal Q}$ is governed by the conditional McKean--Vlasov dynamics:
		      \begin{align}\label{eq:MKV}
			      \hspace{1.em}
			      s_{t+1}^{i,\pi,\mathbb{P}} & :=\operatorname{F}(s^{i,\pi,\mathbb{P}}_t,a^{i,\pi}_t,\Lambda_t^{i,\pi,\mathbb{P}},\varepsilon_{t+1}^{i},\varepsilon_{t+1}^0)\quad t\geq 0,\quad \mbox{with } s_0^{i,\pi,\mathbb{P}}:=\xi^{i},
		      \end{align}
		      where $(a^{i,\pi}_t)_{t\geq 0}$ is the open-loop control of agent $i$ as defined in (i), and $\Lambda_t^{i,\pi,\mathbb{P}}$ is the conditional joint law of $(s^{i,\pi,\mathbb{P}}_t,a^{i,\pi}_t)$ under $\mathbb{P}$ given the common noise trajectory $\varepsilon_{1:t}^0$, i.e.,%
		      \[
			      \hspace{1.em}  \Lambda_t^{i,\pi,\mathbb{P}}:=\mathscr{L}_{\mathbb{P}}\big((s^{i,\pi,\mathbb{P}}_t,a^{i,\pi}_t) | \varepsilon_{1:t}^0\big)\quad t\geq 1,
		      \]
		      with the convention that $\Lambda_0^{i,\pi,\mathbb{P}}:=\mathscr{L}_{\mathbb{P}}((s^{i,\pi,\mathbb{P}}_0,a^{i,\pi}_0))$. %
		\item[(iii)] The contribution of agent $i$ to the social planner’s gain under $\pi:=(\pi_t)_{t\geq 0}\in \Pi^{\operatorname{OL}}$ and $\mathbb{P}\in {\cal Q}$ is defined by%
			\begin{align}\label{eq:reward_i}
				\quad R^{i,\pi,\mathbb{P}}(\xi^i):= \sum_{t=0}^\infty \beta^t r(s_{t}^{i,\pi,\mathbb{P}},a_{t}^{i,\pi},\Lambda_t^{i,\pi,\mathbb{P}}).
			\end{align}
			Then the social planner’s robust MFC problem is defined by
			\begin{align}\label{eq:worst_gain}
				\quad V^i(\xi^i):= \sup_{\pi \in \Pi^{\operatorname{OL}}} {\cal J}^{i,\pi}(\xi^i),\quad \mbox{where } {\cal J}^{i,\pi}(\xi^i):=\inf_{\mathbb{P}\in {\cal Q}}\mathbb{E}^{\mathbb{P}}[R^{i,\pi,\mathbb{P}}(\xi^i)].%
			\end{align}
	\end{enumerate}
\end{definition}

\begin{remark}\label{rem:MDP}
	Recall $\overline{\operatorname{F}},\overline{r}$ defined in Definition \ref{dfn:lifted_compo}. The following statements hold for any $i\in\mathbb{N}$.
	\begin{itemize}
		\item [(i)] By construction of the set ${\cal Q}$ in Definition \ref{dfn:measures}, the law of the initial state $\xi^i\in L_{{\cal F}_0^i}^0(S)$ is invariant w.r.t.~the choice of~supporting measure $\mathbb{P}\in {\cal Q}$ (see also \cite[Section 2.4, footnote~3]{lauriere2025robust}). Therefore, we can and do write $\mathscr{L}(\xi^i):=\mathscr{L}_{\mathbb{P}}(\xi^i)\in \overline S$ for any $\mathbb{P}\in{\cal Q}$.
		\item [(ii)] For $t\geq 1$, let $\mu^{i,\pi,\mathbb{P}}_t$ be the conditional law of $s^{i,\pi,\mathbb{P}}_t$ under $\mathbb{P}$ given~$\varepsilon_{1:t}^0$, with $\mu^{i,\pi,\mathbb{P}}_0:=\mathscr{L}(\xi^i)$. Then both
		      $(\mu^{{i,\pi,\mathbb{P}}}_t)_{t\geq 0}$ and $(\Lambda^{{i,\pi,\mathbb{P}}}_t)_{t\geq 0}$ in \eqref{eq:MKV} are $\mathbb{F}^0$-adapted, satisfying
		      \[
			      \mu^{{i,\pi,\mathbb{P}}}_{t+1}=\overline{\operatorname{F}}(\Lambda_t^{i,\pi,\mathbb{P}},\varepsilon_{t+1}^0)\quad \mbox{$\mathbb{P}$-a.s., for all $t\geq 0$.}
		      \]
		      Moreover, under the boundedness condition on $r$ in Assumption~\ref{as:MFC}\;(ii), for any $\pi:=(\pi_t)_{t\geq 0}\in \Pi^{\operatorname{OL}}$ and $\mathbb{P}\in {\cal Q}$, $R^{i,\pi,\mathbb{P}}(\xi^i)$ in \eqref{eq:reward_i} satisfies
		      \[
			      \mathbb{E}^{\mathbb{P}}[R^{i,\pi,\mathbb{P}}(\xi^i)]=\mathbb{E}^{\mathbb{P}}\bigg[\sum_{t=0}^\infty\beta^t\overline r (\Lambda_t^{i,\pi,\mathbb{P}})\bigg].%
		      \]
		      This implies that the robust MFC problem $V^i$ in \eqref{eq:worst_gain} can be formulated as a robust Markov decision process (MDP) on the probability space $\overline S$, as shown in \cite[Proposition~2.12 and Remark 2.13]{lauriere2025robust}.
	\end{itemize}
\end{remark}

The following theorem shows that the robust MFC problem $V^i(\xi^i)$ in \eqref{eq:worst_gain} is {\it law invariant} in the sense that $V^i(\xi^i)$ depends on its initial state $\xi^i$ only via its law~$\mathscr{L}(\xi^i)$; see~Remark \ref{rem:MDP}\;(i). Moreover, it is {\it indistinguishable} in the sense that for any $i\in\mathbb{N}$, $V^i(\xi^i)$ coincides with the unique fixed point~$v^*$ of the Bellman--Isaacs operator ${\cal T}$ defined in~\eqref{eq:bellmanisaacs} (see Proposition \ref{pro:LNP2025}). This result follows from \cite[Theorem~2.21]{lauriere2025robust}.
\begin{theorem}\label{thm:LNP2025}
	Under Assumption~\ref{as:MFC}, %
	it holds for any $i\in\mathbb{N}$ that $v^*(\mathscr{L}(\xi^i))=V^i(\xi^i)$.
\end{theorem}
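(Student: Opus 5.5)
The plan is to reduce the statement to results already recorded in the companion paper and assembled in Section~\ref{sec:MFC_framework}. Throughout, fix $i\in\mathbb{N}$ and $\xi^i\in L^0_{\mathcal F_0^i}(S)$, and write $\mu_0:=\mathscr{L}(\xi^i)$. By Remark~\ref{rem:MDP}\;(i), this law does not depend on the choice of $\mathbb{P}\in{\cal Q}$. The argument has two halves, an upper bound $V^i(\xi^i)\le v^*(\mu_0)$ and a lower bound $V^i(\xi^i)\ge v^*(\mu_0)$. Both run through the lifted robust MDP representation from Remark~\ref{rem:MDP}\;(ii).

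\textbf{Lifted representation.} Under any $\pi\in\Pi^{\operatorname{OL}}$ and $\mathbb{P}\in{\cal Q}$ induced by $(p_t)_{t\ge1}\in{\cal K}^0$, the following hold.
\begin{itemize}
\item We have $\mathbb{E}^{\mathbb{P}}[R^{i,\pi,\mathbb{P}}(\xi^i)]=\mathbb{E}^{\mathbb{P}}[\sum_t\beta^t\overline r(\Lambda_t^{i,\pi,\mathbb{P}})]$.
\item The flow satisfies $\mu_{t+1}=\overline{\operatorname{F}}(\Lambda_t,\varepsilon^0_{t+1})$.
\item By disintegration, $\Lambda_t=\mu_t\hatotimes\pi^{\mathbb{P}}_t$ for some $\mathbb{F}^0$-adapted random kernel $\pi^{\mathbb{P}}_t\in\Pi$.
\item Conversely, every $\mathbb{F}^0$-adapted sequence of kernels in $\Pi$ is realized by some open-loop policy, by randomizing with $\vartheta^i_t$ via a quantile construction.
\end{itemize}
A subtlety is that $\Lambda_t$ depends on $\mathbb{P}$ only through the realized common-noise path. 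It does not depend on the law of that path, because idiosyncratic noises and randomizations have fixed laws. Hence the lifted controls $(\pi^{\mathbb{P}}_t)$ are functions of $\varepsilon^0_{1:t}$ alone, and the problem becomes $\sup$ over lifted feedback controls of $\inf$ over $(p_t)\in{\cal K}^0$ of the expected discounted lifted reward. This is a robust MDP on $\overline S$ with rectangular ambiguity ${\cal B}^0_{m,q}(\widehat p_{\varepsilon^0})$ at each step.

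\textbf{Upper bound.} Fix $\pi\in\Pi^{\operatorname{OL}}$ and $\varepsilon>0$. By Proposition~\ref{pro:LNP2025} and continuity, $v^*={\cal T}v^*$ is Lipschitz and bounded, and the infimum over the compact ball is attained. Measurable selection on the finite set $E^0$ is trivial: we choose $p_{t+1}(\cdot|e^0_{1:t})$ attaining $\inf_p\int v^*(\overline{\operatorname{F}}(\Lambda_t,e^0))p(de^0)$ given the history. Then
\[
\mathbb{E}^{\mathbb{P}}\bigl[\overline r(\Lambda_t)+\beta v^*(\mu_{t+1})\mid{\cal F}^0_t\bigr]\le {\cal T}v^*(\mu_t)=v^*(\mu_t).
\]
Telescoping up to time $N$ and using boundedness of $v^*$ with $\beta<1$ gives ${\cal J}^{i,\pi}\le\mathbb{E}^{\mathbb{P}}[R]\le v^*(\mu_0)$.

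\textbf{Lower bound.} Choose a measurable $\varepsilon$-optimal selector $\hat\pi(\mu)\in\Pi$ for the supremum in ${\cal T}v^*(\mu)$, which is possible since $\Pi$ is compact and the map is continuous. Realize the feedback $\pi_t=\hat\pi(\mu_t)$ as an open-loop policy. For every $\mathbb{P}\in{\cal Q}$, the conditional one-step inequality reverses up to $\varepsilon$, because any $p_{t+1}$ lies in the ball and therefore exceeds the infimum. This yields $\mathbb{E}^{\mathbb{P}}[R]\ge v^*(\mu_0)-\varepsilon/(1-\beta)$ uniformly in $\mathbb{P}$.

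\textbf{Main obstacle.} The hardest step is realizing the feedback lifted control as a genuine open-loop policy whose induced conditional law $\Lambda_t$ equals $\mu_t\hatotimes\hat\pi(\mu_t)$ simultaneously under every $\mathbb{P}\in{\cal Q}$. The natural choice is to set
\[
a_t=\text{quantile}_{\hat\pi(\mu_t)(\cdot|s_t)}(\vartheta^i_t),
\]
where $\mu_t$ is computed pathwise as a deterministic function of $\varepsilon^0_{1:t}$ through iterating $\overline{\operatorname{F}}$. This requires showing that $s_t$ is a measurable function of $(\gamma^i,\vartheta^i_{0:t-1},\varepsilon^i_{1:t},\varepsilon^0_{1:t})$ and that its conditional law given $\varepsilon^0_{1:t}$ is $\mathbb{P}$-independent. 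The justification is conditional independence from Remark~\ref{rem:well_dfn_1}\;(ii). This is exactly the content packaged in \cite[Proposition~2.12, Theorem~2.21]{lauriere2025robust}, which I would ultimately cite for the measurability details.
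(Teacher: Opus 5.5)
Your proof is correct. The paper gives no argument of its own and simply invokes \cite[Theorem~2.21]{lauriere2025robust}, which rests on the lifted robust-MDP reformulation \cite[Proposition~2.12]{lauriere2025robust} recorded in Remark~\ref{rem:MDP}\;(ii). You instead carry out a direct two-sided verification on that lifted MDP: a worst-case kernel chosen adaptively along the common-noise path for the upper bound, an $\varepsilon$-optimal lifted feedback realized through $\vartheta^i_t$ for the lower bound, and telescoping with $\beta<1$. The paper's citation buys brevity; your version buys transparency.

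In this finite setting your argument can be closed without the final appeal to the companion paper. Since $E^0$ is finite, $\Lambda_t$ and $\mu_t$ are functions on the finite set $(E^0)^t$. So both the minimizing $p_{t+1}(\cdot|e^0_{1:t})$ and the $\varepsilon$-optimal kernel can be chosen node by node on the finite tree, and no measurable-selection theorem is needed. Attainment of the infimum also needs no continuity of $v^*$, because the objective is linear in $p$ on a compact subset of a finite-dimensional simplex. The realized action, the quantile of the chosen kernel at $s_t$ evaluated at $\vartheta^i_t$, is then a finite composition of Borel maps of $(\gamma^i,\vartheta^i_{0:t},\varepsilon^i_{1:t},\varepsilon^0_{1:t})$, using that $\xi^i$ is a Borel function of $\gamma^i$.

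You should sharpen the justification for the $\mathbb{P}$-independence of $\Lambda_t$. It follows from the product structure in Definition~\ref{dfn:measures}: under every $\mathbb{P}\in{\cal Q}$, the idiosyncratic block $(\gamma^i,\vartheta^i,\varepsilon^i)$ is independent of the whole common-noise path and has a fixed law. Hence a canonical version of $\Lambda_t$ computed under that law serves for all $\mathbb{P}$ simultaneously. Remark~\ref{rem:well_dfn_1}\;(ii) does not give this, since it says nothing about how the fresh $(\vartheta^i_t,\varepsilon^i_t)$ relate to $\varepsilon^0_t$. Finally, the $\varepsilon>0$ you fix in the upper bound plays no role.
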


This result is the verification theorem for the robust MFC problem in Definition~\ref{dfn:MFC}. In particular, it justifies that the optimal $Q$-function $Q^*$ defined in \eqref{eq:optimalQ}, constructed from the fixed point~$v^*$ in Proposition \ref{pro:LNP2025}, indeed serves as the learning target for the robust MFC problem. %

\subsection{Discretized $Q$-function}\label{sec:DiscretizedQ}
Since the optimal $Q$-function $Q^*$ in \eqref{eq:optimalQ} is defined on a infinite dimensional domain, a direct tabular implementation is not feasible. In this section, we will introduce an approximation of $Q^*$ living only on the finite subset $\check S\times \check \Pi \subset \overline S\times \Pi$, we refer to as the discretized $Q$-function, based on quantization and projection introduced in Definition~\ref{dfn:projectedmaps}. Then we will establish the corresponding approximation error bounds.

To this end, recalling the mapping $\overline{\operatorname{F}}$ in Definition \ref{dfn:lifted_compo}\;(i) and the projection $\operatorname{pj}_{\check S}$ in Definition~\ref{dfn:projectedmaps}\;(ii), we define the projected version of $\overline{\operatorname{F}}$ by %
\begin{align}\label{eq:project_S}
	\check{\operatorname{F}}:\check S\times \check \Pi\times E^0\ni (\check \mu,\check \pi,e^0)\mapsto \check{\operatorname{F}}(\check \mu,\check \pi,e^0):=\operatorname{pj}_{\check{S}}(\overline{\operatorname{F}}(\check \mu\hatotimes \check \pi,e^0))\in \check{S}.
\end{align}

Then we introduce a discretized version $\check {\cal T}$ of the Bellman--Isaacs operator ${\cal T}$ in \eqref{eq:bellmanisaacs}, %
defined on~the set
$\ell^\infty (\check S):=\{\check v:\check S\to\mathbb R$\} endowed with $\|\check v\|_{\check S}:=\max_{\check \mu\in \check S}|\check v(\check \mu)|$,  by
\begin{align}\label{eq:projectedbellmanisaac}
	\check{\cal T} \check v(\check \mu):= \max_{\check \pi \in \check \Pi}\bigg\{\overline{r}(\check \mu\hatotimes \check \pi )+\beta\inf_{p\in {\cal B}^0_{m,q}(\widehat{p}_{\varepsilon^0})} \int_{E^0} \check v\big(\check{\operatorname{F}}(\check \mu,\check \pi,e^0)\big) p(de^0)%
	\bigg\},\quad \check \mu\in \check S.
\end{align}

The following lemma shows that $\check {\cal T}$ admits a unique fixed point.
\begin{lemma}\label{lem:projectedDPP} Suppose that Assumption \ref{as:MFC} is satisfied. Then there exists a unique fixed point $\check v^*\in \ell^\infty (\check S)$ of $\check{\cal T}$ satisfying $\check v^*=\check {\cal T}\check v^*$. Moreover, for any $\check v\in \ell^\infty(\check S)$, $\lim_{n\to \infty} \check {\cal T}^n \check v=\check v^*$.
\end{lemma}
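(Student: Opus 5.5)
The plan is to apply the Banach fixed-point theorem to $\check{\cal T}$ on the complete metric space $(\ell^\infty(\check S),\|\cdot\|_{\check S})$. Completeness is immediate: since $\check S$ is finite, $\ell^\infty(\check S)$ is just $\mathbb{R}^{|\check S|}$ with the max-norm.

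The first step is well-definedness, namely that $\check{\cal T}\check v$ is a real-valued function on $\check S$. For fixed $(\check\mu,\check\pi)$, the map $e^0\mapsto \check v(\check{\operatorname{F}}(\check\mu,\check\pi,e^0))$ is a function on the finite set $E^0$. It is therefore bounded by $\|\check v\|_{\check S}$, and its integral against any $p\in{\cal B}^0_{m,q}(\widehat p_{\varepsilon^0})$ lies in $[-\|\check v\|_{\check S},\|\check v\|_{\check S}]$. Hence the infimum over $p$ is finite. The reward term $\overline r(\check\mu\hatotimes\check\pi)$ is bounded by $C_{r,\infty}$ (Remark~\ref{rem:regular_lift}). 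The outer maximum is over the finite set $\check\Pi$ (Definition~\ref{dfn:projectedmaps}(ii)), so it is attained and finite. No measurability or continuity issues arise, because every space involved is finite.

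The second and main step is the contraction estimate $\|\check{\cal T}\check v-\check{\cal T}\check w\|_{\check S}\le\beta\|\check v-\check w\|_{\check S}$. I will use two elementary inequalities:
\[
|\max_i a_i-\max_i b_i|\le\max_i|a_i-b_i| \quad\text{and}\quad |\inf_p \textstyle\int f\,dp-\inf_p\int g\,dp|\le\sup_p\int|f-g|\,dp .
\]
For fixed $\check\mu$, the reward terms cancel inside the maximum over $\check\pi$. What remains is $\beta$ times the difference of the two infima over the common set ${\cal B}^0_{m,q}(\widehat p_{\varepsilon^0})$. Applying the second inequality with $f=\check v\circ\check{\operatorname{F}}(\check\mu,\check\pi,\cdot)$ and $g=\check w\circ\check{\operatorname{F}}(\check\mu,\check\pi,\cdot)$ bounds this difference by
\[
\sup_{e^0}|\check v(\check{\operatorname{F}}(\check\mu,\check\pi,e^0))-\check w(\check{\operatorname{F}}(\check\mu,\check\pi,e^0))|\le\|\check v-\check w\|_{\check S},
\]
since $\check{\operatorname{F}}$ takes values in $\check S$.

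The only point needing care is the infimum inequality, because the infimum need not be attained a priori. I will handle it with an $\epsilon$-optimal $p$ for one side, which yields one direction, and then argue symmetrically for the other. Alternatively, I could note that the ball ${\cal B}^0_{m,q}(\widehat p_{\varepsilon^0})$ is weakly compact in ${\cal P}(E^0)$ with $E^0$ finite, so the infimum is attained.

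Since $\beta<1$ under Assumption~\ref{as:MFC}(iii), $\check{\cal T}$ is a $\beta$-contraction. Banach's theorem then gives a unique fixed point $\check v^*$, together with the convergence $\|\check{\cal T}^n\check v-\check v^*\|_{\check S}\le\beta^n\|\check v-\check v^*\|_{\check S}\to0$ for every $\check v\in\ell^\infty(\check S)$. Notably, no Lipschitz class restriction is needed here, unlike in Proposition~\ref{pro:LNP2025}, because there are no continuity requirements on the finite grid.
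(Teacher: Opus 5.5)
Your proposal is correct and follows essentially the same route as the paper: it checks that $\check{\cal T}\check v$ is bounded, proves the $\beta$-contraction estimate using $|\max-\max|\le\max|\cdot|$ and $|\inf_p\int f\,dp-\inf_p\int g\,dp|\le\sup_p\int|f-g|\,dp$ over the common ball, and then applies Banach's fixed point theorem. The infimum inequality also needs neither an $\epsilon$-optimal measure nor compactness, since it follows by taking the infimum over $p$ in $\int f\,dp\le\int g\,dp+\sup_{p'}\int|f-g|\,dp'$ and then swapping the roles of $f$ and $g$.
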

\begin{proof}
	We show that $\check{\mathcal T}$ is a contraction on $(\ell^\infty(\check S),\|\cdot\|_{\check S\times\check \Pi})$. Since $\check v\in \ell^\infty(\check S)$ and the reward function $r$ are bounded (see Remark \ref{rem:regular_lift}), we have %
	\[
		\|\check{\cal T} \check v\|_{\check S}\leq \sup_{(s,a,\Lambda)\in S\times A \times \overline{S\times A}} |r(s,a,\Lambda)|+ \beta \|\check v\|_{\check S}=C_{r,\infty}+ \beta \|\check v\|_{\check S}<\infty,
	\]
	hence, $\check{\cal T}\check v\in \ell^\infty(\check S)$.

	Moreover, for any $\check v,\check w\in \ell^\infty(\check{S})$ and any $\check \mu\in \check{S}$
	\begin{align*}
		 & |\check{\cal T}\check v(\check \mu)-\check{\cal T}\check w(\check \mu)|                                                                                                                                                                                                                                                                                                                  \\
		 & \quad \leq \beta \max_{\check \pi \in \check \Pi}\bigg\{\bigg|\inf_{p\in {\cal B}^0_{m,q}(\widehat{p}_{\varepsilon^0})}\int_{E^0} \check v\big(\check{\operatorname{F}}(\check \mu,\check \pi,e^0)\big) p(de^0)-\inf_{p\in {\cal B}^0_{m,q}(\widehat{p}_{\varepsilon^0})}\int_{E^0} \check w\big(\check{\operatorname{F}}(\check \mu,\check \pi,e^0)\big) p(de^0)\bigg|\bigg\} \nonumber \\
		 & \quad \leq \beta \max_{\check \pi \in \check \Pi} \sup_{p\in {\cal B}^0_{m,q}(\widehat{p}_{\varepsilon^0})}\int \Big|\check v\big(\check{\operatorname{F}}(\check \mu,\check \pi,e^0)\big) -\check w\big(\check{\operatorname{F}}(\check \mu,\check \pi,e^0)\big) \Big|p(de^0)                                                                                                           \\
		 & \quad \leq \beta \|\check v-\check w\|_{\check S}. \nonumber
	\end{align*}
	Furthermore, since $\beta\in[0,1)$, $\check {\cal T}$ is a contraction, as claimed.

	Thus the proof is concluded by an application of Banach's fixed point theorem (see, e.g., \cite[Theorem A 3.5]{bauerle2011markov}).
\end{proof}

Using the fixed point $\check v^*$ from Lemma \ref{lem:projectedDPP}, we define the discretized version of the optimal $Q$-function by setting for every $(\check \mu,\check \pi)\in\check S\times \check \Pi$
\begin{align}\label{eq:projectedoptimalQ}
	\check{Q}^*(\check \mu, \check \pi):= \overline r(\check \mu \hatotimes \check \pi)+ \beta \inf_{p\in {\cal B}^0_{m,q}(\widehat{p}_{\varepsilon^0})} \int_{E^0} \check v^*\big(\check{\operatorname{F}}(\check \mu,\check \pi,e^0)\big) p(de^0).
\end{align}
We have by Lemma \ref{lem:projectedDPP} that $\max_{\check\pi\in\check \Pi}\check{Q}^*(\check \mu, \check \pi)=\check v^*(\check \mu)$ for any $\check \mu\in \check S$.  We will refer to $\check{Q}^*$ as the discretized $Q$-function.

Next we aim to establish a bound for the discretized $Q$-function and quantify its deviation from the optimal $Q$-function in \eqref{eq:optimalQ} on the quantized spaces  $\check S$ and $\check\Pi$.%

To this end, we collect several preliminary results related to $\check S$ and $\check\Pi$.%
\begin{lemma}\label{lem:estimateQ_pre} The following hold:
	\begin{itemize}
		\item [(i)] \(\sup_{\pi \in \Pi}\min_{\check \pi \in \check \Pi}d_{\Pi}(\pi,\check \pi)\leq \varepsilon_{\check A}\), with the constant $\varepsilon_{\check A}>0$ given in Definition \ref{dfn:projectedmaps}.
		\item [(ii)] For any $(\mu,\pi)\in \overline{S}\times \Pi$ and $(\check \mu,\check \pi)\in \check S\times \check \Pi$, %
		      \[
			      {W}_1(\check \mu \hatotimes \check \pi ,\mu \hatotimes \pi)\leq (1+\Delta_A\underline{{\Delta}}_S^{-1}) {W}_1(\check \mu, \mu)+d_{\Pi}(\check \pi, \pi),
		      \]
		      with the constants $\Delta_A$ and $\underline{{\Delta}}_S$ given in \eqref{eq:sep_dis_diam}.
	\end{itemize}
\end{lemma}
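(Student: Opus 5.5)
Part (i) follows directly from the construction of $\check\Pi$ in Definition~\ref{dfn:projectedmaps}\;(ii). Fix $\pi\in\Pi$. For each $s\in S$, the measure $\pi(\cdot|s)\in\overline A$ has some $\check\nu_s\in\check A$ with $W_1(\pi(\cdot|s),\check\nu_s)\le\varepsilon_{\check A}$, because $\check A$ is an $\varepsilon_{\check A}$-net. Set $\check\pi(\cdot|s):=\check\nu_s$. Since $\check\Pi$ consists of \emph{all} maps $S\to\check A$, this $\check\pi$ belongs to $\check\Pi$, and $d_\Pi(\pi,\check\pi)=\max_s W_1(\pi(\cdot|s),\check\nu_s)\le\varepsilon_{\check A}$. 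Taking the minimum over $\check\Pi$ and then the supremum over $\pi$ gives (i).

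For part (ii) I would use the triangle inequality through the intermediate measure $\check\mu\hatotimes\pi$:
\[
W_1(\check\mu\hatotimes\check\pi,\mu\hatotimes\pi)\le W_1(\check\mu\hatotimes\check\pi,\check\mu\hatotimes\pi)+W_1(\check\mu\hatotimes\pi,\mu\hatotimes\pi).
\]

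\textbf{First term.} For each $s$, take an optimal coupling $\gamma_s$ of $\check\pi(\cdot|s)$ and $\pi(\cdot|s)$. Build the coupling $\check\mu(ds)\,\delta_s(ds')\,\gamma_s(da,da')$ of the two measures on $S\times A$. Under the metric $|s-s'|+|a-a'|$, or under the Euclidean norm on the product, which is bounded by that sum, its cost is $\sum_s\check\mu(s)W_1(\check\pi(\cdot|s),\pi(\cdot|s))\le d_\Pi(\check\pi,\pi)$.

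\textbf{Second term.} This is the main obstacle, because the kernel $\pi$ is arbitrary and need not be Lipschitz in $s$. I would work with total variation, which is natural on a finite space. Take a coupling $\rho$ of $\check\mu$ and $\mu$ that keeps the common mass $\check\mu\wedge\mu$ on the diagonal, so that the off-diagonal mass equals $\|\check\mu-\mu\|_{\operatorname{TV}}$. Lift it to $S\times A$: on the diagonal, use the same action $a=a'\sim\pi(\cdot|s)$, at zero cost; off the diagonal, couple $\pi(\cdot|s)$ with $\pi(\cdot|s')$ arbitrarily, at cost at most $|s-s'|+\Delta_A$. The total cost is then at most
\[
\int|s-s'|\,\rho(ds,ds')+\Delta_A\|\check\mu-\mu\|_{\operatorname{TV}}.
\]
Two bounds finish the estimate. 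First, on the finite set $S$ we have $W_1(\check\mu,\mu)\ge\underline{\Delta}_S\|\check\mu-\mu\|_{\operatorname{TV}}$, since every off-diagonal unit of mass costs at least $\underline\Delta_S$. Second, the diagonal-preserving coupling need not be $W_1$-optimal, so $\int|s-s'|\,d\rho$ must also be controlled. The clean way is to take $\rho$ to be a $W_1$-optimal coupling and use the fact that on a finite metric space an optimal plan can be chosen to leave the common mass in place. This holds because $|\cdot|$ satisfies the triangle inequality, so mass already shared by $\check\mu$ and $\mu$ never needs to move. Then $\int|s-s'|\,d\rho=W_1(\check\mu,\mu)$, and its off-diagonal mass is $\|\check\mu-\mu\|_{\operatorname{TV}}\le\underline\Delta_S^{-1}W_1(\check\mu,\mu)$.

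Combining the two terms gives
\[
W_1(\check\mu\hatotimes\check\pi,\mu\hatotimes\pi)\le(1+\Delta_A\underline\Delta_S^{-1})W_1(\check\mu,\mu)+d_\Pi(\check\pi,\pi),
\]
which is the bound claimed in (ii).
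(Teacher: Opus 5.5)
Your proof is correct and follows the paper's argument: the same nearest-point construction for (i), and for (ii) the same triangle inequality through $\check\mu\hatotimes\pi$, the same coupling $\check\mu(ds)\delta_s(ds')\kappa_s(da,da')$ for the first term, and the same lift of a coupling of $(\check\mu,\mu)$ for the second. Your detour through total variation and a diagonal-preserving optimal plan is unnecessary, because any $W_1$-optimal coupling $\gamma$ of $\check\mu$ and $\mu$ satisfies the pointwise bound $\mathbf{1}_{\{s\neq s'\}}\le \underline{\Delta}_S^{-1}|s-s'|$, which already gives $\Delta_A\,\gamma(\{s\neq s'\})\le \Delta_A\underline{\Delta}_S^{-1}W_1(\check\mu,\mu)$; this is what the paper does.
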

\begin{proof}
	We first prove (i). %
	By Definition \ref{dfn:projectedmaps}, for any $\pi\in \Pi$ and $s\in S$, there exists $\nu_s^\pi\in \check A$ such that
	\(
	{W}_1(\pi(\cdot|s),\nu^\pi_s)\leq \varepsilon_{\check A}.
	\)
	Define $\check \pi\in \check \Pi$ by \(
	\check \pi(\cdot|s):= \nu_s^\pi
	\)
	for each $s\in S$.  Since $S$ is finite, we have
	\(
	d_{\Pi}(\pi,\check \pi)=\max_{s\in S} {W}_1(\pi(\cdot|s),\nu_s^\pi)\leq \varepsilon_{\check A}.
	\)
	Hence, $\min_{\check \pi\in \check \Pi}d_{\Pi}(\pi,\check \pi)\leq \varepsilon_{\check A}$, which proves~(i).

	We now prove (ii). Let $(\mu,\pi)\in \overline{S}\times \Pi$ and $(\check \mu,\check \pi)\in \check S\times \check \Pi$. By the triangle inequality,
	\begin{align}\label{eq:coupling1}
		{W}_1(\check \mu \hatotimes \check \pi ,\mu \hatotimes \pi)\leq {W}_1(\check \mu \hatotimes \check \pi ,\check\mu \hatotimes \pi)+{W}_1(\check \mu \hatotimes \pi ,\mu \hatotimes \pi)=:\operatorname{I}+\operatorname{II}.
	\end{align}

	For each $s\in S$, let $\kappa_s\in \operatorname{Cpl}(\check \pi(\cdot|s),\pi(\cdot|s))$ be an optimal\footnote{That is, $\kappa_s$ is a coupling such that %
		\(
		W_1(\check{\pi}(\cdot | s), \pi(\cdot | s))
		= \int_{A \times A} |a-a'| \kappa_s(da,da');
		\)
		see \cite[Theorem~4.1]{villani2008optimal}.
	} coupling. Define %
	\[
		\Gamma^1(ds,da,ds',da'):= \kappa_s(da,da')\delta_{s}(ds')\check\mu(ds)\in \operatorname{Cpl}(\check \mu \hatotimes \check \pi ,\check\mu \hatotimes \pi).
	\]
	Then, by the definition of $W_1$,
	\begin{align}\label{eq:coupling2}
		\operatorname{I}%
		\leq \int_{S}{ W}_1(\check \pi(\cdot|s),\pi(\cdot|s))\check \mu(ds)\leq d_{\Pi}(\check \pi, \pi).
	\end{align}

	Next, let $\gamma\in \operatorname{Cpl(\check\mu,\mu)}$ be an optimal coupling. For each $s,s'\in S$, let $\kappa_{s,s'}\in \operatorname{Cpl}(\pi(\cdot|s),\pi(\cdot|s'))$ be an optimal coupling.

	Define
	\(
	\Gamma^2(ds,da,ds',da'):=\kappa_{s,s'}(da,da') \gamma(ds,ds')\in \operatorname{Cpl}(\check \mu \hatotimes \pi ,\mu \hatotimes \pi).
	\)
	Then,
	\begin{align}\label{eq:coupling3}
		\begin{aligned}
			\operatorname{II} & \leq                                                                                                                                                %
			\int_{S\times S}\big(|s-s'|+{W}_1(\pi(\cdot|s),\pi(\cdot|s'))\big)\gamma(ds,ds')                                                                                        \\
			                  & \leq \int_{S\times S}(|s-s'|+ \Delta_A {\bf 1}_{\{s\neq s'\}})\gamma(ds,ds')                                                                        \\
			                  & \leq (1+\Delta_A\underline{{\Delta}}_S^{-1}) \int_{S\times S}|s-s'|\gamma(ds,ds')= (1+\Delta_A\underline{{\Delta}}_S^{-1})  {W}_1(\check \mu, \mu).
		\end{aligned}
	\end{align}
	Combining \eqref{eq:coupling1}, \eqref{eq:coupling2}, and \eqref{eq:coupling3} yields~(ii). %
\end{proof}

\begin{lemma}\label{lem:liftedoptimalQ}
	Under Assumption \ref{as:MFC}, %
	we have for every~$(\check \mu,\check \pi)\in\check S\times \check \Pi$ that
	\begin{align*}
		|\check Q^*(\check \mu,\check \pi)|                             & \leq \frac{C_{r,\infty}}{1-\beta},                                                                                             \\
		|\check Q^*(\check \mu,\check \pi)- Q^*(\check \mu,\check \pi)| & \leq \frac{\beta}{1-\beta} \big(L^* \varepsilon_{\check S} +2(C_r+\beta L^* C_{\operatorname{F}}) \varepsilon_{\check A}\big),
	\end{align*}
	with the constant $L^*>0$ given in Proposition \ref{pro:LNP2025}.
\end{lemma}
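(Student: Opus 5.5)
The first bound follows from the fixed-point equation for $\check v^*$. Since $\check v^*=\check{\cal T}\check v^*$, the estimate in the proof of Lemma~\ref{lem:projectedDPP} gives $\|\check v^*\|_{\check S}\le C_{r,\infty}+\beta\|\check v^*\|_{\check S}$, hence $\|\check v^*\|_{\check S}\le C_{r,\infty}/(1-\beta)$. Plugging this into \eqref{eq:projectedoptimalQ} and using $|\overline r|\le C_{r,\infty}$ (Remark~\ref{rem:regular_lift}) yields $|\check Q^*|\le C_{r,\infty}+\beta C_{r,\infty}/(1-\beta)=C_{r,\infty}/(1-\beta)$.

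For the second bound, I subtract \eqref{eq:optimalQ} from \eqref{eq:projectedoptimalQ} at $(\check\mu,\check\pi)$. The rewards cancel, and the difference of two infima over ${\cal B}^0_{m,q}(\widehat p_{\varepsilon^0})$ is bounded by the supremum over $p$ of the integrated difference. This gives
\[
|\check Q^*(\check\mu,\check\pi)-Q^*(\check\mu,\check\pi)|\le \beta\sup_{e^0\in E^0}\big|\check v^*(\check{\operatorname F}(\check\mu,\check\pi,e^0))-v^*(\overline{\operatorname F}(\check\mu\hatotimes\check\pi,e^0))\big|.
\]
I split the inner difference through $v^*(\check{\operatorname F}(\check\mu,\check\pi,e^0))$. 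The term $|v^*(\operatorname{pj}_{\check S}(\nu))-v^*(\nu)|$ with $\nu=\overline{\operatorname F}(\check\mu\hatotimes\check\pi,e^0)$ is at most $L^*\varepsilon_{\check S}$, because $v^*\in\operatorname{Lip}_{b,L^*}$ and $W_1(\nu,\operatorname{pj}_{\check S}(\nu))\le\varepsilon_{\check S}$. The other term is at most $D:=\max_{\check\mu\in\check S}|\check v^*(\check\mu)-v^*(\check\mu)|$. Thus $|\check Q^*-Q^*|\le\beta(L^*\varepsilon_{\check S}+D)$, and it remains to show
\[
D\le \frac{\beta L^*\varepsilon_{\check S}+2(C_r+\beta L^*C_{\operatorname F})\varepsilon_{\check A}}{1-\beta},
\]
since then $\beta(L^*\varepsilon_{\check S}+D)$ is at most $\frac{\beta}{1-\beta}\big(L^*\varepsilon_{\check S}+2\beta(C_r+\beta L^*C_{\operatorname F})\varepsilon_{\check A}\big)$, which implies the claim (the extra factor $\beta\le 1$ only helps).

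The main step is the bound on $D$. I write $v^*(\check\mu)=\sup_{\pi\in\Pi}Q^*(\check\mu,\pi)$ and $\check v^*(\check\mu)=\max_{\check\pi\in\check\Pi}\check Q^*(\check\mu,\check\pi)$.

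For one direction, fix $\pi\in\Pi$ and pick $\check\pi$ with $d_\Pi(\pi,\check\pi)\le\varepsilon_{\check A}$ (Lemma~\ref{lem:estimateQ_pre}(i)). By Lemma~\ref{lem:estimateQ_pre}(ii) with $\mu=\check\mu$, we get $W_1(\check\mu\hatotimes\check\pi,\check\mu\hatotimes\pi)\le\varepsilon_{\check A}$. Combining Assumption~\ref{as:MFC}(ii) with Remark~\ref{rem:regular_lift}(i) gives a Lipschitz bound for $Q^*$ in $\Lambda=\check\mu\hatotimes\pi$:
\begin{itemize}
\item $\overline r$ is $2C_r$-Lipschitz in $W_1$, using the coupling argument behind $\int r(\cdot,\Lambda)\,d\Lambda$;
\item $v^*\circ\overline{\operatorname F}(\cdot,e^0)$ is $2C_{\operatorname F}L^*$-Lipschitz;
\item the robust infimum preserves uniform bounds.
\end{itemize}
Together these give $|Q^*(\check\mu,\pi)-Q^*(\check\mu,\check\pi)|\le 2(C_r+\beta L^*C_{\operatorname F})\varepsilon_{\check A}$. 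Then $Q^*(\check\mu,\check\pi)\le\check Q^*(\check\mu,\check\pi)+\beta(L^*\varepsilon_{\check S}+D)$ by the first display applied at $(\check\mu,\check\pi)$. Taking sups yields $v^*(\check\mu)-\check v^*(\check\mu)\le 2(C_r+\beta L^*C_{\operatorname F})\varepsilon_{\check A}+\beta L^*\varepsilon_{\check S}+\beta D$.

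The reverse direction is easier because $\check\Pi\subset\Pi$. It gives $\check v^*(\check\mu)-v^*(\check\mu)\le\max_{\check\pi}(\check Q^*-Q^*)(\check\mu,\check\pi)\le\beta(L^*\varepsilon_{\check S}+D)$.

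Taking the maximum over $\check\mu$ gives $D\le\beta D+\beta L^*\varepsilon_{\check S}+2(C_r+\beta L^*C_{\operatorname F})\varepsilon_{\check A}$, and rearranging proves the required bound on $D$. The obstacle I expect is justifying the Lipschitz constant $2C_r$ for $\overline r$ in $W_1$. Remark~\ref{rem:regular_lift}(ii) as printed only gives the bound $2C_r$ without a $W_1$ factor, so I would re-derive it directly: use an optimal coupling of the two measures, and bound the difference by $C_r$ times the transport cost plus $C_rW_1$ from the measure argument.
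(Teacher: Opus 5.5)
Your proof is correct and is essentially the paper's argument: the same split through $v^*(\check{\operatorname F}(\check\mu,\check\pi,e^0))$, the same $2(C_r+\beta L^*C_{\operatorname F})\varepsilon_{\check A}$ policy-projection estimate via Lemma~\ref{lem:estimateQ_pre}, and the same self-bounding rearrangement. The only differences are that you run the recursion on $D=\max_{\check\mu}|\check v^*-v^*|$ rather than on $\max_{\check S\times\check\Pi}|\check Q^*-Q^*|$, and that you take a sup over arbitrary $\pi$ where the paper uses a $\delta$-optimizer. One small arithmetic slip: substituting your bound on $D$ gives $\beta(L^*\varepsilon_{\check S}+D)\le\frac{\beta}{1-\beta}\big(L^*\varepsilon_{\check S}+2(C_r+\beta L^*C_{\operatorname F})\varepsilon_{\check A}\big)$ exactly, with no extra factor $\beta$ on the $\varepsilon_{\check A}$ term, but this is precisely the claimed bound, so nothing is lost.
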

\begin{proof}
	We first show that the first estimate holds. %
	Indeed, since $\max_{\check\pi'\in\check \Pi}\check{Q}^*(\check \mu', \check \pi')=\check v^*(\check \mu')$ for all $\check \mu'\in \check S$ (see \eqref{eq:projectedoptimalQ}), by the boundedness of $r$ (see Remark \ref{rem:regular_lift}), for any $(\check \mu,\check \pi)\in \check S \times \check\Pi$
	\begin{align*}
		|\check Q^*(\check \mu,\check \pi)| & \leq |\overline r(\check \mu \hatotimes\check \pi)| + \beta \inf_{p\in {\cal B}^0_{m,q}(\widehat{p}_{\varepsilon^0})}\int_{E^0} \max_{\check \pi'\in \check \Pi}\Big|\check Q^*(\check{\operatorname{F}}(\check \mu,\check \pi,e^0),\check \pi')\Big|p(de^0) \\
		                                    & \leq {C}_{r,\infty}+\beta \max_{(\check \mu',\check \pi')\in \check S\times \check \Pi}|\check Q^*(\check \mu',\check \pi')|.
	\end{align*}
	This ensures the first estimate to hold. %

	Next we show that the other estimate holds. Let $(\check \mu,\check \pi)\in \check S\times \check \Pi$ be given. For every $e^0\in E^0$, define $\check\mu'(e^0):=\check{\operatorname{F}}(\check \mu,\check \pi,e^0)\in \check S$ and $\mu'(e^0)=\overline{\operatorname{F}}(\check \mu \hatotimes \check \pi,e^0)\in \overline{S}$. Then,%
	\begin{align}\label{eq:est0}
		\begin{aligned}
			 & |\check Q^*(\check \mu,\check \pi)- Q^*(\check \mu,\check \pi)|                                                                                                                                    \\
			 & \quad \leq \beta \sup_{p\in {\cal B}^0_{m,q}(\widehat{p}_{\varepsilon^0})}\int \big|\check v^*(\check\mu'(e^0)) - v^*(\mu'(e^0)) \big|p(de^0)                                                      \\
			 & \quad \leq  \beta  \sup_{p\in {\cal B}^0_{m,q}(\widehat{p}_{\varepsilon^0})} \int_{E^0}\Big(|\check v^*(\check\mu'(e^0))-v^*(\check\mu'(e^0))|+ |v^*(\check\mu'(e^0))-v^*(\mu'(e^0))|\Big)p(de^0). %
		\end{aligned}
	\end{align}

	By the definitions of $\operatorname{pj}_{\check S}$ and $\check{\operatorname{F}}$ (see Definition \ref{dfn:projectedmaps} and \eqref{eq:project_S}) and the $L^*$-Lipschitz continuity of $v^*$ (see Proposition \ref{pro:LNP2025}), it holds that for every $e^0\in E^0$
	\begin{align}\label{eq:est1}
		|v^*(\check\mu'(e^0))-v^*(\mu'(e^0))|\leq L^*{W}_1\big(\check\mu'(e^0),\mu'(e^0)\big)\leq L^*\varepsilon_{\check S}.
	\end{align}

	We now claim that %
	for every $e^0\in E^0$
	\begin{align}\label{eq:est2}
		|\check v^*(\check \mu'(e^0))-v^*(\check \mu'(e^0))|\leq \max_{(\check \mu',\check \pi')\in\check S\times \check \Pi}|\check Q^*(\check \mu',\check\pi')-Q^*(\check \mu',\check\pi')|+2(C_r+\beta L^* C_{\operatorname{F}})  \varepsilon_{\check {A}}.
	\end{align}

	To see this, note that $\max_{\check\pi'\in\check \Pi}\check{Q}^*(\check \mu', \check \pi')=\check v^*(\check \mu')$ for all $\check \mu'\in \check S$ and that $\check \Pi$ is finite. %
	Thus, for any $e^0\in E^0$, there exists an optimizer $\check \pi^{*}(e^0)\in \check\Pi$  such that
	\(
	\check v^*(\check \mu'(e^0))= \check Q^*(\check \mu'(e^0),\check \pi^{*}(e^0)).
	\)

	Then, since $v^*(\check \mu'(e^0))\geq Q^*(\check \mu'(e^0),\check\pi^{*}(e^0))$ for every $e^0\in E^0$ (see Proposition \ref{pro:LNP2025} and \eqref{eq:optimalQ}),
	\begin{align}\label{eq:est3}
		\begin{aligned}
			\check v^*(\check \mu'(e^0))-v^*(\check \mu'(e^0))
			 & \leq \check Q^*(\check \mu'(e^0),\check \pi^{*}(e^0)) -Q^*(\check \mu'(e^0),\check \pi^{*}(e^0)) \\
			 & \leq
			\max_{(\check \mu',\check \pi')\in\check S\times \check \Pi}|\check Q^*(\check \mu',\check\pi')-Q^*(\check \mu',\check\pi')|.
		\end{aligned}
	\end{align}

	Moreover, %
	for any $\delta>0$ and any $e^0\in E^0$, there exists a $\delta$-optimizer $\pi^\delta(e^0)\in \Pi$ such that \[
		v^*(\check \mu'(e^0))=\sup_{\pi\in \Pi}Q^*(\check \mu'(e^0),\pi)\leq Q^*(\check \mu'(e^0),\pi^\delta(e^0))+\delta.
	\]

	Using $\check \pi^*(e^0)\in \check \Pi$ (which maximizes $\check v^*(\check \mu'(e^0))$), $\pi^\delta(e^0)\in \Pi$, and the projection $\operatorname{pj}_{\check \Pi}$ in Definition \ref{dfn:projectedmaps}\;(ii), we have
	\begin{align}
		\check v^*(\check \mu'(e^0))-v^*(\check \mu'(e^0))+\delta
		 & \geq  \check Q^*(\check \mu'(e^0),\check \pi^{*}(e^0))- Q^*(\check \mu'(e^0),\pi^{\delta}(e^0))\nonumber                                                                       \\
		 & = \check Q^*(\check \mu'(e^0),\check \pi^{*}(e^0))-\check Q^*(\check \mu'(e^0),\operatorname{pj}_{\check \Pi}( \pi^{\delta}(e^0)))\nonumber                                    \\
		 & \quad +\check Q^*(\check \mu'(e^0),\operatorname{pj}_{\check \Pi}( \pi^{\delta}(e^0)))-Q^*(\check \mu'(e^0),\operatorname{pj}_{\check \Pi}( \pi^{\delta}(e^0)))\label{eq:est4} \\
		 & \quad +Q^*(\check \mu'(e^0),\operatorname{pj}_{\check \Pi}( \pi^{\delta}(e^0)))-Q^*(\check \mu'(e^0),\pi^{\delta}(e^0))\nonumber                                               \\
		 & =:\operatorname{I}(e^0)+\operatorname{II}(e^0)+\operatorname{III}(e^0).\nonumber
	\end{align}

	By the optimality of $\check \pi^{*}(e^0)$, we have \(
	\operatorname{I}(e^0)\geq 0.
	\)
	Moreover, %
	\[
		\operatorname{II}(e^0)\geq -\max_{(\check \mu',\check \pi')\in\check S\times \check \Pi}|\check Q^*(\check \mu',\check\pi')-Q^*(\check \mu',\check\pi')|.
	\]
	Lastly,
	\begin{align*}
		\begin{aligned}
			\operatorname{III}(e^0)%
			 & \geq -\Big|\overline r\big(\check \mu'(e^0) \hatotimes \operatorname{pj}_{\check \Pi}( \pi^{\delta}(e^0))\big)- \overline r\big(\check \mu'(e^0) \hatotimes \pi^{\delta}(e^0)\big)\Big|                                                                                                                                                                                \\
			 & \quad -  \beta \sup_{p\in {\cal B}^0_{m,q}(\widehat{p}_{\varepsilon^0})}\int_{E^0}\bigg|v^*\Big(\overline{\operatorname{F}}\big(\check \mu'(e^0) \hatotimes \operatorname{pj}_{\check \Pi}( \pi^{\delta}(e^0))\big),\tilde e^0\Big)-v^*\Big(\overline{\operatorname{F}}\big(\check \mu'(e^0) \hatotimes  \pi^{\delta}(e^0)\big),\tilde e^0\Big)  \bigg| p(d\tilde e^0) \\
			 & \geq -2(C_r+\beta L^* C_{\operatorname{F}}) {W}_1\big(\check \mu'(e^0) \hatotimes \operatorname{pj}_{\check \Pi}( \pi^{\delta}(e^0)),\check \mu'(e^0) \hatotimes  \pi^{\delta}(e^0)\big)                                                                                                                                                                               \\
			 & \geq -2(C_r+\beta L^* C_{\operatorname{F}}) d_{\operatorname{\Pi}} (\operatorname{pj}_{\check \Pi}( \pi^{\delta}(e^0)),  \pi^{\delta}(e^0))                                                                                                                                                                                                                            \\
			 & \geq -2(C_r+\beta L^* C_{\operatorname{F}})  \varepsilon_{\check {A}}.
		\end{aligned}
	\end{align*}
	The second inequality follows from the apriori estimates for $\overline r$ and $\overline{\operatorname{F}}$ given in Remark \ref{rem:regular_lift}, together with the $L^*$-Lipschitz continuity of $v^*$ in Proposition~\ref{pro:LNP2025}. The third inequality follows from Lemma~\ref{lem:estimateQ_pre}\;(ii), and the last inequality follows from Lemma~\ref{lem:estimateQ_pre}\;(i).

	Combining the estimates for $\operatorname{I}(e^0),$ $\operatorname{II}(e^0)$ and $\operatorname{III}(e^0)$ with \eqref{eq:est4}, we have
	\begin{align}\label{eq:est5}
		\check v^*(\check \mu'(e^0))-v^*(\check \mu'(e^0))+\delta
		\geq -\max_{(\check \mu',\check \pi')\in\check S\times \check \Pi}|\check Q^*(\check \mu',\check\pi')-Q^*(\check \mu',\check\pi')|-2(C_r+\beta L^* C_{\operatorname{F}})  \varepsilon_{\check {A}}.
	\end{align}
	By letting  $\delta\downarrow 0$ this %
	and then using \eqref{eq:est3}, we have indeed \eqref{eq:est2}, as claimed.

	Finally, combining \eqref{eq:est0}, \eqref{eq:est1} and \eqref{eq:est2} ensures %
	the second inequality to hold.

	This completes the proof.
\end{proof}

The estimate of $|\check Q-Q^*|$ established in Lemma~\ref{lem:liftedoptimalQ} is measured on the quantized space $\check{S} \times \check{\Pi}$, rather than on the original space $\overline{S} \times \Pi$. To obtain the desired discretization error for the optimal $Q$-function, one must account for both the error induced by the projection (from $\overline{S} \times {\Pi}$ onto $\check{S} \times \check{\Pi}$) and the estimate of $|\check Q-Q^*|$ on the quantized spaces.

The following lemma provides such an estimate, quantifying the gap between the optimal $Q$-function and the discretized $Q$-function when evaluated on the original domain via the projection in Definition \ref{dfn:projectedmaps}.

\begin{lemma}\label{lem:proj_errrors}
	Under Assumption \ref{as:MFC}, %
	we have for every~$(\mu,\pi)\in\overline S\times \Pi$ that
	\begin{align*}
		|\check Q^*(\operatorname{pj}_{\check S}(\mu),\operatorname{pj}_{\check \Pi}(\pi))- Q^*(\mu,\pi)|\leq C_1\varepsilon_{\check S}+{C_2}\varepsilon_{\check A},
	\end{align*}
	with the constants $C_1$ and $C_2$ given in Theorem \ref{thm:asynchro}.
\end{lemma}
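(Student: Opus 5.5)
Write $\check\mu:=\operatorname{pj}_{\check S}(\mu)$ and $\check\pi:=\operatorname{pj}_{\check\Pi}(\pi)$. We split the error with the triangle inequality:
\[
|\check Q^*(\check\mu,\check\pi)-Q^*(\mu,\pi)|\le |\check Q^*(\check\mu,\check\pi)-Q^*(\check\mu,\check\pi)|+|Q^*(\check\mu,\check\pi)-Q^*(\mu,\pi)|.
\]
The first term is controlled directly by Lemma~\ref{lem:liftedoptimalQ}. It is at most $\frac{\beta}{1-\beta}L^*\varepsilon_{\check S}+\frac{2\beta}{1-\beta}(C_r+\beta L^*C_{\operatorname{F}})\varepsilon_{\check A}$.

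For the second term we use the Lipschitz regularity of $Q^*$ in the lifted state--action variable. From \eqref{eq:optimalQ}, the bound $|\overline r(\Lambda)-\overline r(\tilde\Lambda)|\le 2C_r W_1(\Lambda,\tilde\Lambda)$ from Remark~\ref{rem:regular_lift} (this is the intended Lipschitz estimate, as already used in the proof of Lemma~\ref{lem:liftedoptimalQ}), the $L^*$-Lipschitz continuity of $v^*$ (Proposition~\ref{pro:LNP2025}), and Remark~\ref{rem:regular_lift}(i) with $e^0=\tilde e^0$, we get
\[
|Q^*(\check\mu,\check\pi)-Q^*(\mu,\pi)|\le 2(C_r+\beta L^*C_{\operatorname{F}})\,W_1(\check\mu\hatotimes\check\pi,\mu\hatotimes\pi).
\]
This is the same computation as for the term $\operatorname{III}(e^0)$ in the proof of Lemma~\ref{lem:liftedoptimalQ}. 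The difference of the infima over ${\cal B}^0_{m,q}(\widehat p_{\varepsilon^0})$ is bounded by the supremum over $p$ of the integrated pointwise difference. We then apply Lemma~\ref{lem:estimateQ_pre}(ii). Together with $W_1(\check\mu,\mu)\le\varepsilon_{\check S}$ (Definition~\ref{dfn:projectedmaps}) and $d_\Pi(\check\pi,\pi)\le\varepsilon_{\check A}$ (Lemma~\ref{lem:estimateQ_pre}(i) and the minimality of $\operatorname{pj}_{\check\Pi}$), this bounds the second term by
\[
2(C_r+\beta L^*C_{\operatorname{F}})\big((1+\Delta_A\underline\Delta_S^{-1})\varepsilon_{\check S}+\varepsilon_{\check A}\big).
\]

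Adding the two bounds, the coefficient of $\varepsilon_{\check S}$ is exactly $C_1$. The coefficient of $\varepsilon_{\check A}$ is $2(C_r+\beta L^*C_{\operatorname{F}})(1+\frac{\beta}{1-\beta})=\frac{2(C_r+\beta L^*C_{\operatorname{F}})}{1-\beta}=C_2$. The only step needing care is the Lipschitz estimate for $Q^*$, specifically that the robust infimum preserves Lipschitz bounds. This is routine because $|\inf_p\int f\,dp-\inf_p\int g\,dp|\le\sup_p\int|f-g|\,dp$.
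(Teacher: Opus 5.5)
Your proposal is correct and matches the paper's proof: the paper first establishes $|Q^*(\operatorname{pj}_{\check S}(\mu),\operatorname{pj}_{\check \Pi}(\pi))-Q^*(\mu,\pi)|\le 2(C_r+\beta L^*C_{\operatorname{F}})\big((1+\Delta_A\underline{\Delta}_S^{-1})\varepsilon_{\check S}+\varepsilon_{\check A}\big)$ using Remark~\ref{rem:regular_lift}, the $L^*$-Lipschitz continuity of $v^*$, and Lemma~\ref{lem:estimateQ_pre}, and then adds the second estimate of Lemma~\ref{lem:liftedoptimalQ}, with the same constant bookkeeping as yours. You are also right that the reward bound in Remark~\ref{rem:regular_lift}(ii) should read $2C_r W_1(\Lambda,\tilde\Lambda)$, which is the form the paper actually uses.
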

\begin{proof}
	We first claim that  for any $(\mu,\pi)\in \overline{S}\times \Pi$
	\begin{align}\label{eq:pj_error_Q*}
		|Q^*(\operatorname{pj}_{\check S}(\mu),\operatorname{pj}_{\check \Pi}(\pi))- Q^*(\mu,\pi)|\leq 2(C_r+\beta L^* C_{\operatorname{F}})\big((1+\Delta_A\underline{{\Delta}}_S^{-1})\varepsilon_{\check S}+\varepsilon_{\check A}\big).
	\end{align}
	Indeed, let $(\mu,\pi)\in \overline S\times \Pi$ and set $\check \mu:=\operatorname{pj}_{\check S}(\mu)$ and $\check \pi:=\operatorname{pj}_{\check \Pi}(\pi)$. Then,
	\begin{align}
		\begin{aligned}
			 & |Q^*(\check\mu,\check \pi)- Q^*(\mu,\pi)|                                                                                                                                                                                                                                                                                             \\
			 & \quad \leq|\overline r(\check \mu \hatotimes \check \pi )-\overline r( \mu \hatotimes \pi )|+\beta \sup_{p\in {\cal B}^0_{m,q}(\widehat{p}_{\varepsilon^0})} \int_{E^0}\big|  v^*(\overline{\operatorname{F}}(\check \mu \hatotimes \check \pi,e^0))- v^*(\overline{\operatorname{F}}(\mu \hatotimes \pi,e^0)) \big| p(de^0)\nonumber \\
			 & \quad \leq 2(C_r+\beta L^* C_{\operatorname{F}}) {W}_1(\check \mu \hatotimes \check \pi ,\mu \hatotimes \pi)                                                                                                                                                                                                                          \\
			 & \quad \leq  2(C_r+\beta L^* C_{\operatorname{F}}) \big((1+\Delta_A\underline{{\Delta}}_S^{-1}) {W}_1(\check \mu, \mu)+d_{\Pi}(\check \pi, \pi)\big)                                                                                                                                                                                   \\
			 & \quad \leq   2(C_r+\beta L^* C_{\operatorname{F}})   \big((1+\Delta_A\underline{{\Delta}}_S^{-1}) \varepsilon_{\check S}+\varepsilon_{\check A}\big).
		\end{aligned}
	\end{align}
	The second inequality follows from the apriori estimates for $\overline r$ and $\overline{\operatorname{F}}$ given in Remark \ref{rem:regular_lift} %
	, together with the $L^*$-Lipschitz continuity of $v^*$ (see Proposition \ref{pro:LNP2025}).
	The third inequality follows from Lemma \ref{lem:estimateQ_pre}~(ii), and the last inequality follows from the definition of $(\check \mu,\check \pi)=(\operatorname{pj}_{\check S}(\mu),\operatorname{pj}_{\check \Pi}(\pi))$ (see Definition~\ref{dfn:projectedmaps}) and Lemma~\ref{lem:estimateQ_pre}~(i).

	Combining \eqref{eq:pj_error_Q*} with the second estimate in Lemma \ref{lem:liftedoptimalQ} ensures that for any $(\mu,\pi)\in \overline{S}\times \Pi$
	\begin{align*}
		|\check Q^*(\operatorname{pj}_{\check S}(\mu),\operatorname{pj}_{\check \Pi}(\pi))- Q^*(\mu,\pi)| & \leq|Q^*(\operatorname{pj}_{\check S}(\mu),\operatorname{pj}_{\check \Pi}(\pi))- Q^*(\mu,\pi)|                                                                         \\
		                                                                                                  & \quad +|\check Q^*(\operatorname{pj}_{\check S}(\mu),\operatorname{pj}_{\check \Pi}(\pi))- Q^*(\operatorname{pj}_{\check S}(\mu),\operatorname{pj}_{\check \Pi}(\pi))| \\
		                                                                                                  & \leq C_1\varepsilon_{\check S}+{C_2}\varepsilon_{\check A}.\qedhere
	\end{align*}
\end{proof}

Lemma~\ref{lem:liftedoptimalQ} shows that the discretized $Q$-function in \eqref{eq:projectedoptimalQ} serves as a suitable approximation target for learning the optimal $Q$-function in \eqref{eq:optimalQ}. In particular, the error bound in the lemma coincides with the desired accuracy level in the convergence result of Theorem \ref{thm:asynchro}.

\section{Proofs for Convergence of robust $Q$-Learning Algorithm}\label{sec:proof}
The goal of this section is to provide the proofs for our main results in Theorem \ref{thm:asynchro}, Theorem \ref{thm:asynchro_rate}, and Corollary \ref{cor:Qlearningrate}. To that end, we begin by establishing two key lemmas that are used to derive both the asymptotic convergence and finite-time iteration bound analyses in the following theorems. Throughout this section, we denote by
\[
	\|\check f\|_{\check S\times \check \Pi}:= \max_{(\check \mu,\check \pi)\in \check S\times \check \Pi}|\check f(\check \mu,\check \pi)|\quad \mbox{for any mapping $\check f:\check S\times \check \Pi\to\mathbb{R}$.}
\]

The first lemma establishes the existence of the maximizer $\lambda_{t,(\check \mu, \check \pi)}^*$ in~\eqref{eq:defconjugateJ} for any $(\check \mu, \check \pi)\in \check S\times \check \Pi$ and $t\geq 0$, and provides uniform-in-time bounds for the maximizers $(\lambda_{t,(\cdot, \cdot)}^*)_{t\geq 0}$ and the $Q$-functions $(\check Q_t)_{t\geq 0}$ introduced in Framework~\ref{frame:Qlearning}\;(iii).%

\begin{lemma}\label{lem:Q_est}
	Suppose that Assumptions \ref{as:MFC}\;(i),(ii) and \ref{as:Qlearning}\;(i) are satisfied. Then, for both the synchronous and asynchronous versions of the $Q$-learning algorithm in Framework \ref{frame:Qlearning}, the following statements hold: for every $t\geq0$ and $(\check \mu,\check \pi)\in \check S\times\check \Pi$,
	\begin{itemize}
		\item [(i)] There exists a maximizer $\lambda_{t,(\check \mu, \check \pi)}^*$ of $\Phi_{t,(\check\mu,\check \pi)}$ in \eqref{eq:defconjugateJ}.
		\item [(ii)] All maximizers $\lambda_{t,(\check \mu, \check \pi)}^*$ of $\Phi_{t,(\check\mu,\check \pi)}$ satisfy $\lambda_{t,(\check \mu, \check \pi)}^*\leq \frac{2 \check Q_{\operatorname{max}}}{m^q}$.
		\item [(iii)] \(
		      \|\check Q_t\|_{\check S\times \check \Pi}\leq \check Q_{\operatorname{max}}.\)
	\end{itemize}
\end{lemma}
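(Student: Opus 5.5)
We argue by induction on $t\ge 0$. The induction hypothesis is the bound (iii) at time $t$, namely $\|\check Q_t\|_{\check S\times\check\Pi}\le \check Q_{\operatorname{max}}$. From it we derive (i) and (ii) at time $t$, and then (iii) at time $t+1$. The base case $t=0$ is Assumption~\ref{as:Qlearning}\;(i), since $|\check Q_0|\le\check Q_{\operatorname{max}}$. Throughout we assume $m>0$; when $m=0$ the penalty $-m^q\lambda$ vanishes and $\phi_{t,(\check\mu,\check\pi)}$ is nondecreasing in $\lambda$, so (ii) is vacuous and that case is treated as in Remark~\ref{rem:dual_Phi}\;(ii).

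\emph{Existence and bound for the dual optimizer.} Fix $(\check\mu,\check\pi)$. By the induction hypothesis and \eqref{eq:defJ}, $|J_{t,(\check\mu,\check\pi)}|\le \check Q_{\operatorname{max}}$ on $E^0$. For any $\lambda\ge 0$, choosing $\tilde e^0=e^0$ in \eqref{eq:lctransform} gives
\[
(-J)^{\lambda}(e^0)\ge -J(e^0), \qquad\text{while also}\qquad (-J)^{\lambda}(e^0)\le \max(-J)\le \check Q_{\operatorname{max}}.
\]
Hence $-\check Q_{\operatorname{max}}\le -(-J)^\lambda\le \check Q_{\operatorname{max}}$. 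It follows that
\[
\phi_{t,(\check\mu,\check\pi)}(\lambda)\le \check Q_{\operatorname{max}}-m^q\lambda \qquad\text{and}\qquad \phi_{t,(\check\mu,\check\pi)}(0)\ge -\check Q_{\operatorname{max}}.
\]
Consequently, every $\lambda>2\check Q_{\operatorname{max}}/m^q$ satisfies $\phi_{t,(\check\mu,\check\pi)}(\lambda)<\phi_{t,(\check\mu,\check\pi)}(0)$, so no such $\lambda$ can be a maximizer. This proves (ii), and it also reduces the supremum in \eqref{eq:defconjugateJ} to the compact interval $[0,2\check Q_{\operatorname{max}}/m^q]$.

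Next we show continuity. Since $E^0$ is finite, $\lambda\mapsto(-J)^\lambda(e^0)$ is a maximum of finitely many affine functions of $\lambda$, hence convex and continuous. The integral against $\widehat p_{\varepsilon^0}$ is a finite sum, so $\phi_{t,(\check\mu,\check\pi)}$ is continuous (indeed concave). A continuous function on a compact interval attains its supremum, which gives (i).

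\emph{Bound on the iterate.} For pairs with $\alpha_{t,(\check\mu,\check\pi)}=0$ (asynchronous, non-visited), $\check Q_{t+1}=\check Q_t$ and the bound is inherited. Otherwise $\alpha:=\alpha_{t,(\check\mu,\check\pi)}\in(0,1]$ and $\check Q_{t+1}$ is a convex combination of $\check Q_t(\check\mu,\check\pi)$ and the target
\[
\overline r(\check\mu\hatotimes\check\pi)+\beta\big[-(-J)^{\lambda^*}(\varepsilon^0_{t+1})-m^q\lambda^*\big].
\]
Using the two-sided bound $|-(-J)^{\lambda^*}|\le \check Q_{\operatorname{max}}$ and (ii) in the form $m^q\lambda^*\le 2\check Q_{\operatorname{max}}$, the bracket has absolute value at most $3\check Q_{\operatorname{max}}$. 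The target is therefore bounded by
\[
C_{r,\infty}+3\beta\check Q_{\operatorname{max}}=\check Q_{\operatorname{max}},
\]
where the equality is the defining identity of $\check Q_{\operatorname{max}}=C_{r,\infty}/(1-3\beta)$, valid because $\beta<1/3$. The convex combination then preserves the bound, which closes the induction and gives (iii).

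The main point to get right is this last step. The sample-wise target is not a worst-case expectation: it includes the penalty term $-m^q\lambda^*$, which is unbounded unless $\lambda^*$ is controlled. The argument works only because (ii) is established at time $t$ before bounding $\check Q_{t+1}$, and this interplay is exactly what produces the factor $3$ and the condition $\beta<1/3$.
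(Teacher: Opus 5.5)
Your proposal is correct and is essentially the paper's proof. It runs induction on $\|\check Q_t\|_{\check S\times\check\Pi}\le\check Q_{\operatorname{max}}$ and uses the two-sided bound $|(-J_{t,(\check\mu,\check\pi)})^{\lambda}|\le\check Q_{\operatorname{max}}$ to get $\phi_{t,(\check\mu,\check\pi)}(\lambda)\le\check Q_{\operatorname{max}}-m^q\lambda$ and $\phi_{t,(\check\mu,\check\pi)}(0)\ge-\check Q_{\operatorname{max}}$; this gives both existence and $\lambda^*_{t,(\check\mu,\check\pi)}\le 2\check Q_{\operatorname{max}}/m^q$, and the identity $C_{r,\infty}+3\beta\check Q_{\operatorname{max}}=\check Q_{\operatorname{max}}$ then closes the induction.

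The differences from the paper are minor. For the upper bound on $\check Q_{t+1}$, the paper drops the nonpositive term $-m^q\lambda^*_{t,(\check\mu,\check\pi)}$, obtaining $\check Q_{\operatorname{max}}(1-2\alpha_{t,(\check\mu,\check\pi)}\beta)$, and uses the factor $3\beta$ only for the lower bound; you bound both sides symmetrically, which works equally well. Your separate remark on $m=0$ covers a case the paper leaves implicit, although existence there would need one more line: $\phi_{t,(\check\mu,\check\pi)}$ is eventually constant because $E^0$ is finite.
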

\begin{proof}
	We first consider the synchronous case under Framework \ref{frame:Qlearning}\;(i),(iii).

	\noindent \emph{(Step 1)}
	Assume first that $\|\check Q_t\|_{\check S\times \check \Pi}\leq \check Q_{\operatorname{max}}$ for some $t\ge0$. We first show the existence of the maximizer $\lambda_{t,(\check \mu, \check \pi)}^*$ in \eqref{eq:defconjugateJ} for any $(\check \mu,\check \pi)\in \check S\times\check \Pi$.

	To see this, we recall the definition of $J_{t,(\check \mu, \check \pi)}$ in \eqref{eq:defJ}. For $\lambda\geq 0$ and  $e^0\in E^0$, it holds that%
	\begin{align*}
		-(-J_{t,(\check \mu, \check \pi)})^{\lambda }(e^0)%
		=\min_{\tilde e^0\in E^0} \bigg\{\max_{\check\pi'\in \check \Pi}\check {Q}_t(\check{\operatorname{F}}(\check \mu,\check \pi,\tilde e^0),\check\pi'\big) + \lambda |e^0-\tilde e^0|^q\bigg\}.%
	\end{align*}
	Since $E^0$ is finite, the map $\ni\lambda\mapsto-(-J_{t,(\check\mu,\check\pi)})^\lambda(e^0)$ is concave and continuous for any~$e^0\in E^0$. %

	Moreover, since for every  $\lambda\ge0$ and $e^0\in E^0$
	\begin{align}\label{eq:async_step1_bound}
		\big|(-J_{t,(\check\mu,\check\pi)})^\lambda(e^0)\big|
		\leq \|\check Q_t\|_{\check S\times \check \Pi}+\lambda \min_{\tilde e^0\in E^0} |e^0-\tilde e^0|^q \leq \check{Q}_{\operatorname{M}},
	\end{align}
	the map $\phi_{t,(\check\mu,\check \pi)}(\cdot)$ in \eqref{eq:defconjugateJ} is continuous and satisfies
	\begin{align}\label{eq:bound_lambda}
		\phi_{t,(\check\mu,\check \pi)}(\lambda)\leq \check Q_{\operatorname{max}} -m^q \lambda =: \overline{\phi}(\lambda) \quad \mbox{for all $\lambda \geq 0$}.
	\end{align}
	This implies $\limsup_{\lambda \to \infty}\phi_{t,(\check\mu,\check \pi)}(\lambda)=-\infty$. Therefore, $\lambda_{t,(\check \mu, \check \pi)}^*$ exists for any $(\check \mu,\check \pi)\in \check S\times\check \Pi$.

	Then we show the upper bound for $\lambda_{t,(\check \mu, \check \pi)}^*$. To see this, we note that for any $(\check \mu,\check \pi)\in \check S\times\check \Pi$
	\begin{align}\label{eq:lower_bound}
		\begin{aligned}
			\Phi_{t,(\check\mu,\check \pi)}\geq \phi_{t,(\check \mu,\check \pi)}(0) & = \min_{\tilde e^0\in E^0} J_{t,(\check \mu, \check \pi)}(\tilde e^0)\geq -\|\check Q_t\|_{\check S\times \check \Pi}\geq -\check Q_{\operatorname{max}}.
		\end{aligned}
	\end{align}
	Since $\overline\phi(\overline \lambda)=-\check Q_{\operatorname{max}}<0$ with $\overline{\lambda}:=\frac{2\check Q_{\operatorname{max}}}{m^q}$,  we have by \eqref{eq:bound_lambda}  that   for every $\lambda \geq \overline \lambda$,
	\[
		\phi_{t,(\check \mu,\check \pi)}(\lambda)\leq \overline \phi(\lambda) \leq -\check Q_{\operatorname{max}}.\]

	Combining this with \eqref{eq:lower_bound} yields that  $\lambda_{t,(\check \mu, \check \pi)}^*\leq \overline \lambda$ for any $(\check \mu,\check \pi)\in \check S\times\check \Pi$.

	\noindent \emph{(Step 2)} We now establish the bounds for $\lambda_{t,(\check \mu,\check \pi)}^*$ and $\check Q_t$ inductively over time $t\geq 0$: %
	The claim holds at $t=0$ by assumption, hence we have by Step 1 that for any $(\check \mu,\check \pi)\in \check S\times\check \Pi$,  $\lambda_{0,(\check \mu, \check \pi)}^*\leq \overline \lambda$.

	Fix $t\ge0$ and suppose $\|\check Q_t\|_{\check S\times \check \Pi}\leq \check Q_{\operatorname{max}}$. %
	Then for any $(\check \mu,\check \pi)\in \check S\times\check \Pi$, %
	we have %
	\begin{align}
		\check Q_{t+1}(\check \mu,\check \pi) & \leq (1-\alpha_{t,(\check \mu,\check \pi)})|\check Q_{t}(\check \mu,\check \pi)|+ \alpha_{t,(\check \mu,\check \pi)}\Big(|\overline r(\check \mu \hatotimes\check \pi)|-\beta (-J_{t,(\check \mu, \check \pi)})^{\lambda_{t,(\check \mu, \check \pi)}^*}(\varepsilon_{t+1,(\check \mu, \check \pi)}^0) \Big)\nonumber \\
		                                      & \leq (1-\alpha_{t,(\check \mu,\check \pi)})\check Q_{\operatorname{max}}+ \alpha_{t,(\check \mu,\check \pi)}\Big(
		C_{r,\infty}+\beta \big| (-J_{t,(\check \mu, \check \pi)})^{\lambda_{t,(\check \mu, \check \pi)}^*}(\varepsilon_{t+1,(\check \mu,\check \pi)}^0)\big| \Big) \label{eq:upper_Q1}                                                                                                                                                                               \\
		                                      & \leq (1-\alpha_{t,(\check \mu,\check \pi)})\check Q_{\operatorname{max}} +\alpha_{t,(\check \mu,\check \pi)}(C_{r,\infty}+\beta \check Q_{\operatorname{max}})=\check Q_{\operatorname{max}}(1-2\alpha_{t,(\check \mu,\check \pi)}\beta) < \check Q_{\operatorname{max}}, \nonumber
	\end{align}
	where the second inequality follows from the boundedness of $r$ (see Remark \ref{rem:regular_lift}) and the third inequality follows from \eqref{eq:async_step1_bound}.

	In a similar manner, by using \(
	\lambda_{t,(\check \mu, \check \pi)}^*\leq \overline \lambda,
	\)
	we have for any $(\check \mu,\check \pi)\in \check S\times\check \Pi$,
	\begin{align*}
		\check Q_{t+1}(\check \mu,\check \pi)\geq & -(1-\alpha_{t,(\check \mu,\check \pi)})|\check Q_t(\check \mu,\check \pi) |                                                                                                                                                                                                                                                            \\
		                                          & + \alpha_{t,(\check \mu,\check \pi)} \Big(- |\overline r(\check \mu \hatotimes\check \pi)| + \beta  \Big[-(-J_{t,(\check \mu, \check \pi)})^{\lambda_{t,(\check \mu, \check \pi)}^*}(\varepsilon_{t+1,(\check \mu, \check \pi)}^0)-m^q \lambda_{t,(\check \mu,\check \pi)}^* \Big] \Big) \nonumber                                     \\
		\geq                                      & -(1-\alpha_{t,(\check \mu,\check \pi)})\check Q_{\operatorname{max}} - \alpha_{t,(\check \mu,\check \pi)}\Big( C_{r,\infty} + \beta \big|(-J_{t,(\check \mu, \check \pi)})^{\lambda_{t,(\check \mu, \check \pi)}^*}(\varepsilon_{t+1,(\check \mu, \check \pi)}^0)\big|+\beta  m^q \lambda_{t,(\check \mu,\check \pi)}^*\Big) \nonumber \\
		\geq                                      & -(1-\alpha_{t,(\check \mu,\check \pi)}) \check Q_{\operatorname{max}}- \alpha_{t,(\check \mu,\check \pi)} ( C_{r,\infty}+3\beta \check Q_{\operatorname{max}})=-\check Q_{\operatorname{max}},\nonumber                                                                                                                                %
	\end{align*}
	where the last equality holds because $\check Q_{\operatorname{max}}=\frac{C_{r,\infty}}{1-3\beta}$ with $\beta<\frac{1}{3}$ (see Assumption \ref{as:Qlearning}\;(i)).

	Combining this with \eqref{eq:upper_Q1} yields $\|\check Q_{t+1}\|_{\check S\times \check \Pi}\leq \check Q_{\operatorname{max}}$.

	By induction, the estimate in (iii) for $\check Q_t$ holds for all $t$ and $(\check\mu,\check\pi)\in \check S\times \check \Pi$. Step~1 then ensures that the other estimate in (ii) %
	$\lambda^*_{t,(\check\mu,\check\pi)}$ hold for all $t$ and $(\check\mu,\check\pi)\in \check S\times \check \Pi$.

	For the case of the asynchronous version, the $Q$-learning update rule in \eqref{eq:asynchro_Q1} occurs only along the pre-sampled, projected dataset $(\check \mu_t,\check\pi_t)_{t\geq 0}\subseteq \check S\times \check \Pi$ in Framework \ref{frame:Qlearning}\;(ii). Therefore, the arguments of Steps 1 and 2 can directly be applied also for the asynchronous case, yielding the existence of $(\lambda^*_{t,(\cdot,\cdot)})_{t\geq 0}$ and the bounds for $(\lambda^*_{t,(\cdot,\cdot)})_{t\geq 0}$ and $(\check Q_t)_{t\geq 0}$ given in (ii) and (iii).

	This completes the proof.
\end{proof}

Using Remark \ref{rem:dual_Phi}\;(ii), we can rewrite the update rule~\eqref{eq:asynchro_Q1} in the standard form of stochastic iterative algorithms considered in \cite{bertsekas2025neuro}: for every $t\geq 0$ and $(\check \mu,\check\pi)\in \check S\times \check \Pi$,
\begin{align}\label{eq:asynchro_Q11}
	\begin{aligned}
		 & \check{Q}_{t+1}(\check \mu, \check \pi)=(1-\alpha_{t,(\check \mu, \check \pi)})\check{Q}_{t}(\check \mu, \check \pi)  +\alpha_{t,(\check \mu, \check \pi)}\big(                                                                                                              %
		\check {\cal H}\check Q_t( \check \mu,\check \pi) + \beta  Z_{t+1,(\check \mu,\check \pi)}\big),                                                                                                                                                                                \\
		 & \quad \mbox{where}\quad Z_{t+1,(\check \mu,\check \pi)}:= \Big[-(-J_{t,(\check \mu, \check \pi)})^{\lambda_{t,(\check \mu, \check \pi)}^*}(\varepsilon_{t+1,(\check \mu, \check \pi)}^0)-m^q \lambda_{t,(\check \mu, \check \pi)}^* \Big] - \Phi_{t,(\check\mu,\check \pi)}, %
	\end{aligned}
\end{align}
where the operator $\check{\mathcal H}$ on the set $\{\check Q:\check S\times \check \Pi\to \mathbb{R}\}$ is defined by setting
for every $(\check \mu,\check \pi)\in \check S\times \check \Pi$,
\begin{align}\label{eq:projectedH}
	\check{{\cal H}}\check{Q}(\check\mu,\check\pi):= \overline r(\check \mu \hatotimes \check \pi)+ \beta \inf_{p\in {\cal B}^0_{m,q}(\widehat{p}_{\varepsilon^0})}\int_{E^0}\max_{\check\pi'\in \check \Pi}\check {Q}\big(\check{\operatorname{F}}(\check \mu,\check \pi,e^0),\check\pi'\big)p(de^0).
\end{align}

Next we let $\check{\mathbb{F}}^0:=(\check{\cal F}_{t}^0)_{t\geq 0}$ be defined by
\begin{align}\label{eq:filtration_mupi}
	\check{\cal F}_{t}^0
	:=
	\sigma\big(
	\varepsilon^0_{s,(\check \mu,\check \pi)}
	:(\check \mu,\check \pi)\in \check S\times \check \Pi,1\le s \le t\big),
	\quad
	t\geq 1,\quad \mbox{with $\check{\cal F}_{0}^0
			:=
			\{\emptyset,\Omega^0\}.$}
\end{align}

\begin{remark}\label{rem:fixed_checkQ}
	By definition of $\check Q^*$ in \eqref{eq:projectedoptimalQ} and the operator $\check {\cal H}$ in \eqref{eq:projectedH}, we have $\check Q^*= \check {\cal H}\check Q^*$.
\end{remark}

\begin{remark}\label{rem:msr_indep} Recall the probability space $(\Omega^0,{\cal F}^0,\widehat{\mathbb P}^0)$ introduced in Framework~\ref{frame:Qlearning} and the filtration $\check{\mathbb{F}}^0$ defined in \eqref{eq:filtration_mupi}. The following hold for every $t\geq 0$ and every $(\check \mu,\check \pi)\in \check S\times \check \Pi$.
	\begin{itemize}
		\item[(i)] %
			The random variable $Z_{t+1,(\check \mu,\check \pi)}$ in \eqref{eq:asynchro_Q11} is $\check{\cal F}_{t+1}^0$-measurable,
		\item[(ii)] $\Phi_{t,(\check\mu,\check \pi)}$, $\lambda_{t,(\check \mu, \check \pi)}^*$, and \(
			(-J_{t,(\check \mu, \check \pi)})^{\lambda_{t,(\check \mu, \check \pi)}^*}
			:
			E^0 \to \mathbb R
			\)
			are $\check{\cal F}_{t}^0$-measurable.
	\end{itemize}
\end{remark}

Leveraging the convex duality result in Lemma \ref{lem:lctransform}, and
the uniform-in-time bound result for~$(\check Q_t)_{t\geq 0}$ in Lemma \ref{lem:Q_est}, it is possible to show that $Z_{t+1,(\cdot,\cdot)}$ defined in \eqref{eq:asynchro_Q11} is bounded and has conditional zero mean with respect to $\check{\cal F}^0_{t}$ in \eqref{eq:filtration_mupi}.
\begin{lemma}\label{lem:mtg_z}
	Suppose that Assumptions \ref{as:MFC}\;(i),(ii) and \ref{as:Qlearning}\;(i) are satisfied. Then, for both the synchronous and asynchronous versions of the  $Q$-learning algorithm in Framework \ref{frame:Qlearning}, the following properties hold: for every $t\geq0$ and $(\check \mu,\check \pi)\in \check S\times\check \Pi$,  %
	\[
		\mathbb{E}^{\widehat{\mathbb{P}}^0}[Z_{t+1,(\check \mu,\check \pi)}|\check{\cal F}_{t}^0]=0\quad \mbox{$\widehat{\mathbb{P}}^0$-a.s.},\quad \mbox{and}\quad |Z_{t+1,(\check \mu,\check \pi)}| \leq 4 \check Q_{\operatorname{max}}.
	\]
	As a consequence, we also obtain that %
	\(
	{\rm \mathbb{V}ar}^{\widehat{\mathbb P}^0}[Z_{t+1,(\check \mu,\check \pi)}|\check{\cal F}_{t}^0]
	\le 16\check Q_{\operatorname{max}}^2\) $\widehat{\mathbb{P}}^0$-a.s..
\end{lemma}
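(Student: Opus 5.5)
The plan is to condition on $\check{\cal F}_t^0$ and use that the only fresh randomness in $Z_{t+1,(\check\mu,\check\pi)}$ is the sample $\varepsilon^0_{t+1,(\check\mu,\check\pi)}$. By Remark~\ref{rem:msr_indep}\;(ii), the quantities $\Phi_{t,(\check\mu,\check\pi)}$, $\lambda^*_{t,(\check\mu,\check\pi)}$ and the function $g_t:=(-J_{t,(\check\mu,\check\pi)})^{\lambda^*_{t,(\check\mu,\check\pi)}}:E^0\to\mathbb{R}$ are $\check{\cal F}_t^0$-measurable. The samples in Framework~\ref{frame:Qlearning} are i.i.d.\ with law $\widehat p_{\varepsilon^0}$, so $\varepsilon^0_{t+1,(\check\mu,\check\pi)}$ is independent of $\check{\cal F}_t^0$. (In the asynchronous case the learning rates are deterministic given the pre-sampled data, which lives on an independent space, so nothing changes.) Since $E^0$ is finite, a freezing/independence lemma then gives
\[
\mathbb{E}^{\widehat{\mathbb P}^0}\big[-g_t(\varepsilon^0_{t+1,(\check\mu,\check\pi)})\,\big|\,\check{\cal F}_t^0\big]=\int_{E^0}\big(-g_t(e^0)\big)\widehat p_{\varepsilon^0}(de^0).
\]
Concretely, one writes $g_t(e^0)=\sum_{x\in E^0}g_t(x)\mathbf 1_{\{e^0=x\}}$ and pulls out the $\check{\cal F}_t^0$-measurable coefficients.

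Subtracting $m^q\lambda^*_{t,(\check\mu,\check\pi)}$, the right-hand side becomes $\phi_{t,(\check\mu,\check\pi)}(\lambda^*_{t,(\check\mu,\check\pi)})$. This equals $\Phi_{t,(\check\mu,\check\pi)}$, because $\lambda^*_{t,(\check\mu,\check\pi)}$ is a maximizer, and such a maximizer exists by Lemma~\ref{lem:Q_est}\;(i). Hence the conditional mean of $Z_{t+1,(\check\mu,\check\pi)}$ vanishes. The duality Lemma~\ref{lem:lctransform} (Remark~\ref{rem:dual_Phi}\;(ii)) is only needed to identify $\Phi_{t,(\check\mu,\check\pi)}$ with the robust expectation in $\check{\cal H}$ in \eqref{eq:asynchro_Q11}; it is not needed for the martingale property itself. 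Integrability is not an issue, since everything is bounded, as shown next.

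For the bound, I use Lemma~\ref{lem:Q_est}\;(iii), which gives $\|\check Q_t\|\le\check Q_{\max}$ and hence $|J_{t,(\check\mu,\check\pi)}|\le\check Q_{\max}$. Taking $\tilde e^0=e^0$ in the $\lambda c$-transform gives $-g_t(e^0)\le J_{t,(\check\mu,\check\pi)}(e^0)\le\check Q_{\max}$. Moreover $-g_t(e^0)=\min_{\tilde e^0}\{J_{t,(\check\mu,\check\pi)}(\tilde e^0)+\lambda|e^0-\tilde e^0|^q\}\ge-\check Q_{\max}$, so $|g_t|\le\check Q_{\max}$. Lemma~\ref{lem:Q_est}\;(ii) gives $0\le m^q\lambda^*_{t,(\check\mu,\check\pi)}\le 2\check Q_{\max}$. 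Finally, $|\Phi_{t,(\check\mu,\check\pi)}|\le\check Q_{\max}$: the lower bound is \eqref{eq:lower_bound}, and the upper bound is $\phi_{t,(\check\mu,\check\pi)}\le\check Q_{\max}$. Combining these three bounds gives $|Z_{t+1,(\check\mu,\check\pi)}|\le \check Q_{\max}+2\check Q_{\max}+\check Q_{\max}=4\check Q_{\max}$.

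The variance bound follows directly: since the conditional mean is zero, the conditional variance equals $\mathbb{E}[Z_{t+1,(\check\mu,\check\pi)}^2\,|\,\check{\cal F}_t^0]\le16\check Q_{\max}^2$. The only delicate step is the conditional-expectation identity, which requires checking that $\lambda^*_{t,(\check\mu,\check\pi)}$ can be chosen $\check{\cal F}_t^0$-measurably. I take this from Remark~\ref{rem:msr_indep}; if needed, one can select the smallest maximizer, which is measurable because $\phi_{t,(\check\mu,\check\pi)}$ is continuous and depends measurably on the finitely many values of $\check Q_t$.
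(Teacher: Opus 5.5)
Your proposal is correct and follows essentially the same route as the paper: you treat $\lambda^*_{t,(\check\mu,\check\pi)}$ and $(-J_{t,(\check\mu,\check\pi)})^{\lambda^*_{t,(\check\mu,\check\pi)}}$ as fixed given $\check{\cal F}_t^0$, integrate the independent sample $\varepsilon^0_{t+1,(\check\mu,\check\pi)}$ against $\widehat p_{\varepsilon^0}$ to obtain $\phi_{t,(\check\mu,\check\pi)}(\lambda^*_{t,(\check\mu,\check\pi)})=\Phi_{t,(\check\mu,\check\pi)}$, and bound the three pieces of $Z_{t+1,(\check\mu,\check\pi)}$ by $\check Q_{\operatorname{max}}$, $2\check Q_{\operatorname{max}}$ and $\check Q_{\operatorname{max}}$. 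The only difference is cosmetic: the paper bounds $|\Phi_{t,(\check\mu,\check\pi)}|$ through the primal representation \eqref{eq:dual_Phi}, whereas you bound it on the dual side via \eqref{eq:lower_bound} and \eqref{eq:bound_lambda}, which, as you note, makes the duality lemma unnecessary here.
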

\begin{proof}
	The proof presented below applies to both the synchronous and asynchronous versions.

	Let $t\geq 0$ and $(\check \mu,\check \pi)\in \check S\times \check \Pi$. By the definition of $Z_{t+1,(\check \mu,\check \pi)}$ in \eqref{eq:asynchro_Q11} and the $\check{\cal F}_{t}^0$-measurability of $\Phi_{t,(\check \mu,\check \pi)}$ and $\lambda_{t,(\check \mu, \check \pi)}^*$ in \eqref{eq:defconjugateJ} (see Remark \ref{rem:msr_indep}\;(ii)), we have that $\widehat{\mathbb{P}}^0$-a.s.,
	\begin{align}\label{eq:worst1}
		\mathbb{E}^{\widehat{\mathbb{P}}^0}[Z_{t+1,(\check \mu,\check \pi)}|\check{\cal F}_{t}^0] + \Phi_{t,(\check \mu,\check \pi)}=\mathbb{E}^{\widehat{\mathbb{P}}^0}\Big[-(-J_{t,(\check \mu, \check \pi)})^{\lambda_{t,(\check \mu, \check \pi)}^*}(\varepsilon_{t+1,(\check \mu, \check \pi)}^0)\Big|\check{\cal F}_{t}^0\Big]-m^q \lambda_{t,(\check \mu, \check \pi)}^* =:\operatorname{I}.%
	\end{align}

	Moreover, since \(
	(-J_{t,(\check \mu, \check \pi)})^{\lambda_{t,(\check \mu, \check \pi)}^*}
	:
	E^0 \to \mathbb R
	\) is ${\cal F}_t^0$-measurable (see Remark \ref{rem:msr_indep}\;(ii)) and the family
	\((\varepsilon^0_{t,(\check\mu,\check\pi)})_{
			t\ge 1,
			(\check\mu,\check\pi)\in\check S\times\check\Pi
		}\subseteq E^0,
	\)
	is independent and identically distributed according to the law $\widehat{p}_{\varepsilon^0}$ (see Framework \ref{frame:Qlearning}), we have by Remark \ref{rem:dual_Phi}\;(ii) (see also Lemma \ref{lem:lctransform}) that $\widehat{\mathbb{P}}^0$-a.s.,
	\begin{align*}
		\operatorname{I}%
		 & =\int_{E^0}\Big(-(-J_{t,(\check \mu, \check \pi)})^{\lambda_{t,(\check \mu, \check \pi)}^*}(e^0)\Big)\widehat{p}_{\varepsilon^0}(de^0)-m^q \lambda_{t,(\check \mu, \check \pi)}^*=\phi_{t,(\check \mu,\check \pi)}(\lambda_{t,(\check \mu,\check \pi)}^*)=\Phi_{t,(\check\mu,\check \pi)}, %
	\end{align*}
	where the last two equalities follow from definition of $\lambda_{t,(\check \mu,\check \pi)}^*$ and \eqref{eq:defconjugateJ}.

	Combining this with \eqref{eq:worst1} yields that $\mathbb{E}^{\widehat{\mathbb{P}}^0}[Z_{t+1,(\check \mu,\check \pi)}|\check{\cal F}_{t}^0] =0$ $\widehat{\mathbb{P}}^0$-a.s..

	Next we show that $|Z_{t+1,(\check \mu,\check \pi)}| \leq 4 \check Q_{\operatorname{max}}$. Indeed, it follows from \eqref{eq:dual_Phi} in Remark \ref{rem:dual_Phi} and Lemma \ref{lem:Q_est} and \eqref{eq:async_step1_bound} (see the proof of Lemma \ref{lem:Q_est}) that
	\begin{align*}
		\begin{aligned}
			 & |Z_{t+1,(\check \mu,\check \pi)}|                                                                                                                                                                                                                                                                                                                                               \\
			 & \quad \leq \Big|(-J_{t,(\check \mu, \check \pi)})^{\lambda_{t,(\check \mu, \check \pi)}^*}(\varepsilon_{t+1,(\check \mu, \check \pi)}^0)\Big|+m ^q \lambda_{t,(\check \mu, \check \pi)}^*+\sup_{p\in \mathcal{B}_{m,q}^0(\widehat p^0)}\int_{E^0}\max_{\check \pi'\in \check\Pi}\Big|\check {Q}_t(\check{\operatorname{F}}(\check \mu,\check \pi, e^0),\check\pi')\Big|p(d e^0) \\
			 & \quad \leq 3\check Q_{\operatorname{max}} + \|\check Q_t\|_{\check S\times \check \Pi}\leq 4 \check Q_{\operatorname{max}}.
		\end{aligned}
	\end{align*}

	Last, we have ${\rm \mathbb{V}ar}^{\widehat{\mathbb P}^0}[Z_{t+1,(\check \mu,\check \pi)}|\check{\cal F}_{t}^0]= \mathbb{E}^{\widehat{\mathbb P}^0}[|Z_{t+1,(\check \mu,\check \pi)}|^2|\check{\cal F}_{t}^0]\leq 16 \check Q_{\operatorname{max}}^2$ $\widehat{\mathbb{P}}^0$-a.s..
\end{proof}

\subsection{Proof of Theorem \ref{thm:asynchro}}
The proof of Theorem \ref{thm:asynchro} is based on the convergence result for stochastic iterative algorithms in~\cite[Theorem~1]{6796861}, which we introduce in Lemma \ref{lem:jaakkola}. In particular, the uniform-in-time bound for $(\check Q_t)_{t\geq 0}$ established in Lemma~\ref{lem:Q_est}\,(iii), together with the properties of $(Z_{t+1,(\cdot,\cdot)})_{t\geq 0}$ established in Lemma \ref{lem:mtg_z}, are crucial for verifying that both the synchronous and asynchronous versions of our $Q$-learning algorithm in Framework \ref{frame:Qlearning} satisfy the hypotheses of~\cite[Theorem~1]{6796861}. Applying the convergence result, we obtain that for every $(\check \mu,\check \pi)\in \check S \times \check \Pi$
\[
	\lim_{t\to \infty}|\check Q_t(\check \mu,\check \pi)- \check Q^*(\check \mu,\check \pi)|=0\quad \mbox{$\widehat {\mathbb{P}}^0$-a.s.},
\]
where $\check Q^*$ is the discretized $Q$-function in \eqref{eq:projectedoptimalQ}.

The desired convergence result presented in Theorem \ref{thm:asynchro} then follows from the discretization error estimate for the optimal $Q$-function given in Lemma \ref{lem:proj_errrors}.
\begin{lemma}[Theorem~1 in Jaakkola et al.~\cite{6796861}]\label{lem:jaakkola}
	\label{lem:jaakkola1994_thm1}
	Let $X$ and $Y$ be some finite spaces. Consider a family of stochastic processes
	\(
	(\alpha_t(x,y), F_t(x,y), \Delta_t(x,y))_{t\geq 0,(x,y)\in X\times Y}
	\)
	on a probability measure space $(\widetilde \Omega,\widetilde{\mathcal{F}},\widetilde {\mathbb{P}})$,
	satisfying, for every $t\geq 0$ and $(x,y)\in X\times Y$
	\[
		\Delta_{t+1}(x,y)
		= \bigl(1-\alpha_t(x,y)\bigr)\Delta_t(x,y) + \alpha_t(x,y)F_t(x,y)
		\quad \widetilde{\mathbb{P}}\text{-a.s.}.
	\]
	Let $(\widetilde{\mathcal{F}}_t)_{t\geq 0}\subseteq\widetilde{\mathcal{F}}$ be a sequence of increasing
	$\sigma$-algebras such that $\Delta_0(x,y)$ and $\alpha_0(x,y)$ are $\widetilde{\mathcal{F}}_0$-measurable for all $(x,y)\in X\times Y$, whereas
	$\Delta_t(x,y)$, $\alpha_t(x,y)$, and $F_{t-1}(x,y)$ are $\widetilde{\mathcal{F}}_t$-measurable for all $t\geq 1$ and $(x,y)\in X\times Y$.
	Furthermore, assume that for every $t\ge 0$ and  $(x,y)\in X\times Y$,~$\widetilde{\mathbb{P}}$-a.s.,
	\begin{enumerate}
		\item[(i)] $0\leq  \alpha_t(x,y)\leq 1$,
			\(
			\sum_{t=0}^{\infty}\alpha_t(x,y)=\infty\),
			and
			\(
			\sum_{t=0}^{\infty}\alpha_t^2(x,y)<\infty\).
		\item[(ii)] There exists some constant $\delta\in(0,1)$ such that
			\(
			\|\mathbb{E}^{\widetilde{\mathbb{P}}}[F_t(\cdot,\cdot)|\widetilde{\mathcal{F}}_t]\|_{X\times Y}
			\le \delta\,\|\Delta_t\|_{X\times Y}\), where we denote by $\|f\|_{X\times Y}:= \max_{(x,y)\in X\times Y}|f(x,y)|$ for any mapping $f:X\times Y\to\mathbb{R}$.
		\item[(iii)] There exists some constant $C>0$ such that \(
			\|{\rm \mathbb{V}ar}^{\widetilde{\mathbb{P}}}[F_t(\cdot,\cdot)|\widetilde{\mathcal{F}}_t]\|_{X\times Y}
			\le C(1+\|\Delta_t\|_{X\times Y}^2),%
			\)
	\end{enumerate}
	Then $\lim_{t\to\infty}\Delta_t(x,y)=0$ $\widetilde{\mathbb{P}}$-a.s.~for any $(x,y)\in X\times Y$.
\end{lemma}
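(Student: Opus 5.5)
Since this is the classical result of Jaakkola et al., I would follow the standard stochastic-approximation argument and split each coordinate into a noise part and a contracting ``drift'' part. Write $\bar F_t(x,y):=\mathbb{E}^{\widetilde{\mathbb P}}[F_t(x,y)|\widetilde{\mathcal F}_t]$ and $\eta_t(x,y):=F_t(x,y)-\bar F_t(x,y)$. Then $\eta_t(x,y)$ is $\widetilde{\mathcal F}_{t+1}$-measurable, has zero conditional mean given $\widetilde{\mathcal F}_t$, and by (iii) has conditional variance at most $C(1+\|\Delta_t\|^2_{X\times Y})$.

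\emph{Step 1 (boundedness).} I would first show that $\sup_t\|\Delta_t\|_{X\times Y}<\infty$ a.s. In our application this step can be bypassed: $\Delta_t=\check Q_t-\check Q^*$ is uniformly bounded by Lemma~\ref{lem:Q_est}\,(iii) and Lemma~\ref{lem:liftedoptimalQ}. In general I would use the rescaling device of Jaakkola et al. Fix a threshold $G$. Whenever $\|\Delta_t\|$ exceeds $G$ times its current scale, divide the iterates by a factor so that they return below $G$. The conditions (ii) and (iii) are invariant under this normalization: the contraction bound is homogeneous, and the variance bound becomes $C(1+\|\cdot\|^2)/\text{scale}^2\le 2C$ on the relevant event. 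Using Step~2 with a bounded variance, one then shows that only finitely many rescalings occur a.s., which yields boundedness.

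\emph{Step 2 (noise vanishes).} On the event $\{\sup_t\|\Delta_t\|\le D\}$, or after localizing with stopping times, consider
\[
W_{t+1}(x,y)=(1-\alpha_t(x,y))W_t(x,y)+\alpha_t(x,y)\eta_t(x,y),\qquad W_0=0.
\]
Since $\eta_t$ is a martingale difference with bounded conditional variance and $\sum_t\alpha_t^2<\infty$, $\sum_t\alpha_t=\infty$, the Robbins--Siegmund theorem (or a direct computation of $\mathbb E[W_{t+1}^2|\widetilde{\mathcal F}_t]\le(1-\alpha_t)^2W_t^2+\alpha_t^2C'$) gives $W_t\to0$ a.s. This holds coordinatewise, and finiteness of $X\times Y$ makes it uniform.

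\emph{Step 3 (shrinking envelopes).} Suppose $\|\Delta_t\|\le D_k$ for all $t\ge t_k$. Set $Y_t:=\Delta_t-W_t$, which satisfies
\[
Y_{t+1}=(1-\alpha_t)Y_t+\alpha_t\bar F_t,\qquad |\bar F_t|\le\delta D_k .
\]
Comparing $|Y_t|$ with the scalar recursion $u_{t+1}=(1-\alpha_t)u_t+\alpha_t\delta D_k$, and using $\sum_t\alpha_t=\infty$, gives $\limsup_t|Y_t|\le\delta D_k$. Together with Step~2 this yields $\limsup_t\|\Delta_t\|\le\delta D_k$. Choose $\varepsilon=\frac{1-\delta}{2}D_k$ and $D_{k+1}:=\frac{1+\delta}{2}D_k$. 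Then there is a random $t_{k+1}$ with $\|\Delta_t\|\le D_{k+1}$ for all $t\ge t_{k+1}$. Iterating gives $D_k\to0$, hence $\Delta_t\to0$ a.s.

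I expect Step~1 to be the main obstacle, since without an a priori bound the variance condition allows growth. For the paper's purposes, I would invoke the uniform bound from Lemma~\ref{lem:Q_est} and present Steps~2--3 in full.
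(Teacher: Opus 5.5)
The paper does not prove this lemma; it imports it from Jaakkola et al.\ (their Theorem~1 with $\beta_n=\alpha_n$). Their proof has the same architecture as yours: split $F_t$ into its conditional mean plus a martingale-difference noise, let the contraction shrink an envelope, and obtain boundedness by a scaling device.

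Your Steps~2--3 are correct and give a clean pathwise argument once $\sup_t\|\Delta_t\|_{X\times Y}<\infty$ a.s.\ is known. Use the stopping-time version of Step~2: apply Robbins--Siegmund to $\eta_t\mathbf 1_{\{t<\tau_D\}}$ with $\tau_D:=\inf\{t:\|\Delta_t\|_{X\times Y}>D\}$. Conditioning on the non-adapted event $\{\sup_t\|\Delta_t\|_{X\times Y}\le D\}$ would destroy the martingale structure. For the paper's only use of the lemma, in the proof of Theorem~\ref{thm:asynchro}, this already suffices. There $\|\check\Delta_t\|_{\check S\times\check\Pi}\le 2\check Q_{\operatorname{max}}$ holds deterministically by Lemma~\ref{lem:Q_est}\,(iii) and Lemma~\ref{lem:liftedoptimalQ}, and $|Z_{t+1,(\cdot,\cdot)}|\le 4\check Q_{\operatorname{max}}$ by Lemma~\ref{lem:mtg_z}, so even localization is unnecessary.

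For the lemma as stated, however, there is a gap in Step~1. It carries the only non-routine idea, and you assert its conclusion rather than prove it: ``only finitely many rescalings occur a.s.'' is exactly what must be shown. Checking that (ii) and (iii) survive normalization is not enough. The normalized process $\Delta_t/G_t$ is not an instance of the lemma's recursion, since it is multiplied by $G_t/G_{t+1}$ at rescaling times, so Steps~2--3 cannot simply be applied to it.

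A complete argument runs as follows.
\begin{itemize}
\item Fix $\epsilon>0$ with $\delta(1+\epsilon)\le 1$ and let $G_0:=\max\{1,\|\Delta_0\|_{X\times Y}\}$. Put $G_{t+1}:=G_t$ if $\|\Delta_{t+1}\|_{X\times Y}\le(1+\epsilon)G_t$; otherwise put $G_{t+1}:=G_0(1+\epsilon)^k$ with $k$ minimal such that $\|\Delta_{t+1}\|_{X\times Y}\le G_{t+1}$.
\item Then $G_t$ is $\widetilde{\mathcal F}_t$-measurable and $\|\Delta_t\|_{X\times Y}\le(1+\epsilon)G_t$. Hence $\eta_t/G_t$ is a martingale difference with conditional variance at most $C(1+(1+\epsilon)^2)$.
\item Your Step~2 therefore gives $\tilde W_t\to0$ a.s., where $\tilde W_{t+1}:=(1-\alpha_t)\tilde W_t+\alpha_t\eta_t/G_t$ and $\tilde W_0:=0$.
\item On the event of infinitely many rescalings, pick a rescaling time $t_1$ after which $\|\tilde W_t\|_{X\times Y}\le\epsilon/2$, and restart the noise at $t_1$. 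As long as no rescaling occurs, the restarted noise equals $G_{t_1}\bigl(\tilde W_t-\prod_{s=t_1}^{t-1}(1-\alpha_s)\tilde W_{t_1}\bigr)$, so it is bounded by $\epsilon G_{t_1}$.
\item The drift part $\Delta_t$ minus this noise starts at $\|\Delta_{t_1}\|_{X\times Y}\le G_{t_1}$. It stays $\le G_{t_1}$ because its forcing is bounded by $\delta(1+\epsilon)G_{t_1}\le G_{t_1}$.
\item By induction, $\|\Delta_t\|_{X\times Y}\le(1+\epsilon)G_{t_1}$ for all $t\ge t_1$. So no further rescaling occurs, which is a contradiction.
\end{itemize}
Hence $G_t$, and with it $\|\Delta_t\|_{X\times Y}$, is bounded a.s., and your Steps~2--3 finish the proof.
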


Recalling the iterative $Q$-functions $(\check Q_t)_{t\geq 0}$ given in Framework \ref{frame:Qlearning}\;(iii) and the discretized optimal $Q$-function $\check Q^*$ given in \eqref{eq:projectedoptimalQ}, we define, for every $t\geq 0$ and $(\check \mu,\check \pi)\in \check S\times \check \Pi$,
\begin{align}\label{eq:gap}
	\check \Delta_t(\check \mu,\check \pi):= \check Q_{t}(\check \mu,\check \pi)-\check Q^*(\check \mu,\check \pi).
\end{align}

By Remark \ref{rem:fixed_checkQ} and \eqref{eq:asynchro_Q11}, we can rewrite the update rule \eqref{eq:asynchro_Q1} as follows: for every $t\geq 0$ and~$(\check \mu,\check \pi)\in \check S\times \check \Pi$,
\begin{align}\label{eq:gap_update}
	\begin{aligned}
		 & \check{\Delta}_{t+1}(\check \mu, \check \pi)=(1-\alpha_{t,(\check \mu, \check \pi)})\check{\Delta}_{t}(\check \mu, \check \pi)  +\alpha_{t,(\check \mu, \check \pi)}\check F_t(\check \mu,\check \pi), \\
		 & \quad \mbox{where $\check F_t( \check \mu,\check \pi):=                                                                                                                                                %
				\check {\cal H}\check Q_t( \check \mu,\check \pi)-\check {\cal H}\check Q^*( \check \mu,\check \pi) + \beta Z_{t+1,(\check \mu,\check \pi)}$.}
	\end{aligned}
\end{align}

\begin{proof}[Proof of Theorem \ref{thm:asynchro}]
	For both the synchronous and asynchronous versions of the $Q$-learning algorithm, we have by Lemma~\ref{lem:proj_errrors} that for any $t\geq 0$ and $(\mu,\pi)\in \overline{S}\times \Pi$,
	\begin{align}
		 & |\check Q_t(\operatorname{pj}_{\check S}(\mu),\operatorname{pj}_{\check \Pi}(\pi))- Q^*(\mu,\pi)|\nonumber                                                                                                                                                                                                \\
		 & \quad \leq|\check Q_t(\operatorname{pj}_{\check S}(\mu),\operatorname{pj}_{\check \Pi}(\pi))- \check Q^*(\operatorname{pj}_{\check S}(\mu),\operatorname{pj}_{\check \Pi}(\pi))|+| \check{Q}^*(\operatorname{pj}_{\check S}(\mu),\operatorname{pj}_{\check \Pi}(\pi))- Q^*(\mu,\pi)| \label{eq:prj_error} \\
		 & \quad \leq  \|\check \Delta_t\|_{ \check S\times \check \Pi}+C_1\varepsilon_{\check S}+{C_2}\varepsilon_{\check A}.\nonumber
	\end{align}

	Thus, it suffices to show that %
	$(\check{\Delta}_{t}(\check \mu, \check \pi),\alpha_{t,(\check \mu,\check \pi)}, \check F_t(\check \mu,\check \pi))_{t\geq 0,(\check \mu,\check \pi)\in \check S\times \check \Pi}$ in \eqref{eq:gap} and \eqref{eq:gap_update} satisfy all the conditions of Lemma \ref{lem:jaakkola}, so that
	for any $(\check \mu,\check \pi)\in \check S\times \check \Pi$, $\check{\Delta}_{t}(\check \mu, \check \pi)\to 0$ $\widehat{\mathbb{P}}^0$-a.s. as $t\to \infty$.

	We first consider the synchronous version of the $Q$-learning algorithm.

	\noindent{\it (Measurability) } %
	The synchronous learning rates $(\alpha_{t,(\check \mu,\check \pi)})_{t\geq 0,(\check \mu,\check \pi)\in \check S\times \check \Pi}$ in Framework \ref{frame:Qlearning}\;(i) are deterministic.
	Moreover, from Remark \ref{rem:msr_indep}, we have for every $(\check \mu,\check \pi)\in \check S\times \check \Pi$ that $\check \Delta_t( \check \mu,\check \pi)$ and $\check F_{t-1}( \check \mu,\check \pi)$ are $\check{\cal F}_{t}^0$-measurable for any $t\geq 1$, while $\check \Delta_0( \check \mu,\check \pi)$ is $\check{\cal F}_{0}^0$-measurable.

	\noindent {\it (Learning rates) } %
	Since $\alpha_{t,(\check \mu,\check \pi)}=(t+1)^{-w}$ for $t\geq 0$ with $w\in(\frac{1}{2},1)$, for any $(\check\mu,\check\pi)\in \check S\times \check \Pi$
	\[
		\sum_{t=0}^\infty \alpha_{t,(\check\mu,\check\pi)}
		=\sum_{t=0}^\infty (t+1)^{-w}=\infty\quad \mbox{and}\quad \sum_{t=0}^\infty \alpha_{t,(\check\mu,\check\pi)}^2
		=\sum_{t=0}^\infty (t+1)^{-2w}<\infty.
	\]

	\noindent {\it (Mean estimate of $\check F_t$) }  %
	We note that for any $t\geq 0$ and $(\check \mu,\check \pi)\in \check S\times \check \Pi$ %
	\begin{align}\label{eq:contraction_H}
		\begin{aligned}
			 & |\check {\cal H}\check Q_t( \check \mu,\check \pi)-\check {\cal H}\check Q^*( \check \mu,\check \pi)|                                                                                                                                                                                                                           \\
			 & \quad \leq \beta \sup_{p\in {\cal B}^0_{m,q}(\widehat{p}_{\varepsilon^0})} \int_{E^0}\bigg| \max_{\check \pi^1\in\check \Pi}\check Q_t(\check{\operatorname{F}}(\check\mu,\check\pi,e^0),\check \pi^1)- \max_{\check \pi^2\in\check\Pi}\check Q^*(\check{\operatorname{F}}(\check\mu,\check\pi,e^0),\check \pi^2)\bigg| p(de^0) \\
			 & \quad \leq \beta \sup_{p\in {\cal B}^0_{m,q}(\widehat{p}_{\varepsilon^0})} \int_{E^0}\max_{\check \pi^1\in\check\Pi}\Big| \check Q_t(\check{\operatorname{F}}(\check\mu,\check\pi,e^0),\check \pi^1)- \check Q^*(\check{\operatorname{F}}(\check\mu,\check\pi,e^0),\check \pi^1)\Big| p(de^0)                                   \\
			 & \quad \leq \beta \|\check \Delta_t\|_{\check S\times \check\Pi}.
		\end{aligned}
	\end{align}
	Combining this with Lemma \ref{lem:mtg_z} yields that for any $t\geq 0$ and $(\check \mu,\check \pi)\in \check S\times \check \Pi$, $\widehat{\mathbb{P}}^0$-a.s.,
	\begin{align}\label{eq:mean_est_F}
		\begin{aligned}
			\big|\mathbb{E}^{\widehat {\mathbb{P}}^0}[\check F_t (\check \mu,\check \pi)|\check{\cal F}_t^0]\big| & =\big|\mathbb{E}^{\widehat {\mathbb{P}}^0}[\check {\cal H}\check Q_t( \check \mu,\check \pi)-\check {\cal H}\check Q^*( \check \mu,\check \pi)|\check{\cal F}_t^0]\big|                                                                   \\
			                                                                                                      & \leq \mathbb{E}^{\widehat {\mathbb{P}}^0}\big[|\check {\cal H}\check Q_t( \check \mu,\check \pi)-\check {\cal H}\check Q^*( \check \mu,\check \pi)|\big|\check{\cal F}_t^0\big]\leq \beta \|\check \Delta_t\|_{\check S\times \check\Pi}.
		\end{aligned}
	\end{align}

	\noindent {\it (Variance estimate of $\check F_t$) } %
	Using \eqref{eq:contraction_H} and Lemma \ref{lem:mtg_z}, we have for any $t\geq 0$ and $(\check \mu,\check \pi)\in \check S\times \check \Pi$
	\begin{align*}
		\begin{aligned}
			\mathbb{E}^{\widehat {\mathbb{P}}^0}\big[|\check F_t (\check \mu,\check \pi)|^2\big|\check{\cal F}_t^0\big] & \leq 2 \big(\mathbb{E}^{\widehat {\mathbb{P}}^0}\big[|\check {\cal H}\check Q_t( \check \mu,\check \pi)-\check {\cal H}\check Q^*( \check \mu,\check \pi)|^2\big|\check{\cal F}_t^0\big] +\beta^2  \mathbb{E}^{\widehat {\mathbb{P}}^0}\big[|Z_{t+1,(\check \mu,\check \pi)}|^2\big|\check{\cal F}_t^0\big]  \big) \\
			                                                                                                            & \leq 2\beta^2 \big( \|\check \Delta_t\|_{\check S\times \check \Pi}^2+ 2^4 \check Q_{\operatorname{max}}  \big).
		\end{aligned}
	\end{align*}

	Combining this with \eqref{eq:mean_est_F} %
	yields that for any $t\geq 0$,  $\widehat{\mathbb P}^0$-a.s.,
	\begin{align}\label{eq:var_est_F}
		\begin{aligned}
			\|{\rm \mathbb{V}ar}^{\widehat{\mathbb P}^0}[\check F_t (\cdot,\cdot)|\check{\cal F}_t^0] \|_{\check S\times \check \Pi} & \leq \|\mathbb{E}^{\widehat {\mathbb{P}}^0}[\check F_t (\cdot,\cdot)|\check{\cal F}_t^0]\|_{\check S\times \check \Pi}^2+ \|\mathbb{E}^{\widehat {\mathbb{P}}^0}[|\check F_t (\cdot,\cdot)|^2|\check{\cal F}_t^0]\|_{\check S\times \check \Pi} \\
			                                                                                                                         & \leq \beta^2(\|\check \Delta_t\|_{\check S\times \check \Pi}+2^5\check Q_{\operatorname{max}}^2).
		\end{aligned}
	\end{align}

	Therefore, all the conditions in Lemma \ref{lem:jaakkola} are satisfied.

	For the case of the asynchronous version, the learning rate in~\eqref{eq:learningrate} is also deterministic, since the trajectory $(\check \mu_t,\check \pi_t)_{t\geq 0}\subseteq \check S\times \check\Pi$ used in its definition is pre-sampled. Moreover, under Assumption \ref{as:Qlearning}\;(ii), for any $(\check \mu,\check \pi)\in \check S\times \check \Pi$, the set $T_{(\check\mu,\check\pi)}$ in \eqref{eq:visiting_time} satisfies $|T_{(\check\mu,\check\pi)}|=\infty$. Thus, we may enumerate the times in $T_{(\check\mu,\check\pi)}$ as a strictly increasing sequence $(t_k)_{k\ge 0}$ such that
	\(
	\sum_{t=0}^\infty \alpha_{t,(\check\mu,\check\pi)}
	=\sum_{k=0}^\infty \alpha_{t_k,(\check\mu,\check\pi)}
	=\sum_{k=0}^\infty (k+1)^{-w}=\infty
	\)
	and $\sum_{t=0}^\infty \alpha_{t,(\check\mu,\check\pi)}^2
		=\sum_{k=0}^\infty (k+1)^{-2w}<\infty$.

	Moreover, the arguments establishing \eqref{eq:contraction_H}, \eqref{eq:mean_est_F}, and \eqref{eq:var_est_F} for the synchronous case apply directly to the asynchronous case. Consequently, all the conditions in Lemma~\ref{lem:jaakkola} are satisfied, and the result now follows directly from applying the convergence result in Lemma~\ref{lem:jaakkola}.

	This completes the proof.
\end{proof}

\subsection{Proof of Theorem \ref{thm:asynchro_rate}}\label{sec:proof:thm:asynchro_rate1}
We start by outlining the main ideas of the proof of Theorem~\ref{thm:asynchro_rate} for the asynchronous version of the $Q$-learning algorithm in Framework~\ref{frame:Qlearning}\,(i),(iii).

Conceptually, the analysis follows the approach in the literature on a finite-time iteration bound analysis for $Q$-learning (see, e.g., \cite{even2003learning}). In context of our $Q$-learning algorithm, the process $(\check \Delta_t)_{t\geq 0}$  defined in~\eqref{eq:gap} is controlled epoch-wise. Each epoch accounts for the covering time $T_{\operatorname{cov}}$ in Assumption \ref{as:Qlearning}\;(ii). The epochs are constructed so that $(\check \Delta_t)_{t\geq 0}$ decays inductively over epochs in the following way.

The process $(\check \Delta_t)_{t\geq 0}$ is decomposed into deterministic and stochastic components. The deterministic component vanishes inductively across epochs, whereas the stochastic component is represented as a sum of $\widehat{\mathbb{P}}^0$-martingale difference terms characterized by $(Z_{t+1,(\cdot,\cdot)})_{t\geq 0}$ defined in~\eqref{eq:asynchro_Q11}. Since $(Z_{t+1,(\cdot,\cdot)})_{t\geq 0}$ is bounded and has conditional zero mean; see Lemma~\ref{lem:mtg_z}, Azuma's inequality \cite{azuma1967weighted} (also introduced in Lemma \ref{lem:azuma}) yields concentration bounds for the stochastic term. Consequently, by choosing the number of iterations $T^*$ of the order of a sufficiently large epoch, we obtain that for every $T\geq T^*$,
\[
	\widehat{\mathbb{P}}^0\big(\|\check \Delta_{T}\|_{\check S\times \check \Pi}=\|\check Q_{T}-\check Q^*\|_{\check S\times \check \Pi} \leq \widehat \varepsilon\big)\geq 1-\widehat \delta,
\]
where $\widehat \varepsilon>0$ and $\widehat \delta\in (0,1)$ denotes the prescribed error and confidence levels, respectively.

The final ingredient in the proof is the discretization error bound (i.e., $C_1\varepsilon_{\check S}+{C_2}\varepsilon_{\check A}$) for the optimal $Q$-function established in Lemma~\ref{lem:proj_errrors}.

The notions and lemmas introduced below are tailored to the asynchronous version, but extend to the synchronous version under Framework~\ref{frame:Qlearning}(i),(ii) with only minor modifications. The synchronous case is briefly elaborated at the end of the proof.
\begin{definition}\label{dfn:epoch}%
	Let Assumption \ref{as:Qlearning}\;(ii) hold.
	Recall that $w\in (\frac{1}{2},1)$ is the exponent of the asynchronous learning rates and $(\check \mu_t, \check \pi_t)_{t\geq 0}\subseteq  \check S\times \check \Pi$ is the sampled trajectory in Framework~\ref{frame:Qlearning}\;(ii). Fix $\check \kappa \in(0,1)$ and let $\check c\geq 1$.
	\begin{itemize}
		\item [(i)] Set $\tau_0:=0$. For any $n\geq 1$ and any $\check t  \ge 1$, define
		      \[
			      \tau_{n+1;{\check t}}:=\tau_{n;{\check t}}+\bigg\lceil T_{\operatorname{cov}}\frac{\check c}{\check \kappa} \tau_{n;{\check t}}^w\bigg\rceil,\quad \mbox{with $\tau_{1;{\check t}}:= \check t$},
		      \]
		      where $T_{\operatorname{cov}}>1$ is the covering time given in Assumption~\ref{as:Qlearning}\;(ii), and $\check t$ represents the initial epoch's length. %
		\item [(ii)] For any $0\leq t_1<t_2$ and  \((\check \mu,\check \pi)\in \check S\times \check \Pi,
		      \) define
		      \begin{align*}%
			      T^{t_1,t_2}_{(\check \mu, \check \pi)}:= \{t\in [t_1,t_2): (\check \mu,\check \pi)=(\check \mu_t, \check \pi_t)\}\subseteq T_{(\check \mu, \check \pi)},%
		      \end{align*}
		      with the set $T_{(\check \mu, \check \pi)}$ defined in \eqref{eq:visiting_time}.
		\item [(iii)] For any $n\geq 1$, define \(D_{n+1}:=(1-\check \beta)^n D_1\), where $D_1:=2\check Q_{\operatorname{max}}$ and $\check \beta:=\frac{1-\beta}{2}$.
	\end{itemize}
\end{definition}

We first collect some elementary properties of the notions introduced in Definition~\ref{dfn:epoch}\;(i),(ii) and the learning rates in Framework \ref{frame:Qlearning}\;(ii).
\begin{lemma}\label{lem:time_check0}
	Let Assumption \ref{as:Qlearning}\;(ii) hold.  Then for any $\check t \geq 1$, the sequence $(\tau_{n;\check t})_{n\geq 0}$ given in Definition \ref{dfn:epoch}\;(i)~satisfy, for every $n\geq 0$,
	\begin{itemize}
		\item [(i)] $\frac{\tau_{n;\check t}}{T_{\operatorname{cov}}}\leq \lceil \frac{\tau_{n;\check t}}{T_{\operatorname{cov}}} \rceil+ \lfloor \frac{\check c}{\check \kappa}\tau_{n;\check t}^w  \rfloor \leq \frac{\tau_{n+1;\check t}}{T_{\operatorname{cov}}}+1.$
		\item [(ii)] $\tau_{n+1;\check t}^{1-w}\leq \check t^{1-w}+n(1-w)(T_{\operatorname{cov}}\frac{\check c}{\check \kappa}+\check t^{-w})$.
	\end{itemize}
	Moreover, %
	for every $n\geq 1$, $t\in[\tau_{n+1;\check t},\tau_{n+2;\check t})$, and $(\check \mu,\check \pi) \in \check S\times \check \Pi$,
	\begin{itemize}
		\item [(iii)] $|T^{\tau_{n;\check t},t}_{(\check \mu,\check \pi)}|\geq \lfloor \frac{\check c}{\check \kappa} \tau_{n;\check t}^w \rfloor$.
		\item [(iv)]   Let $i\in T^{ \tau_{n;\check t},t}_{(\check \mu, \check \pi)}$ be the smallest element in the set. Then $\alpha_{i,(\check \mu ,\check \pi )}\leq ( \lfloor \frac{ \tau_{n;\check t}}{T_{\operatorname{cov}}} \rfloor+1)^{-w}$.
	\end{itemize}
\end{lemma}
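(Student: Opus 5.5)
The four claims are elementary consequences of the recursion in Definition~\ref{dfn:epoch}\;(i) and the covering-time condition in Assumption~\ref{as:Qlearning}\;(ii). I would prove them in order, writing $\tau_n$ for $\tau_{n;\check t}$ and $T:=T_{\operatorname{cov}}$.

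For (i), the case $n=0$ is trivial since $\tau_0=0$. For $n\ge1$, the first inequality follows from $x\le\lceil x\rceil$ and the nonnegativity of the floor term. For the second, I would use $\lceil \tau_n/T\rceil\le \tau_n/T+1$ together with
\[
\Big\lfloor \tfrac{\check c}{\check\kappa}\tau_n^w\Big\rfloor\le \tfrac{\check c}{\check\kappa}\tau_n^w\le \tfrac1T\Big\lceil T\tfrac{\check c}{\check\kappa}\tau_n^w\Big\rceil=\tfrac{\tau_{n+1}-\tau_n}{T}.
\]
Adding the two bounds gives exactly $\tau_{n+1}/T+1$.

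For (ii), I would use concavity of $x\mapsto x^{1-w}$ on $(0,\infty)$. For each $k\ge1$,
\[
\tau_{k+1}^{1-w}-\tau_k^{1-w}\le (1-w)\tau_k^{-w}(\tau_{k+1}-\tau_k)\le (1-w)\tau_k^{-w}\Big(T\tfrac{\check c}{\check\kappa}\tau_k^w+1\Big)=(1-w)\Big(T\tfrac{\check c}{\check\kappa}+\tau_k^{-w}\Big).
\]
Since the sequence is nondecreasing, $\tau_k\ge\tau_1=\check t$, so $\tau_k^{-w}\le\check t^{-w}$. Telescoping over $k=1,\dots,n$ with $\tau_1=\check t$ yields the claim, and $n=0$ is immediate.

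For (iii), take $t\ge\tau_{n+1}$. Then $[\tau_n,t)\supseteq[\tau_n,\tau_{n+1})$, an integer interval of length at least $T\frac{\check c}{\check\kappa}\tau_n^w$. It therefore contains $\lfloor\frac{\check c}{\check\kappa}\tau_n^w\rfloor$ pairwise disjoint windows $\{s,\dots,s+T-1\}$. By Assumption~\ref{as:Qlearning}\;(ii), each window contains a visit to $(\check\mu,\check\pi)$, and distinct windows give distinct visit times, which proves the bound.

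For (iv), the index $i\in T_{(\check\mu,\check\pi)}$, so by \eqref{eq:learningrate} we have $\alpha_{i,(\check\mu,\check\pi)}=(N_{i,(\check\mu,\check\pi)}+1)^{-w}$. Since $i\ge\tau_n$, $N_{i,(\check\mu,\check\pi)}$ is at least the number of visits in $[0,\tau_n)$. That interval contains $\lfloor\tau_n/T\rfloor$ disjoint covering windows, hence at least $\lfloor\tau_n/T\rfloor$ visits by the same covering argument. Monotonicity of $x\mapsto(x+1)^{-w}$ then gives the claim.

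None of these steps is a real obstacle. The only points needing care are the integer bookkeeping in (i) and the lower bound $\tau_k\ge\check t$ used in (ii).
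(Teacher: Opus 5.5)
Your proof is correct and follows the same route as the paper. You use ceiling/floor bookkeeping with the recursion for (i), the concavity (tangent-line) bound for $x\mapsto x^{1-w}$ followed by telescoping for (ii), and the covering-time condition to count visits for (iii) and (iv). Your version is slightly more careful in two places: telescoping over $k=1,\dots,n$ avoids the $\tau_{0}^{-w}$ implicit in the paper's ``iterate over $n\ge 0$'', and your disjoint-window count gives the floor bound directly, whereas the paper's intermediate inequality $|T^{\tau_n,\tau_{n+1}}_{(\check\mu,\check\pi)}|\ge(\tau_{n+1}-\tau_n)/T_{\operatorname{cov}}$ in general needs a floor to be valid.
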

\begin{proof}
	For (i), the inequality $\frac{\tau_{n;\check t}}{T_{\operatorname{cov}}}\leq \lceil \frac{\tau_{n;\check t}}{T_{\operatorname{cov}}} \rceil+ \lfloor \frac{\check c}{\check \kappa}\tau_{n;\check t}^w  \rfloor$ is obvious, while the other inequality follows from the definition of $\tau_{n+1;\check t}$ in Definition \ref{dfn:epoch}\;(i). Indeed, for any $n\geq 0$
	\[
		\bigg\lceil \frac{\tau_{n;\check t}}{T_{\operatorname{cov}}} \bigg\rceil+ \bigg\lfloor \frac{\check c}{\check \kappa}\tau_{n;\check t}^w  \bigg\rfloor \leq \frac{\tau_{n;\check t}+T_{\operatorname{cov}}\lfloor \frac{\check c}{\check \kappa}\tau_{n;\check t}^w  \rfloor}{T_{\operatorname{cov}}}+1\leq \frac{\tau_{n;\check t}+\lceil T_{\operatorname{cov}}\frac{\check c}{\check \kappa} \tau_{n;\check t}^w\rceil}{T_{\operatorname{cov}}}+1= \frac{\tau_{n+1;\check t}}{T_{\operatorname{cov}}}+1.
	\]

	For (ii), since by the concavity of the map $\mathbb{R}_+\ni x\to x^{1-w}\in \mathbb{R}_+$, for any $n\geq 0$,
	\begin{align*}
		\begin{aligned}
			\tau_{n+1;\check t}^{1-w}-\tau_{n;\check t}^{1-w} & \leq \bigg(\tau_{n;\check t}+T_{\operatorname{cov}}\frac{\check c}{\check \kappa}\tau_{n;\check t}^w+1\bigg)^{1-w}-\tau_{n;\check t}^{1-w}                                             \\
			                                                  & \leq (1-w)\tau_n^{-w}\bigg(T_{\operatorname{cov}}\frac{\check c}{\check \kappa}\tau_n^w+1\bigg)\leq (1-w)\bigg(T_{\operatorname{cov}}\frac{\check c}{\check \kappa}+\tau_1^{-w}\bigg).
		\end{aligned}
	\end{align*}
	Hence, the desired estimate follows by iterating this inequality over $n\geq 0$.

	For (iii), by Assumption \ref{as:Qlearning}\;(ii), for every $n\geq 1$, $t\in[\tau_{n+1},\tau_{n+2})$, and $(\check \mu,\check \pi) \in \check S\times \check \Pi$,
	\[
		|T^{\tau_n,t}_{(\check \mu,\check \pi)}|\geq |T^{\tau_n,\tau_{n+1}}_{(\check \mu,\check \pi)}| \geq \frac{\tau_{n+1}-\tau_n}{T_{\operatorname{cov}}} \geq \bigg\lfloor \frac{\check c}{\check \kappa} \tau_n^w \bigg\rfloor.
	\]

	Last, for (iv), let $n\geq 1$, $t\in[\tau_{n+1},\tau_{n+2})$, and $(\check \mu,\check \pi) \in \check S\times \check \Pi$. Since $i\in T^{\tau_n,t}_{(\check \mu, \check \pi)}$ satisfies $i\geq \tau_n$, by Assumption \ref{as:Qlearning}\;(ii), it holds that $N_{i,(\check \mu,\check \pi)}\geq \lfloor \frac{\tau_n}{T_{\operatorname{cov}}}\rfloor$. Hence, by definition of the asynchronous learning rate in \eqref{eq:learningrate} the desired inequality holds.
\end{proof}

The following lemma establishes a relation between the epoch sequences corresponding to the initial epoch length and its increase.
\begin{lemma}\label{lem:perturb_init}
	Let Assumption \ref{as:Qlearning}\;(ii) hold. Let $\check n\geq 1$ be given. If the initial epoch length $\check t $ satisfies
	\[
		\check t\ge
		\max\bigg\{
		\bigg(\frac{2T_{\rm cov}\frac{\check c}{\check\kappa}w(\check  n-1)}{\log 2}\bigg)^{\frac1{1-w}},
		\bigg(\frac{2\check n+1}{2T_{\rm cov}\frac{\check c}{\check\kappa}}\bigg)^{\frac1w},1
		\bigg\},
	\]
	then it holds that $\tau_{\check n+2;\check t}-1\geq \tau_{\check n;\check t +1}$.
\end{lemma}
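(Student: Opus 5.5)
The plan is a direct comparison argument: I will show that the epoch sequence started from $\check t+1$ is dominated by the sequence started from $\check t$ shifted by one index. Write $K:=T_{\operatorname{cov}}\frac{\check c}{\check\kappa}$ and define $f:[1,\infty)\to[1,\infty)$ by $f(x):=x+\lceil K x^w\rceil$. By Definition~\ref{dfn:epoch}\;(i), $\tau_{k+1;s}=f(\tau_{k;s})$ for all $k\ge1$ and every initial length $s\ge1$. The map $f$ is nondecreasing, since $x\mapsto x$ and $x\mapsto\lceil Kx^w\rceil$ are both nondecreasing. Moreover, each sequence $(\tau_{k;s})_{k\ge1}$ is increasing and $\ge1$.

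First I handle the base case $b_1\le a_2$, where $a_k:=\tau_{k;\check t}$ and $b_k:=\tau_{k;\check t+1}$. We have $b_1=\check t+1$ and $a_2=\check t+\lceil K\check t^w\rceil$. The second term in the assumed lower bound on $\check t$ gives $K\check t^w\ge \frac{2\check n+1}{2}\ge\frac32\ge1$. Hence $\lceil K\check t^w\rceil\ge1$, i.e.\ $b_1\le a_2$.

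Next I propagate this by induction using monotonicity of $f$. If $b_k\le a_{k+1}$, then
\[
b_{k+1}=f(b_k)\le f(a_{k+1})=a_{k+2}.
\]
So $b_k\le a_{k+1}$ for all $k\ge1$, and in particular $\tau_{\check n;\check t+1}=b_{\check n}\le a_{\check n+1}$.

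Finally, since $a_{\check n+1}\ge1$ and $K>0$, we get $a_{\check n+2}=a_{\check n+1}+\lceil K a_{\check n+1}^w\rceil\ge a_{\check n+1}+1$. Combining,
\[
\tau_{\check n+2;\check t}-1\ge a_{\check n+1}\ge\tau_{\check n;\check t+1},
\]
which is the claim.

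I expect no real obstacle. The only point to verify carefully is the base case, i.e.\ that the ceiling term is at least one, and this is guaranteed by the second condition on $\check t$ (together with $\check t\ge1$). The first condition, involving $\log 2$, appears not to be needed for this argument. Presumably it is used elsewhere to control ratios such as $\tau_{\check n;\check t+1}/\tau_{\check n;\check t}$, but under this plan it plays no role.
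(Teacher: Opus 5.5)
Your argument is correct, and it takes a genuinely different route from the paper.

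\textbf{The paper's route.} The paper proves the lemma by a quantitative perturbation estimate. It sets $d_n:=\tau_{n;\check t+1}-\tau_{n;\check t}$ and uses concavity of $x\mapsto x^w$ to get the linear recursive bound $d_{n+1}\le d_n(1+\check C w\check t^{\,w-1})+1$, with $\check C=T_{\rm cov}\check c/\check\kappa$. It then spends the first condition (the one with $\log 2$) to obtain $d_{\check n}\le 2\check n$. It spends the second condition to obtain $\tau_{\check n+2;\check t}-1-\tau_{\check n;\check t}\ge 2\check C\check t^{w}-1\ge 2\check n$, and combines the two. So, contrary to your guess, the first condition is not used elsewhere: it is consumed exactly in controlling how far the two epoch sequences drift apart.

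\textbf{Your route.} You instead use that the recursion map $f(x)=x+\lceil Kx^w\rceil$ is order-preserving, together with the one-step shift at the start. This yields the stronger statement $\tau_{n;\check t+1}\le\tau_{n+1;\check t}$ for every $n\ge1$, with both an index and the $-1$ to spare. In fact you need neither hypothesis on $\check t$. The base case $\lceil K\check t^{w}\rceil\ge1$ holds simply because $K\check t^w>0$, so your appeal to the second condition is superfluous.

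\textbf{What each buys.} Your approach is shorter and shows that the threshold $\check t_2^*$ in \eqref{eq:perturb_init} can be dropped from the definition of $\check t^*$ in the proof of Theorem~\ref{thm:asynchro_rate}. This does not change the stated order of $T^*$, since the term $[T_{\rm cov}\log(\check Q_{\operatorname{max}}/\widehat\varepsilon)]^{1/(1-w)}$ arises anyway from Lemma~\ref{lem:time_check0}\;(ii). The paper's approach yields the explicit drift bound $d_n\le 2n$, but that extra information is not used anywhere else.
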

\begin{proof}
	For notational simplicity, set \(\check C:=T_{\rm cov}\check c/\check\kappa\), and write, for every $n\ge 1$,
	\(
	d_n:=\tau_{n;\check t+1}-\tau_{n;\check t}
	\)
	Then, it holds that for every $n\geq 1$
	\begin{align}\label{eq:gap_n}
		\begin{aligned}
			d_{n+1}= \tau_{n;\check t+1}+\lceil\check C \tau_{n;\check t+1}^w \rceil - ( \tau_{n;\check t}+\lceil\check C \tau_{n;\check t}^w \rceil)
			 & \le
			d_n
			+
			\check C(
			\tau_{n;\check t+1}^{w}
			-
			\tau_{n;\check t}^{w}
			) +1    \\
			 & \leq
			d_n(1+\check C w \tau_{n;\check t}^{w-1})+1,
		\end{aligned}
	\end{align}
	where the last inequality follows from the concavity of the map $\mathbb{R}_+\ni x\mapsto x^w\in \mathbb{R}_+$.

	Since $d_1=1$, iterating the estimate in \eqref{eq:gap_n} gives, for every \(1\le n\le \check n\),
	\begin{align}\label{eq:gap_n1}
		d_n
		\le
		n\left(1+\check Cw\check t^{w-1}\right)^{n-1}
		\le
		n\exp\left(\check Cw(n-1)\check t^{w-1}\right)
		\le
		2n,
	\end{align}
	where the last inequality follows from the condition on \(\check t\), namely,
	\(
	\check t\ge (\frac{2 \check Cw(\check n-1)}{\log 2})^{\frac{1}{1-w}}.
	\)

	On the other hand,
	\begin{align}\label{eq:gap_n2}
		\begin{aligned}
			\tau_{\check n+2;\check t}-1-\tau_{\check n;\check t}
			 & =
			\lceil \check C\tau_{\check n;\check t}^w\rceil
			+
			\lceil \check C\tau_{\check n+1;\check t}^w\rceil
			-1     \\
			 & \ge
			2\check C\check t^w-1\geq 2\check n,
		\end{aligned}
	\end{align}
	where the last inequality follows from the condition on $\check t$, namely, $\check t\geq (\frac{2\check n+1}{2\check C})^{\frac{1}{w}}$.

	Combining \eqref{eq:gap_n1} and \eqref{eq:gap_n2} leads to
	\[
		\tau_{\check n+2;\check t}-1
		\ge
		\tau_{\check n;\check t}+2\check n \geq \tau_{\check n;\check t}+d_{\check n}
		=
		\tau_{\tilde n;\check t+1}.
	\]
	This proves the claim.
\end{proof}

In what follows, we often make use of the following elementary inequalities.
\begin{lemma}%
	\label{lem:elem_est}
	The following hold:
	\begin{itemize}
		\item [(i)] For any $\tilde w\in(0,1)$ and $t_1,t_2\in \mathbb{N}$ with $t_2>t_1$,
		      \(
		      \prod_{i=t_1}^{t_2}(1-{(i+1)^{-\tilde w}})\leq \exp(-\frac{t_2-t_1}{(t_2+1)^{\tilde w}}).
		      \)
		\item [(ii)] For any $a>0$ satisfying $2a\log a>1$, and any $A>2a\log a$,
		      \(
		      A \exp({-\frac{2A}{a}})\leq \exp({-\frac{A}{a}}).
		      \)
	\end{itemize}
\end{lemma}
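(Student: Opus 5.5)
For (i), I would take logarithms and use the elementary bound $\log(1-x)\le -x$ for $x\in[0,1)$. Since $(i+1)^{-\tilde w}\le 1$ for $i\ge 1$, with equality impossible for $i\geq 1$, every factor lies in $[0,1)$ and the logarithm is defined; if a factor were zero the bound would hold trivially. This gives
\[
\log\prod_{i=t_1}^{t_2}\big(1-(i+1)^{-\tilde w}\big)\le -\sum_{i=t_1}^{t_2}(i+1)^{-\tilde w}.
\]
The map $i\mapsto (i+1)^{-\tilde w}$ is decreasing, so each summand is at least $(t_2+1)^{-\tilde w}$. There are $t_2-t_1+1\ge t_2-t_1$ summands, hence the sum is bounded below by $(t_2-t_1)(t_2+1)^{-\tilde w}$. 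Exponentiating yields the claimed estimate. This part is routine.

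For (ii), I would show that the desired inequality is equivalent to $A\exp(-A/a)\le 1$, i.e.\ to $g(A):=\log A-A/a\le 0$. The function $g$ satisfies $g'(A)=1/A-1/a$, so it is decreasing for $A\ge a$. The first step is to check that $2a\log a\ge a$, so that the whole range $A>2a\log a$ lies in the decreasing region. This holds when $\log a\ge 1/2$, i.e.\ $a\ge\sqrt e$. The hypothesis $2a\log a>1$ by itself only forces $a$ somewhat larger than $1$, so this gap must be handled separately.

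This is the main obstacle, and I would try to bypass it by proving $g\le 0$ for all $A>0$ directly once $a$ is large enough. The maximum of $g$ on $(0,\infty)$ is $g(a)=\log a-1$, which is nonpositive precisely when $a\le e$. So for $a\le e$ the inequality holds for every $A>0$, with no need to use $A>2a\log a$. For $a>e$ we have $2a\log a>2a>a$, so $g$ is decreasing on $[2a\log a,\infty)$. It then suffices to check the endpoint:
\[
g(2a\log a)=\log 2+\log a+\log\log a-2\log a=\log(2\log a)-\log a.
\]
This is nonpositive because $2\log a\le a$ for all $a>0$, which follows since $a-2\log a$ attains its minimum $2-2\log 2>0$ at $a=2$. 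Combining the two cases $a\le e$ and $a>e$ proves (ii) under the stated hypotheses; in fact the condition $2a\log a>1$ is not needed for this argument.
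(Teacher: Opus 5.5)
Your proposal is correct and follows essentially the same route as the paper: (i) uses $1-x\le e^{-x}$ and bounds each term below by $(t_2+1)^{-\tilde w}$, and (ii) studies the sign of $\log A - A/a$ through its derivative, closing the endpoint case with $2\log a\le a$, obtained by minimizing $x-2\log x$ at $x=2$. The only difference is cosmetic: you split at $a=e$ using the global maximum $\log a-1$, whereas the paper splits according to whether the minimizer over the interval is $a$ or $2a\log a$ (i.e.\ at $a=\sqrt e$), and your remark that the hypothesis $2a\log a>1$ is not actually needed is accurate.
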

\begin{proof} Since $1-x\leq e^x$ for all $x\in[0,1]$, we obtain (i), namely,
	\[
		\prod_{i=t_1}^{t_2}(1-{(i+1)^{-\tilde w}})\leq \exp\bigg(-\sum_{i=t_1}^{t_2}(i+1)^{-\tilde w}\bigg)\leq \exp\bigg(-\frac{t_2-t_1}{(t_2+1)^{\tilde w}}\bigg),
	\]
	where the last inequality holds because $(i+1)^{-\tilde w}\geq (t_2+1)^{\tilde w}$ for all $i=t_1,\dots,t_2$.

	For (ii),
	let $a>0$ satisfy $2a\log a>1$. %
	For each $A\in(2a\log a,+\infty)$, define \(\phi(A):=\frac{A}{a} - \log A\), and show that $\phi\geq 0$ on the interval.

	Since
	\(
	\phi'(A)
	=
	\frac{1}{a} - \frac{1}{A}
	=
	\frac{A-a}{aA},
	\)
	$\phi$ attains the minimum at \(
	A^* = \max\{a, 2a\log a\}.
	\)
	Thus it suffices to verify that $\phi(A^*)\geq 0$.

	Suppose that $A^* = a$. Then, since
	\(
	\phi(a)= 1 - \log a
	\)
	and $2a\log a \le a$, we have
	\(
	\phi(A^*) \ge \tfrac12 > 0.
	\)
	Otherwise (i.e., $A^*=2a \log a$), since
	\(
	\phi(A^*)
	=
	\log(\frac{a}{2\log a}),
	\)
	it suffices to show $a \ge 2\log a$.

	Define $\psi(x):=x-2\log x$ for $x>0$.  Then \(
	\psi'(x) = 1 - \frac{2}{x} = \frac{x-2}{x},
	\)
	so
	$\psi$ attains the minimum  at $x=2$.
	Since
	\(
	\psi(2)=2-2\log 2>0,
	\)
	we conclude $\psi(x)>0$ for all $x>0$, and thus
	$a>2\log a$.

	Therefore $\phi(A) \ge 0$ for all $A>2a\log a$,
	which implies
	\(
	A \exp({-\frac{2A}{a}})\leq \exp({-\frac{A}{a}}).
	\)
\end{proof}

\begin{lemma}\label{lem:separate_est}
	Suppose that Assumptions \ref{as:MFC}\;(i),(ii) and \ref{as:Qlearning}\;(i) are satisfied.
	Let $(Z_{t+1,(\cdot,\cdot)})_{t\geq 0}$ and $(\check \Delta_t)_{t\geq 0}$ be~given in \eqref{eq:asynchro_Q11} and \eqref{eq:gap}, respectively. Let $\check t\geq 1$ be given. Let $(\tau_{n;\check t})_{n\geq 0}$ and~$(D_n)_{n\geq 1}$ be given in Definition~\ref{dfn:epoch}. %
	Moreover, for any $n\geq 1$, let $(W_{t;n},Y_{t;n})_{t\geq \tau_{n;\check t}}$ be sequences of mappings defined for every $t\geq \tau_{n;\check t}$ and $(\check \mu,\check \pi)\in \check S\times \check \Pi$ by
	\begin{align}\label{eq:YW_seq}
		\begin{aligned}
			 & W_{t+1;n}(\check \mu,\check \pi):= (1-\alpha_{t,(\check \mu,\check \pi)})W_{t;n}(\check \mu,\check \pi) +\alpha_{t,(\check \mu,\check \pi)} \beta Z_{t+1,(\check \mu,\check \pi)}, \\
			 & Y_{t+1;n}(\check \mu,\check \pi):= (1-\alpha_{t,(\check \mu,\check \pi)}) Y_{t;n}(\check \mu,\check \pi) + \alpha_{t,(\check \mu,\check \pi)}\beta D_n,                            %
		\end{aligned}
	\end{align}
	with $W_{ \tau_{n;\check t} ;n}(\check \mu,\check \pi):=0$ and $Y_{ \tau_{n;\check t} ;n}(\check \mu,\check \pi):=D_n$. If $\|\check\Delta_t\|_{\check S\times \check \Pi} \leq D_n$ holds for all $t\in[\tau_{n;\check t},\tau_{n+2;\check t})$, then we have  for every $t\geq \tau_{n;\check t}$ and~$(\check \mu,\check\pi)\in \check S\times \check\Pi$ that
	\begin{align}\label{eq:separate_est}
		-Y_{t;n}(\check \mu,\check \pi)+W_{t;n}(\check \mu,\check \pi)\leq \check \Delta_t(\check \mu,\check \pi)\leq Y_{t;n}(\check \mu,\check \pi)+W_{t;n}(\check \mu,\check \pi).
	\end{align}
\end{lemma}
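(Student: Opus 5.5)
The plan is to argue by induction on $t\ge \tau_{n;\check t}$ with $n$ fixed, comparing the recursion for $\check\Delta_t$ in \eqref{eq:gap_update} with the recursions \eqref{eq:YW_seq}. Writing $\alpha_t:=\alpha_{t,(\check\mu,\check\pi)}$, the gap satisfies
\[
\check\Delta_{t+1}(\check\mu,\check\pi)=(1-\alpha_t)\check\Delta_t(\check\mu,\check\pi)+\alpha_t\big(\check{\cal H}\check Q_t(\check\mu,\check\pi)-\check{\cal H}\check Q^*(\check\mu,\check\pi)\big)+\alpha_t\beta Z_{t+1,(\check\mu,\check\pi)}.
\]
The noise part $\alpha_t\beta Z_{t+1}$ is carried exactly by $W$. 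The drift part is controlled by the contraction estimate \eqref{eq:contraction_H}, namely $|\check{\cal H}\check Q_t-\check{\cal H}\check Q^*|\le\beta\|\check\Delta_t\|_{\check S\times\check\Pi}$, and this is absorbed by the $\alpha_t\beta D_n$ term in $Y$.

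For the base case $t=\tau_{n;\check t}$, we have $W=0$ and $Y=D_n$. The hypothesis gives $|\check\Delta_{\tau_{n;\check t}}|\le D_n$, so \eqref{eq:separate_est} holds.

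For the induction step, assume \eqref{eq:separate_est} holds at some $t$ with $t\in[\tau_{n;\check t},\tau_{n+2;\check t})$. Since $\alpha_t\in[0,1]$, the factor $1-\alpha_t\ge0$ preserves the inequalities. Multiplying by it and adding the drift bound $\pm\alpha_t\beta\|\check\Delta_t\|\le\pm\alpha_t\beta D_n$ together with the common term $\alpha_t\beta Z_{t+1}$ gives
\[
\check\Delta_{t+1}\le(1-\alpha_t)(Y_{t;n}+W_{t;n})+\alpha_t\beta D_n+\alpha_t\beta Z_{t+1}=Y_{t+1;n}+W_{t+1;n}.
\]
The lower bound follows symmetrically. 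This step uses only Lemma~\ref{lem:Q_est} for well-definedness and \eqref{eq:contraction_H}. It holds identically for the synchronous and asynchronous learning rates; in the asynchronous case, unvisited pairs have $\alpha_t=0$ and all three sequences simply stay put.

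The main obstacle is the range of $t$. The hypothesis $\|\check\Delta_t\|\le D_n$ is only available for $t<\tau_{n+2;\check t}$, so the induction as written covers only $t\le\tau_{n+2;\check t}$. The statement, however, claims \eqref{eq:separate_est} for every $t\ge\tau_{n;\check t}$. I would first check whether the intended claim is restricted to $[\tau_{n;\check t},\tau_{n+2;\check t}]$, which is all the later epoch argument needs. If the full range is genuinely meant, I would try to extend beyond $\tau_{n+2;\check t}$ by showing that the sandwich itself keeps $|\check\Delta_t|$ below $D_n$. One would argue inductively that $Y_{t;n}\le D_n$, which holds because $Y$ is a convex combination of $D_n$ and $\beta D_n$. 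The difficulty is that $|W|$ is not a priori small, so this self-propagation fails without extra assumptions, and only the restricted range is robustly provable from the stated hypothesis.
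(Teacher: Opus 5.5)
Your induction is essentially the paper's argument: the base case follows from $W_{\tau_{n;\check t};n}=0$ and $Y_{\tau_{n;\check t};n}=D_n$, and in the step the drift $\check{\mathcal H}\check Q_t-\check{\mathcal H}\check Q^*$ is bounded via \eqref{eq:contraction_H} by $\beta\|\check\Delta_t\|_{\check S\times\check\Pi}\le\beta D_n$ and absorbed into $Y_{t+1;n}$, while the noise is carried exactly by $W_{t+1;n}$. Your caveat about the range is also correct and applies equally to the paper's proof: it invokes $\|\check\Delta_t\|_{\check S\times\check\Pi}\le D_n$ at arbitrary $t\ge\tau_{n;\check t}$, although this is only assumed for $t<\tau_{n+2;\check t}$, so only $t\le\tau_{n+2;\check t}$ is actually established; the later use on $[\tau_{n+2;\check t},\tau_{n+3;\check t})$ in Step~1 of the proof of Theorem~\ref{thm:asynchro_rate} is repaired precisely by your self-propagation idea, run on the event $\operatorname{J}_n(\check t)$ where $|W_{t;n}|$ is small, since there Lemma~\ref{lem:Y_bound} gives $|\check\Delta_t|\le Y_{t;n}+|W_{t;n}|\le D_{n+1}\le D_n$ and the induction continues.
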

\begin{proof}
	The proof uses an induction over $t\geq \tau_{n;\check t}$. The estimate \eqref{eq:separate_est} holds at $t=\tau_{n;\check t}$ because $W_{ \tau_{n;\check t} ;n}(\check \mu,\check \pi)=0$, $Y_{ \tau_{n;\check t} ;n}(\check \mu,\check \pi)=D_n$ and $\|\check\Delta_{\tau_{n;\check t}}\|_{\check S\times \check \Pi} \leq D_n$.

	Assume that the estimate holds for some $t\geq \tau_{n;\check t}$.  Let $(\check \mu,\check\pi)\in \check S\times \check\Pi$ be given. %

	Since $\check {\cal H}\check Q^*=\check {\cal Q}^*$ (see Remark \ref{rem:fixed_checkQ}), by the assumption that $\|\check\Delta_t\|_{\check S\times \check \Pi} \leq D_n$, the upper bound in~\eqref{eq:separate_est}, and the estimate in \eqref{eq:contraction_H}, we have %
	\begin{align*}
		\check \Delta_{t+1}(\check \mu,\check \pi)%
		 & =(1-\alpha_{t,(\check\mu,\check\pi)})\check \Delta_t(\check \mu,\check \pi)+\alpha_{t,(\check\mu,\check\pi)}\Big(\check {\cal H}\check Q_t( \check \mu,\check \pi)-\check {\cal H}\check Q^*( \check \mu,\check \pi) + \beta Z_{t+1,(\check \mu,\check \pi)}\Big) \\
		 & \leq                                                                                                                                                                                                                                                              %
		(1-\alpha_{t,(\check\mu,\check\pi)})Y_{t;n}(\check \mu,\check \pi) + W_{t+1;n}(\check \mu,\check \pi) + \alpha_{t,(\check\mu,\check\pi)}(\check {\cal H}\check Q_t( \check \mu,\check \pi)-\check {\cal H}\check Q^*( \check \mu,\check \pi) )                       \\
		 & \leq (1-\alpha_{t,(\check\mu,\check\pi)})Y_{t;n}(\check \mu,\check \pi)  + W_{t+1;n}(\check \mu,\check \pi) + \alpha_{t,(\check\mu,\check\pi)} \beta \|\check\Delta_t\|_{\check S\times \check \Pi}                                                               \\
		 & \leq                                                                                                                                                                                                                                                              %
		Y_{t+1;n}(\check \mu,\check \pi)+W_{t+1;n}(\check \mu,\check \pi).
	\end{align*}

	Using similar arguments and the lower bound in \eqref{eq:separate_est}, we also have %
	\begin{align*}
		\begin{aligned}
			\check \Delta_{t+1}(\check \mu,\check \pi) & \geq                                                                                                                                                                                              %
			W_{t+1;n}(\check \mu,\check \pi)- (1-\alpha_{t,(\check\mu,\check\pi)}) Y_{t;n}(\check \mu,\check \pi) + \alpha_{t,(\check\mu,\check\pi)}(\check {\cal H}\check Q_t( \check \mu,\check \pi)-\check {\cal H}\check Q^*( \check \mu,\check \pi) ) \\
			                                           & \geq W_{t+1;n}(\check \mu,\check \pi)- (1-\alpha_{t,(\check\mu,\check\pi)}) Y_{t;n}(\check \mu,\check \pi)  -\alpha_{t,(\check\mu,\check\pi)}\beta\|\check\Delta_t\|_{\check S\times \check \Pi}  \\
			                                           & \geq                                                                                                                                                                                              %
			W_{t+1;n}(\check \mu,\check \pi)-Y_{t+1;n}(\check \mu,\check \pi).
		\end{aligned}
	\end{align*}
	Therefore, by the induction hypothesis, the desired bound holds at time $t+1$.
\end{proof}

\begin{lemma}\label{lem:Y_bound}
	Suppose that Assumptions \ref{as:MFC}\;(i),(ii) and \ref{as:Qlearning} are satisfied. Let $\check\delta \in (0,e-2)$ be~given. Moreover, assume~that the initial epoch's length $\check t\geq 1$ and $\check c\geq 1$ in Definition~\ref{dfn:epoch} satisfy
	\[
		\check t\ge \max\{\check  t_{1}^*,\lceil{T_{\operatorname{cov}}}({T_{\operatorname{cov}}-1})^{-1}\rceil\}\quad\mbox{and}\quad \check c\geq \max\{\check c_{1}^*,\,1\},
	\]
	where
	\begin{align}\label{eq:tau1_condi1}
		\check t_{1}^*:=T_{\operatorname{cov}}\big\lceil {2}^{\frac{1}{w}}(1-\log(2+\check \delta))^{-\frac{1}{w}}\big\rceil,\quad \check c_{1}^*:=\frac{\log(2+\check\delta)+2({T_{\operatorname{cov}}}/{\check t_1^*})^{w}}{1-(\log(2+\check\delta)+2({T_{\operatorname{cov}}}/{\check t_1^*})^{w})}.
	\end{align}
	Furthermore, for any $n\geq 1$, let $(Y_{t,n})_{t\geq \tau_{n;\check t}}$ be given  in \eqref{eq:YW_seq}.
	If $\|\check\Delta_t\|_{\check S\times \check \Pi} \leq D_n$ holds for all $t\in\{\tau_{n;\check t},\dots,\tau_{n+2;\check t}-1\}$, then for every $t\geq \tau_{n+1;\check t}$ %
	\begin{align}\label{eq:Y_bound}
		\|Y_{t;n}\|_{\check S\times \check \Pi}\leq (\beta+{2}\check \beta(2+\check \delta)^{-1})D_n.
	\end{align}
\end{lemma}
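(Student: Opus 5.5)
The plan is to solve the deterministic recursion for $Y_{t;n}$ in closed form and then reduce \eqref{eq:Y_bound} to a lower bound on the sum of learning rates accumulated over one epoch. Subtracting $\beta D_n$ from both sides of the second recursion in \eqref{eq:YW_seq} gives
\[
Y_{t+1;n}(\check\mu,\check\pi)-\beta D_n=(1-\alpha_{t,(\check\mu,\check\pi)})\big(Y_{t;n}(\check\mu,\check\pi)-\beta D_n\big).
\]
Since $Y_{\tau_{n;\check t};n}=D_n$, this yields
\[
Y_{t;n}(\check\mu,\check\pi)=\beta D_n+(1-\beta)D_n\prod_{i\in T^{\tau_{n;\check t},t}_{(\check\mu,\check\pi)}}(1-\alpha_{i,(\check\mu,\check\pi)}),
\]
because $\alpha_{i,(\check\mu,\check\pi)}=0$ off the visiting times. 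In particular $Y_{t;n}\ge 0$ and is nonincreasing in $t$, so it suffices to treat $t=\tau_{n+1;\check t}$. Since $2\check\beta=1-\beta$, the claim \eqref{eq:Y_bound} is equivalent to
\[
\prod_{i\in T^{\tau_{n;\check t},\tau_{n+1;\check t}}_{(\check\mu,\check\pi)}}(1-\alpha_{i,(\check\mu,\check\pi)})\le (2+\check\delta)^{-1},
\]
uniformly in $(\check\mu,\check\pi)$. I note that the hypothesis $\|\check\Delta_t\|\le D_n$ is not needed here, since $Y$ is deterministic.

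To bound the product, I will enumerate the visits of $(\check\mu,\check\pi)$ in $[\tau_{n;\check t},\tau_{n+1;\check t})$. By Lemma~\ref{lem:time_check0}\;(iii) (applied at the endpoint, or directly via the covering argument in its proof), there are at least $k:=\lfloor \frac{\check c}{\check\kappa}\tau_{n;\check t}^w\rfloor$ of them. Along consecutive visits the counter $N$ increases by one, starting from some $N_0$ with $\lfloor \tau_{n;\check t}/T_{\operatorname{cov}}\rfloor\le N_0\le \tau_{n;\check t}$ (cf.\ Lemma~\ref{lem:time_check0}\;(iv)). Keeping only the first $k$ factors, which are all at most one, the product is bounded by $\prod_{j=N_0}^{N_0+k-1}(1-(j+1)^{-w})$. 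A variant of Lemma~\ref{lem:elem_est}\;(i) bounds this by $\exp(-k/(N_0+k)^w)$. The goal therefore reduces to showing
\[
\frac{k}{(N_0+k)^w}\ge \log(2+\check\delta).
\]

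The final step is the elementary but fiddly estimate of this ratio, which I expect to be the main obstacle: the constants $\check t_1^*$ and $\check c_1^*$ in \eqref{eq:tau1_condi1} must be matched exactly. I would write $k\ge \frac{\check c}{\check\kappa}\tau_{n;\check t}^w-1$ and $N_0+k\le \tau_{n;\check t}+\frac{\check c}{\check\kappa}\tau_{n;\check t}^w$. The floors and ceilings will be handled using $\tau_{n;\check t}\ge\check t\ge \check t_1^*$ and $\check t\ge \lceil T_{\operatorname{cov}}/(T_{\operatorname{cov}}-1)\rceil$, which makes $\lfloor\tau/T_{\operatorname{cov}}\rfloor$ comparable to $\tau/T_{\operatorname{cov}}$. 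The error terms should be of size $2(T_{\operatorname{cov}}/\check t_1^*)^w$. I would then try to obtain a lower bound of the form $\frac{\check c}{1+\check c}-2(T_{\operatorname{cov}}/\check t_1^*)^w$; using $\check\kappa<1$, one can at worst replace $\check c/\check\kappa$ by $\check c$ in the monotone expression. The condition $\check c\ge \check c_1^*$ is precisely $\frac{\check c}{1+\check c}\ge \log(2+\check\delta)+2(T_{\operatorname{cov}}/\check t_1^*)^w$, and $\check\delta<e-2$ guarantees $\log(2+\check\delta)<1$, so that $\check t_1^*$ and $\check c_1^*$ are finite and positive. If the direct comparison fails, I would instead bound $(N_0+k)^w\le \tau_{n;\check t}^w(1+\frac{\check c}{\check\kappa}\tau_{n;\check t}^{w-1})^w$ and use that $\tau_{n;\check t}^{w-1}$ is small, with the smallness controlled by $\check t_1^*$.
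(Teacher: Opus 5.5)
Your argument is correct and has the same skeleton as the paper's proof. Both use the closed form $Y_{t;n}=D_n(\beta+2\check\beta\rho_{t;n})$, truncate to the first $k=\lfloor\frac{\check c}{\check\kappa}\tau_{n;\check t}^w\rfloor$ visits in $[\tau_{n;\check t},\tau_{n+1;\check t})$, apply an exponential bound, and finish with a comparison to $\frac{\check c}{1+\check c}$ matched to $\check c_1^*$.

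The one real difference is where the visit counter is anchored, and your choice is the sound one. The paper starts the product at index $\lceil\tau_{n;\check t}/T_{\operatorname{cov}}\rceil$, citing Lemma~\ref{lem:time_check0}\;(iv). That item gives $N_0\ge\lfloor\tau_{n;\check t}/T_{\operatorname{cov}}\rfloor$, which is an \emph{upper} bound on the learning rates. Since $x\mapsto 1-(x+1)^{-w}$ is increasing, bounding $\prod(1-\alpha_i)$ from above needs an upper bound on $N_0$. The covering condition does not give $N_0\le\lceil\tau_{n;\check t}/T_{\operatorname{cov}}\rceil$: one pair can fill most of every window, so $N_0$ can be of order $\tau_{n;\check t}$. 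Your trivial bound $N_0\le\tau_{n;\check t}$ costs nothing, because the paper discards the factor $1/T_{\operatorname{cov}}$ anyway when it bounds $\frac{\tau_{n+1;\check t}}{T_{\operatorname{cov}}\tau_{n;\check t}}+\frac{1}{\tau_{n;\check t}}\le\frac{\check c}{\check\kappa}+1$.

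The step you left hedged closes in one line. Set $C:=\check c/\check\kappa$ and $\tau:=\tau_{n;\check t}\ge\check t\ge\check t_1^*$. Using $\tau^w\le\tau$, $(1+C)^w\le 1+C$, $C\ge\check c$, and $T_{\operatorname{cov}}>1$,
\begin{align*}
	\frac{k}{(N_0+k)^w}\ge\frac{C\tau^w-1}{(\tau+C\tau^w)^w}\ge\frac{C\tau^w-1}{(1+C)^w\tau^w}\ge\frac{C}{1+C}-\tau^{-w}\ge\frac{\check c}{1+\check c}-\Big(\frac{T_{\operatorname{cov}}}{\check t_1^*}\Big)^w .
\end{align*}
Since $\check c\ge\check c_1^*$, the right-hand side is at least $\log(2+\check\delta)+(T_{\operatorname{cov}}/\check t_1^*)^w$.

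Consequences for your write-up:
\begin{itemize}
	\item The hypothesis $\check t\ge\lceil T_{\operatorname{cov}}(T_{\operatorname{cov}}-1)^{-1}\rceil$ is superfluous in your version.
	\item Do not try to make $\lfloor\tau/T_{\operatorname{cov}}\rfloor$ comparable to $\tau/T_{\operatorname{cov}}$; using $N_0\approx\tau/T_{\operatorname{cov}}$ would reintroduce the wrong-direction step.
	\item Your fallback via smallness of $C\tau^{w-1}$ would not work as stated, since $\check t_1^*$ does not depend on $C$. It is also not needed.
\end{itemize}
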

\begin{proof}
	Note that $(Y_{t;n})_{t\geq  \tau_{n;\check t}}$ can be rewritten %
	for every $t>\tau_{n;\check t}$ and $(\check\mu,\check\pi)\in \check S\times \check \Pi$ by
	\begin{align}\label{eq:Y_repre_1}
		Y_{t;n}(\check \mu,\check \pi)= D_n(\beta +2\check \beta \rho_{t;n}(\check\mu,\check\pi)),\quad\mbox{where } \rho_{t;n}(\check\mu,\check\pi):=\prod_{i\in T^{\tau_{n;\check t},t}_{(\check \mu,\check \pi)}}(1-\alpha_{i,(\check\mu,\check\pi)}),
	\end{align}
	with setting $\rho_{\tau_{n;\check t};n}(\check\mu,\check\pi):=(1-\beta)D_n$ and recalling that $Y_{\tau_{n;\check t};n}=D_n$. %

	Then it holds for every $t\geq \tau_{n+1;\check t}$ and $(\check\mu,\check\pi)\in \check S\times \check \Pi$ that
	\begin{align}\label{eq:Y_repre_11}
		\begin{aligned}
			 & \rho_{t;n}(\check\mu,\check\pi)\leq \prod_{i= \lceil \frac{\tau_{n;\check t}}{T_{\operatorname{cov}}} \rceil}^{\lceil \frac{\tau_{n;\check t}}{T_{\operatorname{cov}}} \rceil+ \lfloor \frac{\check c}{\check \kappa}\tau_{n;\check t}^w  \rfloor-1}(1-{(i+1)^{-w}})\leq \exp(\operatorname{I}),          \\
			 & \quad \mbox{where } \operatorname{I}:=-\bigg(\bigg\lfloor \frac{\check c}{\check \kappa}\tau_{n;\check t}^w  \bigg\rfloor-1\bigg)\bigg(\bigg\lceil \frac{\tau_{n;\check t}}{T_{\operatorname{cov}}} \bigg\rceil+ \bigg\lfloor \frac{\check c}{\check \kappa}\tau_{n;\check t}^w  \bigg\rfloor\bigg)^{-w}.
		\end{aligned}
	\end{align}
	Here the first inequality follows from Lemma~\ref{lem:time_check0}\;(iii),\;(iv) and the fact that $t\geq \tau_{n+1;\check t}$, whereas the second inequality follows from Lemma \ref{lem:elem_est}\;(i).

	Therefore, it suffices to prove that $\operatorname{I}\leq -\log(2+\check \delta)$. By Lemma~\ref{lem:time_check0}\;(i), %
	we have
	\begin{align}\label{eq:Y_repre_2}
		\begin{aligned}
			\operatorname{I} & \leq -\bigg(\frac{\check c}{\check \kappa} \tau_{n;\check t}^w -2\bigg)\bigg(\bigg\lceil \frac{\tau_{n;\check t}}{T_{\operatorname{cov}}} \bigg\rceil+ \bigg\lfloor \frac{\check c}{\check \kappa}\tau_{n;\check t}^w  \bigg\rfloor\bigg)^{-w}       \\
			                 & \leq  {-\frac{\check c}{\check \kappa}}\bigg({\frac{\tau_{n+1;\check t}}{T_{\operatorname{cov}}\tau_{n;\check t}} +\frac{1}{\tau_{n;\check t}}  }\bigg)^{-w}+  2\bigg(\frac{T_{\operatorname{cov}}}{\tau_{n;\check t}}\bigg)^{w}:=\operatorname{II}.
		\end{aligned}
	\end{align}

	Moreover, it holds that
	\[
		{\frac{\tau_{n+1;\check t}}{T_{\operatorname{cov}}\tau_{n;\check t}} +\frac{1}{\tau_{n;\check t}}  }\leq  \frac{\tau_{n;\check t}+ T_{\operatorname{cov}}\frac{\check c}{\check \kappa} \tau_{n;\check t}^w+1}{T_{\operatorname{cov}}\tau_{n;\check t}}+\frac{1}{\tau_{n;\check t}}\leq \frac{1}{T_{\operatorname{cov}}}+\frac{\check c}{\check \kappa}+\frac{1}{\check t} \leq  \frac{\check c}{\check \kappa}+1,
	\]
	where the last inequality follows from the assumption that $\check t\geq {T_{\operatorname{cov}}}({T_{\operatorname{cov}}-1})^{-1}$.

	From this and the facts that $\tau_{n;\check t}\geq \check t$ and $\check k,w <1$, we have
	\begin{align}\label{eq:Y_repre_3}
		\operatorname{II}\leq {-\frac{\check c}{\check \kappa}}{\bigg( \frac{\check c}{\check \kappa}+1\bigg)^{-w}}+2\bigg(\frac{T_{\operatorname{cov}}}{\check t}\bigg)^{w}  %
		\leq \frac{-{\check c}}{{\check c}+1}+2\bigg(\frac{T_{\operatorname{cov}}}{\check t}\bigg)^{w}\leq -\log(2+\check \delta),
	\end{align}
	where the last inequality holds because $\check t \geq \check t_{1}^*$ and $\check c\geq \check c_{1}^*$ (see \eqref{eq:tau1_condi1}), together with the fact that $\log(2+\check \delta)<1$ (because of $\check \delta \in(0,e-2)$).

	Combining \eqref{eq:Y_repre_2} and \eqref{eq:Y_repre_3} yields that $\operatorname{I}\leq -\log(2+\check \delta)$, as claimed. %
\end{proof}

Next, we define, for every $\check t\geq 1$, $n\geq 1$, $t\in\{\tau_{n+1;\check t},\dots,\tau_{n+3;\check t}-1\}$, $l\in\{-1,0,\cdots,t-1-\tau_{n;\check t}\}$, and $(\check \mu,\check \pi)\in \check S\times \check \Pi$,
\begin{align}\label{eq:X_def}
	X_{t;n}^l(\check \mu,\check \pi):= \beta \sum_{i=\tau_{n;\check t}}^{\tau_{n;\check t}+l}\alpha_{i,(\check \mu,\check \pi)} \bigg(\prod_{j=i+1}^{t-1} (1-\alpha_{j,(\check \mu,\check \pi)})\bigg)Z_{i+1;(\check \mu,\check \pi)},\quad \mbox{$l\geq 0$},
\end{align}
with $X_{t;n}^{-1}(\check \mu,\check \pi):= 0$. Then %
the sequence $(W_{t;n})_{t= \tau_{n+1;\check t}}^{\tau_{n+3;\check t}-1}$ given in \eqref{eq:YW_seq} can be rewritten for every $t\in\{\tau_{n+1;\check t},\dots,\tau_{n+3;\check t}-1\}$ and $(\check \mu,\check \pi)\in \check S\times \check \Pi$ by
\begin{align}\label{eq:W_seq2}
	W_{t;n}(\check \mu,\check \pi)= X_{t;n}^{t-1-\tau_{n;\check t}}(\check \mu,\check \pi)= \sum_{l=0}^{t-1-\tau_{n;\check t}}\big(X_{t;n}^{l}(\check \mu,\check \pi)-X_{t;n}^{l-1}(\check \mu,\check \pi)\big).
\end{align}

\begin{lemma}\label{lem:X_properties}
	Suppose that Assumptions \ref{as:MFC}\;(i),(ii) and \ref{as:Qlearning} are satisfied. Recall the probability space $(\Omega^0,{\cal F}^0,\widehat{\mathbb{P}}^0)$ in Framework \ref{frame:Qlearning} and the filtration $\check {\mathbb{F}}^0$ in \eqref{eq:filtration_mupi}. Then for every $\check t\geq 1$, $n\geq 1$, $t\in\{\tau_{n+1;\check t},\dots,\tau_{n+3;\check t}-1\}$,  $(\check \mu,\check \pi)\in \check S\times \check \Pi$, the following hold.
	\begin{itemize}
		\item [(i)] For every $k\in\{\tau_{n;\check t},\dots,t\}$, $X_{t;n}^{k-1-\tau_{n;\check t}}(\check \mu,\check \pi)$ in \eqref{eq:X_def} is $\check{\mathcal{F}}_{k}^0$-measurable.
		\item[(ii)] $(X_{t;n}^{k-1-\tau_{n;\check t}}(\check \mu,\check \pi))_{k=\tau_{n;\check t}}^t$ is a $\widehat {\mathbb{P}}^0$-martingale w.r.t.\;$(\check{\mathcal{F}}_{k}^0)_{k=\tau_{n;\check t}}^t$.
		\item [(iii)] For every $k\in\{\tau_{n;\check t},\dots,t-1\}$, $|X_{t;n}^{k-\tau_{n;\check t}}(\check \mu,\check \pi)-X_{t;n}^{k-1-\tau_{n;\check t}}(\check \mu,\check \pi)| \leq 4 \beta \check Q_{\operatorname{max}} (\frac{T_{\operatorname{cov}}}{\tau_{n;\check t}})^w$.
	\end{itemize}
\end{lemma}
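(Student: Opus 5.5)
The proof reads everything off the explicit form \eqref{eq:X_def}. The key observation is that, because the learning rates are deterministic (the pre-sampled trajectory in Framework~\ref{frame:Qlearning}\;(ii) is fixed), the weights
\[
c_i:=\beta\,\alpha_{i,(\check\mu,\check\pi)}\prod_{j=i+1}^{t-1}(1-\alpha_{j,(\check\mu,\check\pi)})
\]
are deterministic constants. Hence $X^{l}_{t;n}$ is a deterministic linear combination of $Z_{\tau_{n;\check t}+1},\dots,Z_{\tau_{n;\check t}+l+1}$ (all at the pair $(\check\mu,\check\pi)$).

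\emph{Measurability, (i).} For $k\in\{\tau_{n;\check t},\dots,t\}$, the quantity $X^{k-1-\tau_{n;\check t}}_{t;n}$ involves only $Z_{i+1}$ with $i\le k-1$. Each such $Z_{i+1}$ is $\check{\cal F}^0_{i+1}\subseteq\check{\cal F}^0_k$-measurable by Remark~\ref{rem:msr_indep}\;(i). At $k=\tau_{n;\check t}$ the sum is empty, so $X^{-1}=0$ is trivially measurable.

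\emph{Martingale property, (ii).} Integrability is immediate from $|Z|\le 4\check Q_{\max}$ (Lemma~\ref{lem:mtg_z}). For the increments, the difference between consecutive terms is
\[
X^{k-\tau_{n;\check t}}_{t;n}-X^{k-1-\tau_{n;\check t}}_{t;n}=c_k Z_{k+1,(\check\mu,\check\pi)}.
\]
Since $c_k$ is deterministic, Lemma~\ref{lem:mtg_z} gives $\mathbb{E}^{\widehat{\mathbb P}^0}[c_kZ_{k+1}\mid\check{\cal F}^0_k]=0$, which yields the martingale property. This step is where determinism of the learning rates matters; in the asynchronous case it holds precisely because the visitation data is pre-sampled and independent of the noise. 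In the synchronous case the rates are deterministic by definition.

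\emph{Increment bound, (iii).} This is the only step with content, and I expect it to be the main obstacle. Using $|Z|\le4\check Q_{\max}$ and $\prod(1-\alpha)\le1$, we get $|c_kZ_{k+1}|\le 4\beta\check Q_{\max}\,\alpha_{k,(\check\mu,\check\pi)}$, so it suffices to show $\alpha_{k,(\check\mu,\check\pi)}\le (T_{\operatorname{cov}}/\tau_{n;\check t})^w$ for $k\ge\tau_{n;\check t}$. If $\alpha_k=0$ (no visit at time $k$) there is nothing to prove. Otherwise, Assumption~\ref{as:Qlearning}\;(ii) gives that $(\check\mu,\check\pi)$ has been visited at least $\lfloor k/T_{\operatorname{cov}}\rfloor\ge\lfloor\tau_{n;\check t}/T_{\operatorname{cov}}\rfloor$ times before $k$, as in Lemma~\ref{lem:time_check0}\;(iv). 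Therefore
\[
\alpha_k\le(\lfloor\tau_{n;\check t}/T_{\operatorname{cov}}\rfloor+1)^{-w}\le(\tau_{n;\check t}/T_{\operatorname{cov}})^{-w},
\]
using $\lfloor x\rfloor+1\ge x$. For the synchronous case, $\alpha_k=(k+1)^{-w}\le\tau_{n;\check t}^{-w}\le(T_{\operatorname{cov}}/\tau_{n;\check t})^w$ since $T_{\operatorname{cov}}>1$.
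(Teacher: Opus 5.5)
Your proposal is correct and follows essentially the same route as the paper's proof. The paper also uses the determinism of the learning rates so that the increments are deterministic multiples of $Z_{k+1,(\check\mu,\check\pi)}$. It gets measurability from Remark~\ref{rem:msr_indep} and zero conditional mean plus $|Z|\le 4\check Q_{\operatorname{max}}$ from Lemma~\ref{lem:mtg_z}. Finally, it bounds $\alpha_{k,(\check\mu,\check\pi)}\le (T_{\operatorname{cov}}/\tau_{n;\check t})^w$ via the covering-time estimate $N_{k,(\check\mu,\check\pi)}\ge\lfloor \tau_{n;\check t}/T_{\operatorname{cov}}\rfloor$, exactly as you do.
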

\begin{proof}
	Let  $\check t\geq 1$, $n\geq 1$, $t\in[\tau_{n+1;\check t},\tau_{n+3;\check t})$ and $(\check \mu,\check \pi)\in \check S\times \check \Pi$.
	Indeed, since $(\alpha_{t,(\check\mu,\check\pi)})_{t\ge 0}$ are deterministic for any $(\check \mu,\check \pi)\in \check S\times \check \Pi$, we have by  Remark \ref{rem:msr_indep}\;(ii) that
	for any~$k= \tau_{n;\check t}+1,\cdots,t,$
	\[
		X_{t;n}^{k-1-\tau_{n;\check t}}(\check \mu,\check \pi)= \beta \sum_{i=\tau_{n;\check t}}^{k-1}\alpha_{i,(\check \mu,\check \pi)} \bigg(\prod_{j=i+1}^{t-1} (1-\alpha_{j,(\check \mu,\check \pi)})\bigg)Z_{i+1;(\check \mu,\check \pi)}
	\]
	is $\check{\cal F}_{k}^0$-measurable, while $X_{t;n}^{-1}(\check \mu,\check \pi)=0$ is $\check{\cal F}_{\tau_{n;\check t}}^0$-measurable. Thus, Part (i) holds.

	Parts (ii) and (iii) follow from Lemma \ref{lem:mtg_z}. Indeed, %
	for any $k\in\{\tau_{n;\check t},\dots,t-1\}$, $\widehat{\mathbb{P}}^0$-a.s.,
	\begin{align*}
		 & \mathbb{E}^{\widehat{\mathbb{P}}^0}[X_{t;n}^{k-\tau_{n;\check t}}(\check \mu,\check \pi)-X_{t;n}^{k-1-\tau_{n;\check t}}(\check \mu,\check \pi)|\check{\cal F}_{k}^0]                                  \\
		 & \quad = \beta \alpha_{k,(\check \mu,\check \pi)}\prod_{j=k+1}^{t-1}(1-\alpha_{j,(\check \mu,\check \pi)}) \mathbb{E}^{\widehat{\mathbb{P}}^0}[Z_{k+1;(\check \mu,\check \pi)}|\check{\cal F}^0_{k}]=0.
	\end{align*}

	Moreover, for any $k\in\{\tau_{n;\check t},\dots,t-1\}$,
	\begin{align*}
		\begin{aligned}
			\big|X_{t;n}^{k-\tau_{n;\check t}}(\check \mu,\check \pi)-X_{t;n}^{k-1-\tau_{n;\check t}}(\check \mu,\check \pi)\big| & =  \beta \alpha_{k,(\check \mu, \check \pi) }\prod_{j=k+1}^{t-1}(1-\alpha_{j,(\check \mu, \check \pi)}) \big|Z_{k+1;(\check \mu,\check \pi)} \big|                              \\
			                                                                                                                      & \leq  4 \beta \check Q_{\operatorname{max}}  \alpha_{k,(\check \mu, \check \pi) }\leq  4 \beta \check Q_{\operatorname{max}}  ({T_{\operatorname{cov}}}/{\tau_{n;\check t}})^w,
		\end{aligned}
	\end{align*}
	where the last inequality holds because $N_{(\check \mu,\check \pi)}^k\geq \lfloor \frac{\tau_{n;\check t}}{T_{\operatorname{cov}}} \rfloor\geq \frac{\tau_{n;\check t}}{T_{\operatorname{cov}}}-1$ (by the covering time condition in Assumption \ref{as:Qlearning}\;(ii); see also \eqref{eq:learningrate} for definition of the asynchronous learning rates).
\end{proof}

The following lemma states Azuma's inequality \cite{azuma1967weighted} (see, e.g., Exercise 9.2.4 in \cite{Klenke14}), which will be used in the subsequent lemma.
\begin{lemma}\label{lem:azuma}
	If $(M_n)_{n\geq 0}$ is a martingale with $M_0=0$ on a filtered probability space $(\widetilde \Omega,\widetilde{{\cal F}},\widetilde{\mathbb{F}},\widetilde{\mathbb{P}})$ and there exists a sequence $(c_n)_{n\geq 1}\subseteq [0,\infty)$ such that $|M_n-M_{n-1}|\leq c_n$ $\widetilde{\mathbb{P}}$-a.s.~for all $n\geq 1$, then Azuma's inequality holds:
	\begin{align*}
		\widetilde{\mathbb{P}}[|M_n|\geq \lambda]\leq 2 \exp\bigg(-\frac{\lambda^2}{2\sum_{k=1}^nc_k^2} \bigg)\quad \mbox{for all $\lambda \geq 0$}.
	\end{align*}
\end{lemma}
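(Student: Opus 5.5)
This is the classical Azuma--Hoeffding inequality, so I will follow the standard exponential-moment (Chernoff) argument. The approach is to bound $\widetilde{\mathbb{P}}[M_n\geq \lambda]$ and $\widetilde{\mathbb{P}}[-M_n\geq \lambda]$ separately by $\exp(-\lambda^2/(2\sum_{k=1}^n c_k^2))$ and then add the two bounds. Since $(-M_n)_{n\ge0}$ is again a martingale with the same increment bounds, it suffices to treat the upper tail. I assume $\sum_{k=1}^n c_k^2>0$; if the sum vanishes, then $M_n=0$ a.s. and the bound is trivial for $\lambda>0$.

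\emph{Step 1 (Hoeffding's lemma, conditional form).} Fix $s\ge0$ and write $D_k:=M_k-M_{k-1}$. Then $\mathbb{E}^{\widetilde{\mathbb P}}[D_k|\widetilde{\cal F}_{k-1}]=0$ and $|D_k|\le c_k$ a.s. Because $x\mapsto e^{sx}$ is convex on $[-c_k,c_k]$, we have
\[
e^{sD_k}\le \frac{c_k-D_k}{2c_k}e^{-sc_k}+\frac{c_k+D_k}{2c_k}e^{sc_k}
\]
whenever $c_k>0$. Taking conditional expectations removes the terms linear in $D_k$, and gives $\mathbb{E}^{\widetilde{\mathbb P}}[e^{sD_k}|\widetilde{\cal F}_{k-1}]\le\cosh(sc_k)\le e^{s^2c_k^2/2}$. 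The last inequality follows by comparing the power series termwise, using $(2j)!\ge 2^j j!$. If $c_k=0$, then $D_k=0$ and the bound holds trivially.

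\emph{Step 2 (iteration via the tower property).} Write $e^{sM_n}=e^{sM_{n-1}}e^{sD_n}$. Since $e^{sM_{n-1}}$ is $\widetilde{\cal F}_{n-1}$-measurable and bounded (because $|M_{n-1}|\le\sum_{k<n} c_k$), Step 1 gives $\mathbb{E}[e^{sM_n}]\le e^{s^2c_n^2/2}\,\mathbb{E}[e^{sM_{n-1}}]$. Inducting down to $M_0=0$ yields $\mathbb{E}[e^{sM_n}]\le \exp(s^2\sum_{k=1}^n c_k^2/2)$.

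\emph{Step 3 (Markov's inequality and optimization in $s$).} Markov's inequality gives $\widetilde{\mathbb P}[M_n\ge\lambda]\le \exp(-s\lambda+s^2\sum_k c_k^2/2)$ for every $s\ge0$. Choosing $s=\lambda/\sum_k c_k^2$ makes the right-hand side equal to $\exp(-\lambda^2/(2\sum_k c_k^2))$. Applying the same argument to $-M$ and summing the two tails gives the factor $2$.

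The only genuinely delicate point is Step 1, namely the conditional version of Hoeffding's lemma: the convexity bound must be applied pointwise before taking conditional expectations, so that the mean-zero property kills the linear terms. Everything else is routine. Alternatively, one may simply cite \cite[Exercise 9.2.4]{Klenke14}, as the paper does.
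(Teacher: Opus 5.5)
Your proof is correct and complete. The conditional Hoeffding bound $\mathbb{E}[e^{sD_k}\mid\widetilde{\mathcal F}_{k-1}]\le\cosh(sc_k)\le e^{s^2c_k^2/2}$, the tower-property iteration, and the Chernoff choice $s=\lambda/\sum_{k}c_k^2$ are the standard proof of Azuma's inequality, and you handle the degenerate case $\sum_k c_k^2=0$ correctly. The paper gives no proof of this lemma; it only cites Azuma's article and Klenke's Exercise 9.2.4, whose intended solution is exactly this argument, so there is no different route to compare.
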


\begin{lemma}\label{lem:azuma_W}
	Suppose that Assumptions \ref{as:MFC}\;(i),(ii) and \ref{as:Qlearning} are satisfied. Recall the probability space $(\Omega^0,{\cal F}^0,\widehat{\mathbb{P}}^0)$ in Framework \ref{frame:Qlearning} and the filtration $\check {\mathbb{F}}^0$ in \eqref{eq:filtration_mupi}. Then, for any $\check t\geq 1$, $n\geq 1$, $t\in\{\tau_{n+1;\check t},\dots,\tau_{n+3;\check t}-1\}$, and  $(\check \mu,\check \pi)\in \check S\times \check \Pi$, the random variable $W_{t;n}(\check \mu,\check \pi)$ in~\eqref{eq:YW_seq} satisfies
	\begin{align*}
		\widehat{\mathbb P}^0(|W_{t;n}(\check \mu,\check \pi)|\geq \lambda ) & \leq 2\exp\bigg(-\frac{\lambda^2\tau_n^w}{2^5C_{3}(\beta \check Q_{\operatorname{max}}T_{\operatorname{cov}}^{w})^2}\bigg),\quad \mbox{for all $\lambda \geq 0$},
	\end{align*}
	where $C_3:=C_{(\check c,\check \kappa,w,T_{\operatorname{cov}})}:=T_{\operatorname{cov}}\frac{\check c}{\check \kappa}(7+5(T_{\operatorname{cov}}\frac{\check c}{\check \kappa})^w+(T_{\operatorname{cov}}\frac{\check c}{\check \kappa})^{2w})+3$. %
\end{lemma}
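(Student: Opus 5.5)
The plan is to apply Azuma's inequality (Lemma~\ref{lem:azuma}) to the martingale from Lemma~\ref{lem:X_properties}. Fix $\check t,n,t,(\check\mu,\check\pi)$ as in the statement. By \eqref{eq:W_seq2}, $W_{t;n}(\check\mu,\check\pi)$ is the terminal value $M_{t-\tau_{n;\check t}}$ of the process
\[
M_j:=X_{t;n}^{j-1}(\check\mu,\check\pi),\qquad j=0,\dots,t-\tau_{n;\check t},
\]
with $M_0=X^{-1}_{t;n}=0$. By Lemma~\ref{lem:X_properties}\;(i),(ii), $(M_j)$ is a $\widehat{\mathbb P}^0$-martingale for the shifted filtration $(\check{\mathcal F}^0_{\tau_{n;\check t}+j})_j$.

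The increments are bounded by Lemma~\ref{lem:X_properties}\;(iii). Only indices $k\in T^{\tau_{n;\check t},t}_{(\check\mu,\check\pi)}$ can produce nonzero increments, since $\alpha_{k,(\check\mu,\check\pi)}=0$ otherwise in the asynchronous case. Everywhere, however, we have the crude bound $c_k:=4\beta\check Q_{\max}(T_{\rm cov}/\tau_{n;\check t})^w$. Lemma~\ref{lem:azuma} then gives
\[
\widehat{\mathbb P}^0(|W_{t;n}|\ge\lambda)\le 2\exp\Big(-\frac{\lambda^2}{2\,(t-\tau_{n;\check t})\,16\beta^2\check Q_{\max}^2T_{\rm cov}^{2w}\tau_{n;\check t}^{-2w}}\Big).
\]
It then suffices to show $t-\tau_{n;\check t}\le C_3\,\tau_{n;\check t}^{w}$. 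With that, the exponent becomes $-\lambda^2\tau_{n}^{w}/(2^5C_3(\beta\check Q_{\max}T_{\rm cov}^w)^2)$, exactly as stated.

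The main step is this counting bound on the window length. Since $t<\tau_{n+3;\check t}$, we have $t-\tau_{n;\check t}\le \sum_{j=0}^{2}(\tau_{n+j+1}-\tau_{n+j})$. Write $\check C:=T_{\rm cov}\check c/\check\kappa\ge1$. Each gap satisfies $\tau_{m+1}-\tau_m\le \check C\tau_m^w+1$, and we need to control $\tau_{n+1}^w,\tau_{n+2}^w$ by $\tau_n^w$. Using $\tau_n\ge1$, we get $\tau_{n+1}\le \tau_n+\check C\tau_n^w+1\le(2+\check C)\tau_n$. Subadditivity of $x\mapsto x^w$ then gives
\[
\tau_{n+1}^w\le \tau_n^w+\check C^w\tau_n^{w^2}+1\le (2+\check C^w)\tau_n^w .
\]
Similarly, $\tau_{n+2}^w\le \tau_{n+1}^w+\check C^w\tau_{n+1}^{w^2}+1$, which after substituting yields a bound of the form $(a+b\check C^w+\check C^{2w})\tau_n^w$ with small integer constants $a,b$. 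Summing the three gaps gives $\check C(\text{const}+\text{const}\,\check C^w+\check C^{2w})\tau_n^w+3\le C_3\tau_n^w$. I expect some care is needed to land on exactly the constants $7,5,1,3$ in $C_3$. If the bookkeeping yields slightly different integers, the approach is unchanged, and I would adjust by using $\tau_n^{w^2}\le\tau_n^w$ and $1\le\tau_n^w$ more or less aggressively.

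If a sharper count is desired, one could replace $t-\tau_{n;\check t}$ by the number of visits $|T^{\tau_n,t}_{(\check\mu,\check\pi)}|$. This is unnecessary, since the crude count already gives the claimed form. The remaining routine check is that Azuma's lemma applies with a finite horizon, which it does after extending $M$ constantly beyond $t-\tau_{n;\check t}$.
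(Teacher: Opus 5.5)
Your proposal is correct and follows essentially the same route as the paper. The paper also applies Azuma's inequality to the martingale of Lemma~\ref{lem:X_properties} with the uniform increment bound $4\beta\check Q_{\operatorname{max}}(T_{\operatorname{cov}}/\tau_{n;\check t})^w$. It then bounds $t-\tau_{n;\check t}\le\tau_{n+3;\check t}-\tau_{n;\check t}\le C_3\tau_{n;\check t}^w$ using subadditivity of $x\mapsto x^w$. This gives $\tau_{n+1}^w\le(2+\check C^w)\tau_n^w$, and applying the same one-step bound again gives $\tau_{n+2}^w\le(2+\check C^w)^2\tau_n^w$. Summing, $(2+\check C^w)^2+(2+\check C^w)+1=7+5\check C^w+\check C^{2w}$ reproduces $C_3$ exactly. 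Your alternative substitution gives the smaller factor $6+4\check C^w+\check C^{2w}$, which also suffices, so no adjustment of constants is needed.
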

\begin{proof}
	Let  $\check t\geq 1$, $n\geq 1$, $t\in\{\tau_{n+1;\check t},\dots,\tau_{n+3;\check t}-1\}$, and $(\check \mu,\check \pi)\in \check S\times \check \Pi$ be given. Note that \(W_{t;n}(\check \mu,\check \pi)= X_{t;n}^{t-1-\tau_{n;\check t}}(\check \mu,\check \pi)\); see \eqref{eq:W_seq2}. From this and Lemma \ref{lem:X_properties}, we apply Azuma's inequality (see Lemma \ref{lem:azuma}) to $(X_{t;n}^{k-1-\tau_{n;\check t}}(\check \mu,\check \pi))_{k=\tau_{n;\check t}}^t$ to have for all $\lambda \geq 0$
	\begin{align*}
		\begin{aligned}
			\widehat{\mathbb P}^0(|W_{t;n}(\check \mu,\check \pi)|\geq \lambda )
			 & \leq 2\exp\bigg(-\frac{\lambda^2}{2(t-\tau_{n;\check t})(4 \beta \check Q_{\operatorname{max}}  (\frac{T_{\operatorname{cov}}}{\tau_{n;\check t}})^w)^2}\bigg)                   \\
			 & \leq 2\exp\bigg(-\frac{\lambda^2}{2(\tau_{n+3;\check t}-\tau_{n;\check t})(4\beta \check Q_{\operatorname{max}}  (\frac{T_{\operatorname{cov}}}{\tau_{n;\check t}})^w)^2}\bigg).
		\end{aligned}
	\end{align*}

	Therefore, it suffices to show that \(
	\tau_{n+3;\check t}-\tau_{n;\check t}\leq C_3\tau_{n;\check t}^w.\)
	From the inequality that $(a+b+c)^w\leq a^w+b^w+c^w$ for all $a,b,c\geq 0$ (as $w<1$), we have
	\begin{align}\label{eq:tau_grow}
		\tau_{n+1;\check t}^w\leq   \bigg(\tau_{n;\check t}+1+T_{\operatorname{cov}}\frac{\check c}{\check \kappa} \tau_{n;\check t}\bigg)^w\leq \tau_{n;\check t}^w+1+ \bigg(T_{\operatorname{cov}} \frac{\check c}{\check \kappa} \tau_{n;\check t}\bigg)^w\leq \widehat C_3\tau_{n;\check t}^w,%
	\end{align}
	where $\widehat C_3:=2+(T_{\operatorname{cov}}\frac{\check c}{\check \kappa})^w>0$. Similarly, we have $\tau_{n+2;\check t}^w\leq \widehat C_3 \tau_{n+1;\check t}^w\leq  \widehat C_3^2 \tau_{n;\check t}^w$.

	Thus, we have %
	\[
		\tau_{n+3;\check t}-\tau_{n;\check t}%
		\leq T_{\operatorname{cov}}\frac{\check c}{\check \kappa}(\tau_{n+2;\check t}^w+\tau_{n+1;\check t}^w+\tau_{n;\check t}^w)+3\leq T_{\operatorname{cov}}\frac{\check c}{\check \kappa}(\widehat C_3^2+\widehat C_3+1)\tau_{n;\check t}^w+3\leq C_{3} \tau_{n;\check t}^w,
	\]
	as claimed.  This completes the proof.
\end{proof}

\begin{lemma}\label{lem:est_W}
	Suppose that Assumptions \ref{as:MFC}\;(i),(ii) and \ref{as:Qlearning} are satisfied. Recall $D_1=2\check Q_{\operatorname{max}}$ (see Definition \ref{dfn:epoch}\;(iii)) and let $\underline{D}\in (0,D_1]$ be such~that %
	\begin{align}\label{eq:condi_a(D)}
		2a(\underline D)\log(a(\underline D))>1,\quad\mbox{where }\; a(\underline D):=a_{(\check c,\check \kappa,\check \delta,w,T_{\operatorname{cov}},\beta)}(\underline D):=\frac{2^6C_{3}(\beta \check Q_{\operatorname{max}}T_{\operatorname{cov}}^{w})^2}{({\check \delta} (2+\check \delta)^{-1} \check \beta \underline D)^2}>0,
	\end{align}
	with $\check \delta\in(0,e-2)$ and $C_{3}>0$ given in Lemma \ref{lem:Y_bound} and Lemma~\ref{lem:azuma_W}, respectively.  %
	Moreover, assume that the initial epoch's length $\check t$ in Definition \ref{dfn:epoch} satisfies \[
		\check t\geq \Big\lceil \big(2a(\underline D)\log( a(\underline D))\big)^{\frac{1}{w}}\Big\rceil.\]
	Then for any $\underline n\geq 1$ such that $D_{\underline n}\geq \underline{D}$, the following holds:
	\begin{align*}
		\begin{aligned}
			\widehat{\mathbb{P}}^0\bigg(\bigcap_{n=1}^{\underline n}\bigcap_{t=\tau_{n+1;\check t}}^{\tau_{n+3;\check t}-1}\bigcap_{(\check \mu,\check \pi)\in \check S\times \check \Pi}\big\{|W_{t;n}(\check \mu,\check \pi)| < {\check \delta} (2+\check \delta)^{-1} \check \beta D_n\big\} \bigg)%
			\geq  1- \frac{2|\check S||\check A|^{|\check S|}C_{3} \underline n}{\exp({\check t^w}/{a(\underline D)})}.
		\end{aligned}
	\end{align*}
\end{lemma}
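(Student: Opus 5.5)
The plan is a union bound over the complement events, followed by Lemma~\ref{lem:azuma_W} and the elementary inequality in Lemma~\ref{lem:elem_est}\;(ii). Set $\lambda_n:=\check\delta(2+\check\delta)^{-1}\check\beta D_n$. The complement of the intersection is the union, over $n\le\underline n$, $t\in\{\tau_{n+1;\check t},\dots,\tau_{n+3;\check t}-1\}$ and $(\check\mu,\check\pi)\in\check S\times\check\Pi$, of the events $\{|W_{t;n}(\check\mu,\check\pi)|\ge\lambda_n\}$. The number of pairs is at most $|\check S||\check \Pi|\le|\check S||\check A|^{|\check S|}$; here we use $|\check\Pi|=|\check A|^{|S|}$, and the stated count is a looser version of this. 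For fixed $n$, the number of indices $t$ is $\tau_{n+3;\check t}-\tau_{n+1;\check t}\le C_3\tau_{n;\check t}^w$, by the same estimate as in the proof of Lemma~\ref{lem:azuma_W}.

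For each term, Lemma~\ref{lem:azuma_W} gives
\[
\widehat{\mathbb P}^0(|W_{t;n}|\ge\lambda_n)\le 2\exp\Big(-\frac{\lambda_n^2\tau_{n;\check t}^w}{2^5C_3(\beta\check Q_{\max}T_{\rm cov}^w)^2}\Big).
\]
Since $D_n\ge D_{\underline n}\ge\underline D$ for $n\le\underline n$ (the sequence $D_n$ is decreasing), we have $\lambda_n\ge\check\delta(2+\check\delta)^{-1}\check\beta\underline D$. By the definition of $a(\underline D)$, the exponent is then at most $-2\tau_{n;\check t}^w/a(\underline D)$. The extra factor $2$ comes from the constant $2^6$ in $a$ versus the $2^5$ in Lemma~\ref{lem:azuma_W}.

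The $n$-th group therefore contributes at most
\[
2|\check S||\check A|^{|\check S|}\,C_3\,\tau_{n;\check t}^w\exp\big(-2\tau_{n;\check t}^w/a(\underline D)\big).
\]
Apply Lemma~\ref{lem:elem_est}\;(ii) with $a=a(\underline D)$ and $A=\tau_{n;\check t}^w$. This is admissible because $2a\log a>1$ by \eqref{eq:condi_a(D)}, and because $A\ge\check t^w\ge 2a\log a$ by the assumption on $\check t$ and $\tau_{n;\check t}\ge\check t$. (If equality is an issue, apply the lemma with a strict margin or use continuity.) This yields $A e^{-2A/a}\le e^{-A/a}\le e^{-\check t^w/a(\underline D)}$. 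Summing over $n=1,\dots,\underline n$ gives a total failure probability of at most $2|\check S||\check A|^{|\check S|}C_3\underline n\,e^{-\check t^w/a(\underline D)}$, which is the claim.

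The only delicate point is the bookkeeping of the count of $t$ indices. Its product with the Azuma tail must have exactly the form $A e^{-2A/a}$, so that Lemma~\ref{lem:elem_est}\;(ii) absorbs the polynomial factor $\tau_{n;\check t}^w$. One must also check that the matching $2^6$ versus $2^5$ constants produce the factor $2$ in the exponent. Everything else is a direct union bound.
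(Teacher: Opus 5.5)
Your proposal is correct and follows essentially the same route as the paper: a union bound over $(n,t,\check\mu,\check\pi)$, Lemma~\ref{lem:azuma_W} combined with $D_n\ge\underline D$ so that the $2^5$ versus $2^6$ constants give the exponent $-2\tau_{n;\check t}^w/a(\underline D)$, the count $\tau_{n+3;\check t}-\tau_{n+1;\check t}\le C_3\tau_{n;\check t}^w$ via \eqref{eq:tau_grow}, and Lemma~\ref{lem:elem_est}\;(ii) to absorb the factor $\tau_{n;\check t}^w$. Two of your remarks are, if anything, more careful than the paper's proof: the boundary case $\check t^w=2a(\underline D)\log a(\underline D)$, which the paper passes over and which is settled by continuity, and the count $|\check\Pi|=|\check A|^{|S|}$, where the paper writes $|\check A|^{|\check S|}$; note that replacing the former by the latter tacitly uses $|S|\le|\check S|$.
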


\begin{proof} Let $\underline n\in \mathbb{N}$ be such that $D_{\underline n}\geq \underline{D}$. Then we have by Lemma \ref{lem:azuma_W} that
	\begin{align*}%
		\begin{aligned}
			 & \widehat{\mathbb{P}}^0\bigg(\bigcap_{n=1}^{\underline n}\bigcap_{t=\tau_{n+1;\check t}}^{\tau_{n+3;\check t}-1}\bigcap_{(\check \mu,\check \pi)\in \check S\times \check \Pi}\big\{|W_{t;n}(\check \mu,\check \pi)| < {\check \delta} (2+\check \delta)^{-1} \check \beta D_n\big\} \bigg) \\
			 & \quad \geq 1- \sum_{n=1}^{\underline n}  \sum_{t=\tau_{n+1;\check t}}^{\tau_{n+3;\check t}-1} \sum_{(\check \mu,\check \pi)\in \check S\times \check \Pi}\widehat{\mathbb{P}}^0(|W_{t;n}(\check \mu,\check \pi)| \geq {\check \delta} (2+\check \delta)^{-1} \check \beta D_n)             \\
			 & \quad \geq 1- 2|\check S||\check \Pi|\sum_{n=1}^{\underline n}  \bigg(T_{\operatorname{cov}}\frac{\check c}{\check \kappa } (\tau_{n+2;\check t}^w+\tau_{n+1;\check t}^w)+2\bigg)\exp(-{2 \tau_{n;\check t}^w}/{a(\underline D)})=:\operatorname{I},
		\end{aligned}
	\end{align*}
	where the last inequality holds because $D_n\geq \underline D$ for all $0\leq n\leq \underline n$.

	Moreover, using the estimate in \eqref{eq:tau_grow} and the constant $C_3>0$ in Lemma \ref{lem:azuma_W}, we have
	\begin{align*}%
		\operatorname{I}\geq
		1- 2|\check S||\check \Pi|C_{3} \sum_{n=1}^{\underline n}   \tau_{n;\check t}^w\exp(-{2\tau_{n;\check t}^w}/{a(\underline D)})=: \operatorname{II}.
	\end{align*}

	Moreover, since \(2a(\underline D)\log(a(\underline D))>1\) and \(
	\check t\geq \lceil (2a(\underline D)\log( a(\underline D)))^{\frac{1}{w}}\rceil\) by assumption, %
	we apply Lemma \ref{lem:elem_est}\;(ii) to have for every $n=1,\cdots,\underline n$,
	\begin{align*}
		\tau_{n;\check t}^w\exp(-{2\tau_{n;\check t}^w}/{a(\underline D)})\leq  \exp(-{\tau_{n;\check t}^w}/{a(\underline D)})\leq   \exp(-{\check t^w}/{a(\underline D)}).
	\end{align*}

	Thus, we conclude the proof by noting that
	$|\check \Pi|=|\check A|^{|\check S|}$.
\end{proof}

Finally, we present the proof of the finite-time iteration bound analysis for $Q$-learning algorithm. %

\begin{proof}[Proof of Theorem \ref{thm:asynchro_rate}]
	We first consider the asynchronous version of the $Q$-learning algorithm in Framework~\ref{frame:Qlearning}~(i),(iii). The proof consists of four steps, which rely on the following preliminary settings.

	Let $\widehat \varepsilon>0$ and $\widehat \delta \in(0,1)$ be given. We first set
	\begin{align}\label{eq:optimal_n*}
		n^*:= \max\{\lceil {\check \beta}^{-1}{\log(2\check Q_{\operatorname{max}}\widehat \varepsilon^{-1})}\rceil,1\}.
	\end{align}

	Recalling $\check t_{1}^*\geq 1$ and $\check c_1^*>0$ introduced in~\eqref{eq:tau1_condi1}, we then define
	\begin{align}\label{eq:optimal_checkc}
		\check c:= \max\{\check c_1^*,1\}.
	\end{align}

	Moreover, we define
	\begin{align}\label{eq:perturb_init}
		\check t^*_2:= \max\bigg\{\bigg\lceil
		\bigg(\frac{2T_{\rm cov}\frac{\check c}{\check\kappa}w(n^*-1)}{\log 2}\bigg)^{\frac 1{1-w}}\bigg\rceil,\bigg\lceil
		\bigg(\frac{2n^*+1}{2T_{\rm cov}\frac{\check c}{\check\kappa}}\bigg)^{\frac1w}\bigg\rceil,1
		\bigg\}.
	\end{align}
	Then by Lemma \ref{lem:perturb_init}, we have for any $t\geq \check t_2^*$
	\begin{align}\label{eq:overlap_n*}
		\tau_{n^*+2;\check t}-1\geq \tau_{n^*;\check t +1}.
	\end{align}

	In what follows, we construct the (least) initial epoch length $\check t^*\geq 1$ such that all conditions imposed in Lemmas \ref{lem:separate_est}, \ref{lem:Y_bound}, and \ref{lem:est_W} hold, while \eqref{eq:overlap_n*} holds for all $\check t\geq \check t^*$.

	To this end, we let $\sigma^*\in(0,1)$ be so that %
	\begin{align}\label{eq:condi_sigma}
		D_{n^*}< \widehat \varepsilon,\quad D_{n^*}\geq \sigma^* \widehat \varepsilon,\quad \mbox{and}\quad 2a(\sigma^* \widehat\varepsilon)\log(a(\sigma^* \widehat\varepsilon))>1,
	\end{align}
	where $a(\sigma^* \widehat\varepsilon)>0$ is defined as in \eqref{eq:condi_a(D)} (with replacing $\underline D$ by $\sigma^* \widehat\varepsilon$ therein), we define
	$\check t^*\geq 1$ by
	\begin{align}\label{eq:optimal_tau1}
		\check t^*:=\max\{\check t_{1}^*,\lceil{T_{\operatorname{cov}}}({T_{\operatorname{cov}}-1})^{-1}\rceil,\check t_{2}^*,\check t_{3}^*,\check t_{4}^*\},
	\end{align}
	where $\check t_{1}^*$ and $\check t_2^*$ are given in \eqref{eq:tau1_condi1} and \eqref{eq:perturb_init}, respectively, $\check t_3^*$ and $\check t_4^*$ are given by
	\begin{align}\label{eq:tau1_final}
		\begin{aligned}
			 & \check t_{3}^*:=\big\lceil \big(2a(\sigma^* \widehat\varepsilon)\log( a(\sigma^* \widehat\varepsilon))\big)^{\frac{1}{w}}\big\rceil,                                                     \\
			 & \check t_{4}^*:=\bigg\lceil \bigg(a(\sigma^*\widehat \varepsilon)\log\Big( \frac{2 |\check S||\check A|^{|\check S|}C_{3} n^* }{\widehat \delta} \Big) \bigg)^{\frac{1}{w}} \bigg\rceil,
		\end{aligned}
	\end{align}
	with the constant $C_3>0$ given in Lemma \ref{lem:azuma_W}, which depends on the constant $\check c\geq 1$ in~\eqref{eq:optimal_checkc}.

	Last, define, for every $\check t\geq \check t^*$ and $k\geq 1$,
	\begin{align*}
		\begin{aligned}
			\operatorname{I}_k(\check t) & :=\bigcap_{t=\tau_{k+1;\check t}}^{\tau_{k+3;\check t}-1}\big\{\|\check \Delta_t\|_{\check S\times \check \Pi} < D_{k+1}\big\},                                                                                                      \\
			\operatorname{J}_k(\check t) & :=\bigcap_{t=\tau_{k+1;\check t}}^{\tau_{k+3;\check t}-1}\bigcap_{(\check \mu,\check\pi)\in\check S\times\check \Pi }^{}\big\{|W_{t;k}(\check \mu,\check \pi)|\leq {\check \delta}({2+\check \delta})^{-1} \check \beta D_{k}\big\}.
		\end{aligned}
	\end{align*}

	\noindent {\it Step 1.} We claim that for any $\check t\geq \check t^*$ and  $n\geq 1$,
	\(
	\cap_{k=1}^{n} \operatorname{J}_k(\check t)\subseteq \cap_{k=1}^{n} \operatorname{I}_k(\check t).
	\)

	Let $\check t\geq \check t^*$ be given. We first show that \(
	\operatorname{J}_1(\check t)\subseteq \operatorname{I}_1(\check t),
	\)
	i.e., the case $n=1$. By Lemma \ref{lem:liftedoptimalQ} and Lemma~\ref{lem:Q_est}\;(iii), we have that for every $t\in \{\tau_{1;\check t},\dots,\tau_{3;\check t}-1\}$,
	\[
		\|\check \Delta_t\|_{ \check S\times \check \Pi}%
		\leq \|\check Q_t\|_{ \check S\times \check \Pi}+\|\check Q^*\|_{ \check S\times \check \Pi}\leq D_1=2\check Q_{\operatorname{max}}.
	\]

	Moreover, since $\check t\geq \{\check t_1^*,\lceil{T_{\operatorname{cov}}}({T_{\operatorname{cov}}-1})^{-1}\rceil\}$ and $\check c= \max\{\check c_1,1\}$ (see \eqref{eq:optimal_checkc}, \eqref{eq:optimal_tau1}), we can apply Lemma \ref{lem:separate_est} and Lemma \ref{lem:Y_bound} to have for every $t\geq \tau_{2;\check t}$ and $(\check \mu,\check \pi)\in \check S\times \check \Pi$
	\begin{align*}%
		|\check\Delta_t(\check \mu,\check \pi)|\leq Y_{t;1}(\check \mu,\check \pi)+|W_{t;1}(\check \mu,\check \pi)|
		\leq (\beta+{2}\check \beta(2+\check \delta)^{-1})D_1 + |W_{t;1}(\check \mu,\check \pi)|,
	\end{align*}
	where the first inequality holds because $(Y_{t;1})_{t\geq \tau_{1;\check t}}$ is nonnegative (see \eqref{eq:YW_seq}).

	Moreover, since $(\beta+{2}\check \beta(2+\check \delta)^{-1})D_1+{\check \delta}({2+\check \delta})^{-1} \check \beta D_{1}=(1-\check \beta) D_1=D_2$ (see Definition \ref{dfn:epoch}), we have \(
	\operatorname{J}_1(\check t)\subseteq \operatorname{I}_1(\check t),
	\)
	as claimed.

	Inductively, on ${\operatorname{I}}_1(\check t)$, we have  $\|\check \Delta_t\|_{ \check S\times \check \Pi}\leq D_2$ for every $t\in \{\tau_{2;\check t},\dots,\tau_{4;\check t}-1\}$. Repeating the arguments for the case $n=1$ with $(D_2;\{\tau_{2;\check t},\dots,\tau_{4;\check t}-1\})$ in place of $(D_1;\{\tau_{1;\check t},\dots,\tau_{3;\check t}-1\})$, we obtain
	\(
	\operatorname{J}_2(\check t)\subseteq   \operatorname{I}_2(\check t)\) on ${\operatorname{I}}_1(\check t)$,
	and hence $\operatorname{J}_2(\check t)\cap\, {\operatorname{I}}_1(\check t)\subseteq \operatorname{I}_1(\check t) \cap \,\operatorname{I}_2(\check t)$. %
	Combining this with \(\operatorname{J}_1(\check t)\subseteq \operatorname{I}_1(\check t)\), we have
	\(
	{\operatorname{J}}_1(\check t)\cap\,\operatorname{J}_2 (\check t)\subseteq \operatorname{I}_1(\check t)\cap\,\operatorname{I}_2(\check t).%
	\)

	By using the same arguments presented for the case $n=2$ inductively, we have the desired inclusion property for any $n\geq 1$. Moreover, since $\check t\geq \check t^*$ is arbitrary, the claim follows.

	\noindent {\it Step 2.} Recalling that  $D_{n^*}\leq \widehat \varepsilon$ (by definition of $n^*$ in \eqref{eq:optimal_n*}), we have by Step 1 that for any~$\check t\geq \check t^*$
	\begin{align}
		\begin{aligned}
			\widehat{\mathbb{P}}^0\bigg(\bigcap_{t=\tau_{n^*;\check t}}^{\tau_{n^*+2;\check t}-1}\big\{\|\check \Delta_t\|_{\check S\times \check \Pi} < \widehat \varepsilon\big\}\bigg) & \geq  \widehat{\mathbb{P}}^0\big( \operatorname{I}_{n^*-1}(\check t )\big)                                                                                                                                                    \\
			                                                                                                                                                                              & \geq \widehat{\mathbb{P}}^0\bigg(\bigcap_{k=1}^{n^*-1}\operatorname{I}_{k}(\check t) \bigg)\geq \widehat{\mathbb{P}}^0\bigg(\bigcap_{k=1}^{n^*-1}\operatorname{J}_{k}(\check t) \bigg)=:\widehat{\operatorname{P}}(\check t).
		\end{aligned}
	\end{align}

	Since $D_{n}\geq \sigma^* \widehat\varepsilon$ for any $n=1,\dots,n^*$, and $2a(\sigma^* \widehat\varepsilon)\log(a(\sigma^* \widehat\varepsilon))>1$ (see \eqref{eq:condi_sigma}) and $\check t\geq \check t_{3}^*$ (see \eqref{eq:optimal_tau1}, \eqref{eq:tau1_final}), we can apply Lemma \ref{lem:est_W} to have, for every~$\check t\geq \check t^*$,
	\begin{align*}
		\widehat{\operatorname{P}}(\check t)\geq   1- \frac{2|\check S||\check A|^{|\check S|}C_{3} n^*}{\exp({\check t^w}/{a(\sigma^*\widehat \varepsilon)})}
		\geq 1-\widehat \delta, %
	\end{align*}
	where the last inequality follows from $\check t\geq \check t_{4}^*$; see \eqref{eq:optimal_tau1}, \eqref{eq:tau1_final}.

	\noindent{\it Step 3.} Using $n^*$ and $\check t^*$ (see \eqref{eq:optimal_n*}, \eqref{eq:optimal_tau1}), we define $T^*:=\tau_{n^*;\check t^*}$.

	Since \eqref{eq:overlap_n*} holds for all $t\geq \check t^*$ (by definition of $\check t^*$ in \eqref{eq:optimal_tau1}), it holds that for all $m\geq 0$
	\[
		\{\tau_{n^*;\check t^*+m},\dots,\tau_{n^*+2;\check t^*+m}-1\}\cap \{\tau_{n^*;\check t^*+m+1},\dots,\tau_{n^*+2;\check t^*+m+1}-1\}\neq \emptyset.
	\]
	Hence, for any $T\geq T^*$, there exists $m_T\geq 0$ such that
	\(
	T\in \{\tau_{n^*;\check t^*+m_T},\dots,\tau_{n^*+2;\check t^*+m_T}-1\}.
	\)

	From this, we have by Step 2 and Step 3 that for any $T\geq T^*$,
	\begin{align}\label{eq:Step3_asynchro}
		\begin{aligned}
			\widehat{\mathbb{P}}^0(\|\check Q_{T}-\check Q^*  \|_{\check S\times \check \Pi} \leq \widehat \varepsilon )=\widehat{\mathbb{P}}^0(\|\check \Delta_{T} \|_{\check S\times \check \Pi}  \leq \widehat \varepsilon ) & \geq \widehat{\mathbb{P}}^0\bigg(\bigcap_{t=\tau_{n^*;\check t^*+m_T}}^{\tau_{n^*+2;\check t^*+m_T}-1}\big\{\|\check \Delta_t\|_{\check S\times \check \Pi} < \widehat \varepsilon\big\}\bigg) \\
			                                                                                                                                                                                                                    & \geq \widehat{\operatorname{P}}(\check t^*+m_T) \geq 1-\widehat \delta.
		\end{aligned}
	\end{align}

	Hence, the desired result in \eqref{eq:Qconverge2} follows from combining this with the discretization error estimates given in~\eqref{eq:prj_error}.

	\noindent {\it Step 4.} It remains to compute the order of $T^*=\tau_{n^*;\check t^*}$. To this end, we note that\footnote{\label{footnote:bigtheta}For any  nonnegative functions $f,g:\mathbb{N}\to \mathbb{R}$, we write $f(n)=\Theta(g(n))$ if and only if there exist some $c,C>0$ and $\overline n\in \mathbb{N}$ such that $cg(n)\leq f(n)\leq Cg(n)$ for all $n\geq \overline{n}$.} \(
	\check t_{1}^*=\Theta(T_{\operatorname{cov}})
	\)
	and $\check c_{1}^*= \Theta(1)$ (see \eqref{eq:tau1_condi1}), hence $\check{c}$ in \eqref{eq:optimal_checkc} satisfies $\check{c}=\Theta(1)$.

	Recall that $a(\sigma^*\widehat \varepsilon)$ is given in \eqref{eq:condi_a(D)} (with replacing $\underline D$ as $\sigma^*\widehat \varepsilon$ therein), $C_3>0$ is given in Lemma~\ref{lem:azuma_W}, and $\check Q_{\operatorname{max}}=\frac{C_r}{1-3\beta}$ and $\check \beta = \frac{1-\beta}{2}$. Since $\beta \in[0,\frac{1}{3}\wedge (2C_{\operatorname{F}})^{-1})$ (by Assumption~\ref{as:Qlearning}\;(i)), %
	we have that
	\(
	a(\sigma^*\widehat \varepsilon)=\Theta(\frac{\check N_1}{\widehat \varepsilon^2})\) and  \(C_3=\Theta(T_{\operatorname{cov}}^{1+3w}), %
	\)
	where $\check N_1$ is given in \eqref{eq:order_parameter}.

	Moreover, since $n^*$ in \eqref{eq:optimal_n*} satisfies $n^*=\Theta(\log(\frac{\check Q_{\operatorname{max}}}{\widehat \varepsilon}))$,  $\check t^*$ in \eqref{eq:optimal_tau1} can be expressed in terms of the algorithmic parameters as an order of
	\[
		\check t^*= O\bigg(\bigg(\frac{\check N_1}{\widehat \varepsilon^2} \max \bigg\{\log\bigg(\frac{\check N_2}{\widehat{\varepsilon}}\bigg),\log\bigg(\frac{\check N_3}{ \widehat \delta}\log \bigg(\frac{\check Q_{\operatorname{max}}}{\widehat \varepsilon}\bigg)\bigg)  \bigg\}\bigg)^{\frac{1}{w}}+\bigg(T_{\operatorname{cov}}\log\bigg(\frac{\check Q_{\operatorname{max}}}{\widehat \varepsilon}\bigg)\bigg)^{\frac{1}{1-w}}\bigg),
	\]
	where $\check N_2$ and $\check N_3$ are given in \eqref{eq:order_parameter}.%

	From this and Lemma \ref{lem:time_check0}\;(ii), we have the order of the iteration number $T^*$ given~by
	\begin{align*}
		T^*=\tau_{n^*;\check t^*}= O(\check t^*+(T_{\operatorname{cov}}n^*)^{\frac{1}{1-w}})=O(\check t^*),
	\end{align*}
	as desired.

	Therefore, the proof of the asynchronous version of the $Q$-learning algorithm is complete.

	Finally, for the synchronous version of the $Q$-learning algorithm under Framework~\ref{frame:Qlearning}\,(i),(ii), the construction of the iteration number $T^*\in\mathbb{N}$ is analogous to that for the asynchronous version but involves much simpler calculations, since the covering time condition in Assumption~\ref{as:Qlearning}\,(ii) is no longer required. In particular, %
	for any $\check t\geq 1$,
	the sequence $(\tau_{n;\check t})_{n\ge0}$ in Definition~\ref{dfn:epoch} can be redefined as
	\[
		\tau_{n+1;\check t}:=\tau_{n;\check t}+\bigg\lceil \frac{\check c}{\check\kappa}\tau_{n;\check t}^{w}\bigg\rceil,
		\quad n\ge0,\quad \mbox{with $\tau_{1;\check t}:=\check t$}.
	\]
	Moreover, in the resulting constants---such as $\check t_{1}^*$ and $\check c_1^*$ in \eqref{eq:tau1_condi1}, $\check t_2^*$ in \eqref{eq:perturb_init}, $C_3$ in \eqref{lem:azuma_W}, and $a(\underline D)$ in \eqref{eq:condi_a(D)}---the parameter $T_{\operatorname{cov}}$ can be taken as $1$ so that the results in Lemmas~\ref{lem:separate_est}, \ref{lem:Y_bound}, and \ref{lem:est_W} still hold, while \eqref{eq:overlap_n*} holds for all $\check t\geq \check t^*$. As a consequence, all the arguments presented in the main proof for the asynchronous case also apply to the synchronous version. This completes the proof.
\end{proof}

\subsection{Proof of Corollary \ref{cor:Qlearningrate}}\label{sec:proof:cor:Qlearningrate}
We only present the proof for the asynchronous case, since the synchronous case can be proven analogously following the same argument.
\begin{proof}
	Let $\widehat\delta\in(0,1)$ be given.
	By the discretization error estimate in \eqref{eq:prj_error}, it suffices to show that there exist some $ C_{\widehat \delta}>0$ and $T_{\widehat \delta}\geq 1$ such that
	\begin{align}\label{eq:est_claim}
		\widehat{\mathbb{P}}^0\bigg(\|\check Q_T-\check Q^*\|_{\check S\times \check \Pi}\leq C_{\widehat \delta} \sqrt{\frac{\log T}{T^w}} \bigg)>1-\widehat \delta\quad \mbox{for any $T\geq  T_{\widehat \delta}$}.
	\end{align}

	To this end, for every \(\widehat\varepsilon\in(0,1)\), we denote by
	\(T^*(\widehat\varepsilon,\widehat\delta)\) the iteration number \(T^*\) in Theorem~\ref{thm:asynchro_rate}\;(ii) under the prescribed error level \(\widehat\varepsilon\) and confidence level \(\widehat\delta\). We recall from the proof of Theorem~\ref{thm:asynchro_rate}\;(ii), particularly Step~3, that
	\(
	T^*(\widehat\varepsilon,\widehat\delta)
	=T^*=
	\tau_{n^*;\check t^*},
	\)
	where \(n^*\) and \(\check t^*\), given in
	\eqref{eq:optimal_n*} and \eqref{eq:optimal_tau1}, respectively,
	satisfy the following estimates: there exist constants
	\(K_{\widehat\delta,1}>0\) and
	\(\varepsilon_{\widehat\delta,1}\in(0,1)\), where
	\(K_{\widehat\delta,1}\) depends on the parameters appearing in
	\eqref{eq:order_parameter} and the fixed confidence level
	\(\widehat\delta\), but does not depend on \(\widehat\varepsilon\),
	such that for every
	\(\widehat\varepsilon\in(0,\varepsilon_{\widehat\delta,1})\),
	\[
		n^*
		\leq
		K_{\widehat \delta,1}\log(\widehat\varepsilon^{-1})\quad \mbox{and}\quad \check t^*
		\leq K_{\widehat \delta,1} \big[\widehat\varepsilon^{-2}
			\log(\widehat\varepsilon^{-1})\big]^{1/w}.
	\]

	Moreover, by Lemma \ref{lem:time_check0} there exists some
	\(K_{\widehat\delta}>0\) (depending on $K_{\widehat\delta,1}$ but not on \(\widehat\varepsilon\)) such that
	\begin{align}\label{eq:est_rate0}
		T^*(\widehat\varepsilon,\widehat\delta)
		\leq
		K_{\widehat\delta}
		\big[
			\widehat\varepsilon^{-2}
			\log(\widehat\varepsilon^{-1})
			\big]^{1/w}\quad \mbox{for all \(\widehat\varepsilon\in(0,\varepsilon_{\widehat\delta,1})\)}.
	\end{align}

	Now, we let $C_{\widehat \delta}:= K_{\widehat\delta}^{{w}/{2}}$ and define for every $T\geq 2$ by
	\begin{align}\label{eq:def_ET}
		E_{\widehat \delta}(T)
		:=
		C_{\widehat\delta}
		\sqrt{\frac{\log T}{T^w}}.
	\end{align}
	Since the map $[e^{1/w},\infty):T\mapsto \sqrt{\frac{\log T}{T^w}}$ is decreasing, we can choose $T_{\widehat \delta, 1}\geq \max\{2,e^{1/w}\}$ such that
	\begin{align}\label{eq:est_rate1}
		E_{\widehat \delta}(T)\leq \varepsilon_{\widehat \delta, 1}\quad \mbox{for all $T\geq T_{\widehat \delta, 1}$.}
	\end{align}

	Moreover, since
	\(
	\log(E_{\widehat \delta}(T)^{-1})
	=
	\frac{w}{2}\log T-\frac12\log\log T-\log C_{\widehat\delta}\) for all $T>2$,
	there exists some  $T_{\widehat\delta,2}\geq2$ such~that
	\begin{align}\label{eq:est_rate2}
		\log(E_{\widehat \delta}(T)^{-1})
		\leq
		\log T\quad \mbox{for all
			$T\geq T_{\widehat\delta,2}$.}
	\end{align}

	Last, we let $T_{\widehat\delta}:= \max\{T_{\widehat\delta,1},T_{\widehat\delta,2}\}$. Then, we have by \eqref{eq:est_rate0}, \eqref{eq:est_rate1}, \eqref{eq:est_rate2} that for every~$T\geq T_{\widehat\delta}$,
	\begin{align}\label{eq:bound_T}
		\begin{aligned}
			T^*(E_{\widehat \delta}(T),\widehat\delta) & \leq K_{\widehat\delta}
			\big[
				E_{\widehat \delta}(T)^{-2}
				\log\big(E_{\widehat \delta}(T)^{-1}\big)
			\big]^{1/w}                                                          \\
			                                           & = K_{\widehat\delta}
			\bigg[C_{\widehat\delta}^{-2} \frac{T^w}{\log T}
				\log\big(E_{\widehat \delta}(T)^{-1}\big)
				\bigg]^{1/w}\leq
			K_{\widehat\delta}
			\left[
				C_{\widehat\delta}^{-2}
				T^w
				\right]^{1/w}
			=T,
		\end{aligned}
	\end{align}
	where the first equality follows from definition of $E_{\widehat \delta}(T)$ and the last equality follows from the setting that $C_{\widehat \delta}= K_{\widehat\delta}^{{w}/{2}}$.

	Then, for each \(T \geq T_{\widehat{\delta}}\), the estimate \eqref{eq:Step3_asynchro} (which follows from the Step 2 and Step 3 in the proof of Theorem \ref{thm:asynchro_rate}) can be applied with the prescribed error level
	\(\widehat{\varepsilon}:=E_{\widehat{\delta}}(T)\) (see \eqref{eq:def_ET})
	and confidence level \(\widehat{\delta}\), which leads to
	\[
		\widehat{\mathbb{P}}^0\Big(
		\|\check Q_{\widetilde T}-\check Q^*\|_{\check S\times \check \Pi}
		\leq E_{\widehat \delta}(T)
		\Big)
		>
		1-\widehat \delta
		\quad
		\mbox{for any $\widetilde T\geq T^*(E_{\widehat \delta}(T),\widehat\delta).$}
	\]
	Since \(T\geq T^*(E_{\widehat \delta}(T),\widehat\delta)\) for every $T\geq T_{\widehat \delta}$ by \eqref{eq:bound_T}, the desired estimate \eqref{eq:est_claim} follows.

	This completes the proof.
\end{proof}

\bibliographystyle{abbrv}
\bibliography{references}

\end{document}